\newcommand{\showdate}{false}
\newcommand{\ignore}[1]{}
\newcolumntype{C}[1]{>{\centering\let\newline\\\arraybackslash\hspace{0pt}}p{#1}}
\DeclareMathOperator{\im}{Im}
\DeclareMathOperator{\rk}{rk}
\newcommand{\ie}{\emph{i.e.} }
\newcommand{\eg}{\emph{e.g.} }
\newcommand{\cf}{\emph{cf.} }
\newcommand{\sign}{\sigma}
\newcommand{\kd}{\Sigma}
\newcommand{\hdg}{h}
\newcommand{\bl}{G}
\newcommand{\bm}{H}
\newcommand{\dcov}{p}
\newcommand{\sm}[1]{\left(\begin{smallmatrix} #1 \end{smallmatrix} \right)}
\newcommand{\cvec}[2]{\sm{#1 \\ #2}}
\newcommand{\rvec}[2]{\sm{\!#1 & #2\!}}
\newcommand{\fbb}{\mathcal{Z}}
\newcommand{\sff}{\mathcal{Y}}
\newcommand{\cala}{\mathcal{A}}
\newcommand{\calo}{\mathcal{O}}
\newcommand{\cali}{\mathcal{I}}
\newcommand{\call}{\mathcal{L}}
\DeclareMathOperator{\Amp}{Amp}
\newcommand{\yy}{\textrm{yes}}
\newcommand{\nn}{\textrm{no}}
\newcommand{\hk}{hyper-Kähler\xspace}
\DeclareMathAlphabet{\matheur}{U}{eur}{m}{n}
\newcommand{\hkr}{\matheur{r}}
\newcommand{\kclass}{\matheur{k}}
\DeclareMathOperator{\Pic}{Pic}
\newcommand{\PP}{\mathbb{P}}
\DeclareMathOperator{\Tor}{Tor}
\DeclareMathOperator{\gd}{gd}
\newcommand{\EK}{\mu}
\newcommand{\disc}{\Delta}
\newcommand{\cdisc}{\overline\Delta}
\newcommand{\cw}{W}
\newcommand{\iw}{\mathring W}
\newcommand{\mmod}{\!\!\mod}
\newcommand{\rbox}[2]{\makebox[\widthof{$#1$}-#2]{$#1$}}
\newcommand{\half}{{\textstyle\frac{1}{2}}}
\newcommand{\bbz}{\mathbb{Z}}
\newcommand{\Z}{\mathbb{Z}}
\newcommand{\C}{\mathbb{C}}
\newcommand{\bbq}{\mathbb{Q}}
\newcommand{\bbr}{\mathbb{R}}
\newcommand{\R}{\mathbb{R}}
\newcommand{\bbc}{\mathbb{C}}
\newcommand{\bbo}{\mathbb{O}}
\newcommand{\bbp}{\mathbb{P}}
\newcommand{\into}{\hookrightarrow}
\newcommand{\Num}[1]{\textup{Num}\left(#1\right)}
\newcommand{\gtstr}{$G_{2}$\nobreakdash-\hspace{0pt}structure}
\newcommand{\gtmfd}{$G_{2}$\nobreakdash-\hspace{0pt}manifold}
\newcommand{\dM}{\wh d}
\newcommand{\wt}[1]{\widetilde #1}
\newcommand{\gen}[1]{\langle#1\rangle}
\numberwithin{equation}{section}
\newtheorem{thm}{Theorem}[section]
\newtheorem{prop}[thm]{Proposition}
\newtheorem{lem}[thm]{Lemma}
\newtheorem{cor}[thm]{Corollary}
\theoremstyle{definition}
\newtheorem{defn}[thm]{Definition}
\newtheorem{constr}[thm]{Construction}
\theoremstyle{remark}
\newtheorem{rmk}[thm]{Remark}
\newtheorem*{rmk*}{Remark}
\setlist{leftmargin=*}
\newcommand{\wh}{\widehat}
\newcommand{\pc}{\wh p}
\newcommand{\spc}{$\textrm{spin}^\textrm{c}$\xspace}
\newcommand{\Spin}{\textup{Spin}}
\title{Exotic $G_2$-manifolds}
\author[D. Crowley]{Diarmuid Crowley}
\address{School of Mathematics and Statistics,
University of Melbourne,
Parkville, VIC, 3010, Australia}
\email{dcrowley@unimelb.edu.au}
\author[J. Nordström]{Johannes Nordström}
\address{Department of Mathematical Sciences,
University of Bath,
Bath BA2 7AY, UK}
\email{j.nordstrom@bath.ac.uk}
\begin{document}

\begin{abstract}
We exhibit the first examples of closed 7-dimensional Riemannian
manifolds with holonomy $G_2$ that are homeomorphic but not diffeomorphic.
These are also the first examples of closed Ricci-flat manifolds
that are homeomorphic but not diffeomorphic.
The examples are generated by applying the twisted connected sum construction
to Fano 3-folds of Picard rank 1 and 2.
The smooth structures are distinguished by the generalised Eells-Kuiper
invariant introduced by the authors in a previous paper.  %
\end{abstract}

\ifthenelse{\boolean{\showdate}}{\date{\today}}{}
\maketitle

\ifthenelse{\boolean{\showdate}}{\vspace{-0.8\baselineskip}}{}

\vspace{-5mm plus 10mm}

\section{Introduction}

Given a type of special geometric structure, it is often interesting to ask:
do there exist manifolds with such structures that are homeomorphic but not
diffeomorphic? In this paper we consider the case of Riemannian
metrics with holonomy $G_2$ on closed manifolds of dimension 7.
The Lie group $G_2$ can be described as the automorphism group of the octonion
algebra $\bbo$, and its natural action on $\im \bbo \cong \R^7$
appears as an exceptional case in Berger's classification of Riemannian
holonomy \cite{berger55}.
Metrics with holonomy $G_2$ are always Ricci-flat \cite{bonan66}.

A general strategy to address the question of the first paragraph
is to apply a 
smooth classification theorem
to a plentiful supply of examples for which the classifying invariants
are computable. In this paper we make use of the ``twisted connected sum''
construction of closed \gtmfd s introduced by Kovalev \cite{kovalev03};
it was shown in~\cite{g2m} that this
construction yields large numbers of closed \gtmfd s that are
\emph{2-connected} (\ie~the homotopy groups $\pi_1$ and $\pi_2$ are trivial)
with torsion-free cohomology, and how to compute the
invariants required to apply homeomorphism classification results of
Wilkens \cite{wilkens72}.

The diffeomorphism classification of 2-connected 7-manifolds was recently
completed in \cite{7class}, which in particular introduced a generalised
Eells-Kuiper invariant that distinguishes
all the different smooth
structures on the same closed 2-connected topological spin 7-manifold.
While this invariant can be difficult to compute for
interesting examples of manifolds, in the present paper we show how
to compute it for twisted connected sums, and use that to identify
examples of closed 2-connected manifolds with holonomy $G_2$ that are
homeomorphic but not diffeomorphic.

Using the diffeomorphism classification, the manifolds
can be described explicitly as follows.
Real vector bundles of rank 4 over $S^4$ are classified by their Euler class
$e$ and first Pontrjagin class $p_1$ in $H^4(S^4) \cong \Z$. Let $N$
and $\Sigma_{\rm Mi}$ 
be the total space of the unit sphere bundle in the vector bundle with
$(e,p_1) =(0, 16)$ and $(1,6)$, respectively.
Then $\Sigma_{\rm Mi}$ is an exotic 7-sphere; indeed
$\Sigma_{\rm Mi}$ and $S^7$ were among 
the first discovered 
examples of homeomorphic but non-diffeomorphic
manifolds (Milnor \cite{milnor56}).
Meanwhile for any $k \geq 1$ the
connected sum of $k$ copies of $N$ is a closed 2-connected 7-manifold
with $b_3(N^{\#k}) = k$ and torsion-free cohomology.
The manifolds $N^{\#k}$ and $N^{\#k}\#\Sigma_{\rm Mi}$ are homeomorphic
but not diffeomorphic (and in fact these are the only two diffeomorphism types
with that underlying homeomorphism type).

\begin{thm}
\label{thm:elementary}
For $k = 89$ and $101$, both $N^{\#k}$ and $N^{\#k} \# \Sigma_{\rm Mi}$ admit a Riemannian
metric with holonomy $G_2$.
\end{thm}

To the best of our knowledge, these are
also the first examples of closed Ricci-flat manifolds (of any dimension)
that are homeomorphic but not diffeomorphic.

Since the paper is primarily devoted to the topological analysis of a
particular class of examples of \gtmfd s, it makes practically no use of
results about $G_2$ geometry in general. Background on the definition of
$G_2$ and Riemannian holonomy can be found \eg in the books by Salamon
\cite{salamon89} or Joyce \cite{joyce00}. The main technical work of the paper
is to examine in detail the properties of some examples of Fano 3-folds and
their anticanonical divisors.

\subsection{Twisted connected sums}

There are two known sources of examples of closed \gtmfd s. The first examples
were constructed by Joyce in 1995 by desingularising quotients of
flat tori~\cite{joyce96-I}. In this paper we will make use of the later
twisted connected sum construction. While this can be used to produce a large
number of examples, it is still not known whether there exist infinitely many
different topological types of closed 7-manifolds that admit holonomy
$G_2$ metrics.

A \emph{Fano manifold} is a smooth projective variety with ample anticanonical
bundle, or in more differential-geometric terms, a closed complex manifold
whose first Chern class is a Kähler class. They have been studied extensively,
and in complex dimension 3 they have been classified by
Iskovskih \cite{iskov77, iskov78, iskov79} and
Mori-Mukai \cite{MM1, MM4}. %

Given a pair of Fano 3-folds $Y_+, Y_-$ with smooth anticanonical K3 divisors
$\Sigma_\pm \subset Y_\pm$ and a \emph{matching} diffeomorphism
$\hkr : \Sigma_+ \to \Sigma_-$ (Definition \ref{def:match}), the twisted
connected sum construction yields a closed simply-connected 7-manifold $M$ with
metrics of holonomy $G_2$. The procedure is summarised
in \S \ref{subsec:tcs_def} and  \S \ref{subsec:block_from_fano}.
Part of the usefulness of the twisted connected sum construction is that
many geometric and topological features of the resulting \gtmfd s can
be understood in terms of the relatively well-known algebraic input data.
On the other hand, the challenge is that a thorough understanding of
the algebraic data is required to find any matchings at all.

We categorise the matching as either \emph{perpendicular} or
\emph{non-perpendicular}, depending on the action of $\hkr$ on the images of
$H^2(Y_\pm)$ in $H^2(\Sigma_\pm)$ (Definition \ref{def:config}).
It is shown in \cite{g2m} that for most pairs $\sff_+, \sff_-$ among the 105
deformation types of Fano 3-folds, general deformation theory results make
it possible to find a perpendicular matching of some %
$Y_\pm \in \sff_\pm$ resulting in a 2-connected twisted connected sum \gtmfd.
As we explain below, such perpendicular matchings can never be homeomorphic
without being diffeomorphic.

Whether there is any non-perpendicular matching of a pair of members of
$\sff_\pm$ is in general a more difficult question, which has not previously been studied systematically. There are necessary
conditions of a lattice-arithmetical nature, but, as we discuss in
\S \ref{sec:match}, showing that matchings exist also requires some detailed
information about the deformation theory of anticanonical divisors
in~$\sff_\pm$, which needs to be worked out separately for each individual
deformation type of Fanos.

\subsection{The classifying invariants} 

Let us recall the relevant 
smooth classification results.
Given a closed 2-connected 7-manifold $M$, two obvious topological invariants
are its cohomology ring $H^*(M)$ and its spin characteristic class
$p_M \in H^4(M)$
(satisfying $p_1(M) = 2p_M$). If $H^4(M)$ is torsion-free, then this
data can be reduced to the third Betti number $b_3(M)$ and the greatest
integer $d(M)$ dividing $p_M$ in $H^4(M)$ (we set $d = 0$ if $p_M = 0$).
In fact, the pair $(b_3(M), d(M))$ classifies such $M$ up to homeomorphism
(by Wilkens \cite{wilkens72}, see also Theorem \ref{thm:2c7m}).

In \cite{7class}, we introduced the \emph{generalised Eells-Kuiper invariant}
of a closed spin 7-manifold $M$. If $H^4(M)$ is torsion-free then this
invariant reduces to a constant
\[ \EK(M) \in \Z/\dM , \]
and distinguishes between $\dM := \gcd\big(28, \Num{\frac{d}{4}}\big)$
different diffeomorphism classes of smooth structures on the topological
manifold underlying $M$ (where $\Num{\frac{a}{b}} := \frac{a}{\gcd(a,b)}$).
If $p_M = 0$ (so $\dM = 28$) then $\EK(M)$ coincides with the invariant
introduced by Eells and Kuiper \cite{eells62}, which in particular
distinguishes between the 28 classes of smooth structures on $S^7$.

In \S \ref{subsec:EK_def} we recall how $\EK(M)$ can be defined in terms of a
\spc coboundary of $M$. The challenge with this definition is that while the 
existence of a suitable coboundary is guaranteed, there is
no algorithm for finding one, especially not one with a simple enough
description that evaluating the formula \eqref{eq:ek_def} is tractable.
However, we are able to construct explicit \spc coboundaries of twisted
connected sums, and in \S \ref{sec:ek_tcs} we use those to express the
generalised Eells-Kuiper invariant of a twisted connected sum in terms of
data for the Fanos used and the matching.
In particular, it turns out that any perpendicularly matched
twisted connected sum has $\EK = 0$ (Corollary \ref{cor:perp0}).

\subsection{The main examples}

To have any chance of obtaining homeomorphic but non-diffeomorphic twisted
connected sums we must therefore search for non-perpendicular %
matchings. In that case, %
both Fanos used must have Picard rank $\geq 2$, \cf Remark
\ref{rmk:arith}. (The Picard group $\Pic Y$ of a Fano 3-fold $Y$
coincides with its integral second cohomology $H^2(Y)$, so the Picard rank
simply means its second Betti number $b_2(Y)$.)
We are therefore led to study systematically the possible
matchings of Fanos with Picard rank 2, and in this setting we can obtain
decisive results.

\pagebreak[3]
\begin{thm}\hfill
\label{thm:main}
\begin{enumerate}
\item Any twisted connected sum $M$ of Fano 3-folds of Picard rank 1 or 2 has
$H^4(M)$ torsion-free.
\item
There are precisely six (unordered) pairs $\sff_+, \sff_-$ of deformation
types of Fano 3-folds of Picard rank 2 with members that can be matched
in the sense of Definition \ref{def:match} in such a way that the resulting
twisted connected sum $M$ has $\EK(M) \not= 0$.
\item
Each of those six pairs gives rise to a single diffeomorphism type of $M$
with $\EK(M) \not= 0$; those $M$ are all 2-connected.
\item
In total, they realise four distinct diffeomorphism types of $M$ with
$\EK(M) \not= 0$.
\item
Precisely two of those are homeomorphic to some twisted connected sum $M'$ of
Fano 3-folds of Picard rank $\leq 2$ such that $\EK(M') = 0$.
\end{enumerate}
In particular, we obtain two pairs $(M,M')$ of manifolds that are homeomorphic
but not diffeomorphic and both admit metrics with holonomy $G_2$.
These are the manifolds identified in Theorem \ref{thm:elementary}.
\end{thm}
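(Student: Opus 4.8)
The plan is to combine the arithmetic classification of twisted connected sums with the diffeomorphism classification of \cite{7class}. Recall that a twisted connected sum $M$ of Fanos $Y_+, Y_-$ with a matching $\hkr$ (Definition \ref{def:fanomatch}) is determined, as far as its diffeomorphism type is concerned, by the \emph{matching configuration}: the K3 lattice $L$ together with the primitive sublattices $N_\pm \into L$ that are the images of $H^2(Y_\pm;\Z) \to H^2(\Sigma_\pm;\Z)$, each containing the distinguished polarising class. The cohomology of $M$ is computed from the Mayer--Vietoris sequence of \cite{g2m}, which in particular expresses the torsion of $H^4(M)$ through certain subquotients of $N_+, N_-$ and $L$; when $M$ is 2-connected with torsion-free cohomology, Theorem \ref{thm:2c7m} and \cite{7class} reduce its homeomorphism type to $(b_3(M), d(M))$ and its diffeomorphism type to $(b_3(M), d(M), \EK(M))$. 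For part (i), one observes that when $Y_\pm$ has Picard rank 1 or 2 the lattice $N_\pm$ has rank at most 2 and contains the primitive polarising class, so only finitely many configurations can occur; a direct check over these shows that the relevant subquotients, hence $H^4(M)$, are torsion-free.

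The heart of the argument is parts (ii)--(iv). Using the Mori--Mukai classification of the 36 deformation types of Fano 3-folds of Picard rank 2, I would examine the unordered pairs $(\sff_+, \sff_-)$ one by one. For each pair: (a) from the geometry of a generic $Y_\pm \in \sff_\pm$ and its anticanonical K3 divisor, compute the data feeding the formulas for $b_3$, $d$ and $\EK$ --- the lattice $N_\pm$, its embedding into $L$, and the ample cone of the generic anticanonical divisor; (b) decide whether the members admit a matching, and in particular a non-perpendicular one. Whether a non-perpendicular matching is \emph{arithmetically} possible is a lattice-theoretic question about fitting $N_+$ and $N_-$ into $L$ non-orthogonally (\cf Remark \ref{rmk:arith}); realising an admissible configuration \emph{geometrically}, by a \hk rotation, requires in addition knowing that each $Y_\pm$ can be deformed so that its anticanonical K3 acquires the prescribed period and polarisation, which is the delicate input developed in \S\ref{subsec:match}. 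By Corollary \ref{cor:perp0}, any pair admitting only perpendicular matchings contributes $\EK = 0$ and is discarded; for the remaining pairs one substitutes the configuration into the $\EK$-formula of \S\ref{sec:ek_tcs} and retains those with some matching giving $\EK \neq 0$. Exactly six unordered pairs survive, which proves part (ii). For each of the six I would verify 2-connectedness using the configuration criterion of \cite{g2m} and check that all of its matchings with $\EK \neq 0$ yield the same triple $(b_3, d, \EK)$, which proves part (iii); tabulating the six triples and noting that exactly two coincidences occur yields the four diffeomorphism types of (iv).

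For part (v), for each of the four diffeomorphism types found, with homeomorphism invariant $(b_3, d)$, I would search the much larger family of twisted connected sums of Fanos of Picard rank $\leq 2$ --- now allowing Picard rank 1 and perpendicular matchings --- for one, $M'$, that is 2-connected with torsion-free cohomology, has the same $(b_3, d)$, and has $\EK(M') = 0$ (automatic for a perpendicular matching by Corollary \ref{cor:perp0}). Precisely two of the four values of $(b_3, d)$ arise in this way. For those two, $M$ and the corresponding $M'$ are homeomorphic by Theorem \ref{thm:2c7m}, being 2-connected spin 7-manifolds with torsion-free cohomology and equal $(b_3, d)$, yet they are not diffeomorphic because $\EK(M) \neq 0 = \EK(M')$; this is the final assertion.

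The principal obstacle throughout is step (b): for a given Picard-rank-2 pair the arithmetic test for a non-perpendicular matching is mechanical, but proving that an arithmetically admissible configuration is actually realised --- simultaneously deforming $Y_+$ and $Y_-$ so that their anticanonical K3 divisors acquire compatibly-marked Hodge structures respecting the ample cones --- is case-by-case and occupies most of the work in \S\ref{subsec:match}. Once that is in hand, parts (i), (iii), (iv) and (v) reduce to bookkeeping with the $b_3$-, $d$- and $\EK$-formulas and with the homeomorphism and diffeomorphism classifications.
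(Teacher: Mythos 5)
Your proposal follows essentially the same route as the paper: reduce the diffeomorphism classification to $(b_3,d,\EK)$ via Theorem \ref{thm:2c7m}, use Corollary \ref{cor:perp0} together with the arithmetic constraints of Remark \ref{rmk:arith} and Lemma \ref{lem:handarith} to cut the non-perpendicular candidates down to pairs among \#9, \#17, \#27, realise those configurations by case-by-case genericity ("handcrafting") of the anticanonical K3 divisors, and finally compare $(b_3,d)$ against the table of perpendicular matchings for part (v). The only loose point is part (i), where the claim of "finitely many configurations" should be replaced by the paper's actual argument that $\Tor H^4(M)$ is $(\Z/k)^2$ as in \eqref{eq:h4tor} with $k$ forced to equal $1$ by the discriminant bounds on rank $\leq 2$ Picard lattices, but this is the same bookkeeping in spirit.
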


For each of the examples highlighted in Theorem \ref{thm:main}, we list 
in Table \ref{table:mainex} the pair of deformation types used, whether the
matching is perpendicular, and the classifying diffeomorphism invariants of the
resulting twisted connected sum.
Recall that a del Pezzo 3-fold is a Fano 3-fold $Y$ whose
anticanonical class $-K_Y \in \Pic Y$ is even.

\vspace{0mm plus 2mm}

\hspace{-5.2cm}
\begin{tabular}{p{1cm}p{9cm}}
\vspace{-3mm}
\[
\refstepcounter{table}
\label{table:mainex}
\begin{array}{c}
\begin{array}{cccccc}
\toprule
\; \sff_+     & \;\sff_-      & \perp & b_3 &  d & \EK \\
\midrule
\ref{it:dp3}  & \ref{it:dp5}  & \yy   & 101 &  8 & 0 \\
\ref{it:mm27} & \ref{it:mm27} & \nn   & 101 &  8 & 1 \\
\ref{it:dp5}  & \ref{it:d16}  & \yy   &  89 &  8 & 0 \\
\ref{it:mm9}  & \ref{it:mm27} & \nn   &  89 &  8 & 1 \\
\ref{it:mm17} & \ref{it:mm27} & \nn   &  89 &  8 & 1 \\
\ref{it:mm9}  & \ref{it:mm9}  & \nn   &  77 &  8 & 1 \\
\ref{it:mm17} & \ref{it:mm9}  & \nn   &  77 &  8 & 1 \\
\ref{it:mm17} & \ref{it:mm17} & \nn   &  77 & 24 & 1 \\
\bottomrule 
\end{array} \\[20.5mm] \textsc{Table \thetable} \end{array} \]
&

\begin{enumerate}[label=\textup{(\alph*)}]
\item \label{it:dp3}
Del Pezzo 3-folds of degree 3, \ie cubic hypersurfaces in $\PP^4$
(Picard rank 1).
\item \label{it:dp5}
Del Pezzo 3-folds of degree 5 (Picard rank 1).
\item \label{it:d16}
Picard rank 1 Fanos of degree 16.
\item \label{it:mm9}
Number 9 in the Mori-Mukai list of Picard rank 2 Fano 3-folds:
$\PP^3$ blown up in a curve of degree 7 and genus~5.
\item \label{it:mm17}
Number 17 in the Mori-Mukai list of Picard rank 2 Fano 3-folds: a smooth
quadric hypersurface in $\PP^4$ blown up in an elliptic curve of degree 5.
\item \label{it:mm27}
Number 27 in the Mori-Mukai list of Picard rank 2 Fano 3-folds:
$\PP^3$ blown up in a twisted cubic curve.
\end{enumerate}
\end{tabular}

\vspace{0mm plus 2mm}
As seen in Table \ref{table:mainex}, the two pairs that are homeomorphic but
not diffeomorphic have ${d = 8}$ and $b_3 = 89$ or $101$,
coinciding with the invariants of the manifolds in
Theorem \ref{thm:elementary} (see \mbox{\cite[Example~5.3]{7class}}).

In \S \ref{sec:blocks} we %
compute detailed topological data
for all 36 types in the Mori-Mukai list of rank 2 Fano 3-folds.
In \S \ref{sec:handcraft} we identify all pairs that satisfy the necessary
arithmetic conditions for existence of a non-perpendicular matching resulting
in a twisted connected sum with $\EK \not= 0$. The only candidate pairs are
among the types \ref{it:mm9}, \ref{it:mm17} and \ref{it:mm27} above.

The key difficulty in finding non-perpendicular matchings is to understand
precisely which K3 surfaces $\kd$ appear as anticanonical divisors in a given
type of Fanos, identifying conditions in terms of the Picard lattice of $\kd$
(\ie $\Pic \kd = H^2(\kd; \Z) \cap H^{1,1}(\kd; \C)$ equipped with the
intersection form).
Having at least reduced our list of candidates, we carry out this intricate
work only for the types \ref{it:mm9}, \ref{it:mm17} and \ref{it:mm27}.
We find in Theorem \ref{thm:nonzero} that non-perpendicular matchings do in
fact exist for each of the six pairs of those types, leading to the examples
with $\EK \not= 0$ above.

We then compare the homeomorphism invariants $(b_3, d)$ of the
realised manifolds with the invariants realised by perpendicular matchings of
the 1378 pairs of rank 1 and 2 Fanos, listed in Table~\ref{table:perp}
of \S\ref{sec:orth}. For two
of the four twisted connected sums with $\EK \not= 0$ we can identify some
perpendicular matching with the same homeomorphism invariants, and two of those
are included in the table above.

In another application of the classification results, we can also exhibit
examples of holonomy $G_2$ metrics on total spaces of smooth $S^1$-bundles,
while the last entry of Table \ref{table:mainex} is an example of a \gtmfd{}
that is a topological $S^1$-bundle but not a smooth one
(Remark \ref{rmk:circle_bundles}).
Further, as a byproduct of our analysis in \S\ref{sec:orth} we identify all
pairs of rank 2 Fanos that can be matched to define twisted connected sums $M$
with $H^2(M) \cong \Z$ (Table \ref{table:orth}).
Such matchings are of interest for the problem of constructing
examples of $G_2$-instantons %
by gluing, \cf Sá Earp and Walpuski \cite{saearp13}, Walpuski \cite{walpuski16}
and Menet, the second author and Sá~Earp~\cite{tcsinst}. 

\subsection{Context}

In dimension 7, the problem of finding special metrics on manifolds that are
homeomorphic but not diffeomorphic has been considered for instance in the
case of Riemannian metrics with positive sectional curvature
(Kreck-Stolz \cite{kreck91}) and
3-Sasakian metrics (Chinburg-Escher-Ziller \cite{chinburg07}). In both cases,
the examples exhibited have finite $H^4$, and the smooth structures can be
distinguished by the ordinary Eells-Kuiper invariant.

All known (irreducible) examples of closed Ricci-flat manifolds have special
holonomy: they are $2n$-manifolds with
holonomy $SU(n)$ ($n \geq 2$), $4n$-manifolds with holonomy
$Sp(n)$ ($n \geq 2$), 7-manifolds with holonomy $G_2$ and
8-manifolds with holonomy $\Spin(7)$.

In real dimension 4, the only smooth manifold with holonomy $SU(2)$ is the K3
surface. In real dimension 6, simply-connected manifolds have unique smooth
structures by Zhubr \cite[Theorem~6.3]{zhubr2000}, 
while there is no general classification result for finite but
non-trivial fundamental group.

Manifolds with holonomy $SU(n)$ or $Sp(n)$ necessarily have $b_2 \geq 1$,
and in real dimension $\geq 8$ the case $b_2 > 1$ is out of reach of current
smooth classification results.
Complete intersections with $c_1 = 0$ provide examples with holonomy $SU(n)$ and $b_2 = 1$, 
many of which have been smoothly classified by Traving \cite{traving85} 
(see also \cite[Theorem~A]{kreck99});
however Traving's results imply that homeomorphic complete intersections with $c_1 = 0$
are diffeomorphic in real dimension $\geq 6$.

Joyce \cite[Theorem 15.4.3, 15.5.2, 15.5.6,
15.6.2 \& 15.7.2]{joyce00} provides five examples of $8$-manifolds of holonomy
$\Spin(7)$ with $b_2 = 0$, and four of these also have $b_3 = 0$.
Computing the torsion in the cohomology of these examples is intricate,
and even if they prove to be 3-connected, then the underlying topological
manifold admits two diffeomorphism classes of smooth structure if and only if 8 
divides the first Pontrjagin class (if 8 does not divide the first Pontrjagin class, then the smooth 
structure is unique up to diffeomorphism) \cite{crowley19}.
However, there is not currently a sufficiently
large supply of such examples to be hopeful of finding exotic pairs of $3$-connected $\Spin(7)$
$8$-manifolds.

Thus 7-manifolds of holonomy $G_2$ are the only kinds of closed Ricci-flat
manifolds where homeomorphic non-diffeomorphic
examples are accessible with the current technology.

In the context of non-Ricci-flat holonomy groups, an early application
of Donaldson invariants was to give examples of closed manifolds with holonomy $U(2)$,
\ie Kähler manifolds of complex dimension~2, that are homeomorphic but
not diffeomorphic \cite{donaldson85}.

\subsection*{Acknowledgements} We thank Alastair Craw, Alessio Corti,
Mark Haskins, Jesus Martinez-Garcia, Matthew Turner and Dominic Wallis for
useful discussions, and the referee for constructive comments.
DC acknowledges the support of the Leibniz Prize of Wolfgang L\"{u}ck, granted
by the Deutsche Forschungsgemeinschaft.
JN thanks the Simons Foundation for its support under the Simons Collaboration
on Special Holonomy in Geometry, Analysis and Physics
(grant \#488631, Johannes Nordström).

\section{Background}

We begin with %
further
explanations of the generalised Eells-Kuiper
invariant and the twisted connected sum construction, in order to 
elucidate
the meaning of the %
main theorem, Theorem~\ref{thm:main}.

\subsection{Spin and spin$^\textrm{c}$ characteristic classes}

Recall that $B\Spin$,
the classifying space for stable spin vector bundles,
is \mbox{$3$-connected} with $\pi_4(B\Spin) \cong \Z$.
It follows that $H^4(B\Spin) \cong \Z$ is infinite cyclic.  A generator is
denoted $\pm \frac{p_1}{2}$ and the notation is justified since for the
canonical map $\pi \colon B\Spin \to BSO$ we have $\pi^*p_1 = 2 \frac{p_1}{2}$,
where $p_1$ is the first Pontrjagin class.
Given a spin manifold $X$ we write
\[ p_X : = \frac{p_1}{2}(TX) \in H^4(X) .\]
A \spc structure on a vector bundle $E$ has an associated complex
line bundle $L$ such that $E \oplus L$ is spin. Given a \spc structure on the
tangent bundle of $X$ we can therefore define characteristic classes
$z := c_1(L) \in H^2(X)$, and
$\pc_X := \frac{p_1}{2}(TX \oplus L) \in H^4(X)$.
Then $2\pc_X = p_1(X) + z^2$.
(If $X$ is spin then the induced \spc structure of course has $L$ trivial,
and $\pc_X = p_X$.)

\enlargethispage{0.7\baselineskip}

\begin{lem}[{\cf \cite[2.39--2.40]{7class}}]\hfill
\label{lem:px}
\begin{enumerate}
\item \label{it:acx} If $TX$ has an almost complex structure then
$\pc_X = -c_2(X) + c_1(X)^2$.
\item $\pc_X = w_4(X) + w_2(X)^2 \mmod 2$.
\item \label{it:wu}
Suppose $X$ is closed.
\begin{itemize}
\item If $\dim X \leq 7$ then $\pc_X$ is even.
\item If $\dim X = 8$ then $\pc_X$ is characteristic for the intersection form
of $X$, \ie $x^2 = x \cup \pc_X \mmod 2$ for any $x \in H^4(X)$.
\end{itemize}
\end{enumerate}
\end{lem}

\subsection{The generalised Eells-Kuiper invariant}
\label{subsec:EK_def}

We now describe the %
invariant $\EK(M)$ of a closed spin 7-manifold $M$, in the special case when
$H^4(M)$ is torsion-free. As in the introduction, let $d$ be the greatest
integer dividing $p_M$ (so $d$ is even by Lemma \ref{lem:px}\ref{it:wu}),
and $\dM := \gcd\big(28, \Num{\frac{d}{4}}\big)$.

Let $\cw$ be a \spc 8-manifold with $\partial \cw = M$.
To use $\cw$ to compute the generalised Eells-Kuiper invariant $\EK(M)$ we need
an element $n \in H^4(\cw)$ such that the image of $d n$ in $H^4(M)$ equals
$p_M$ \mbox{\cite[\S 2.7]{7class}}.
Then
\begin{equation}
\label{eq:ek_def}
\EK(M) :=
\frac{\wh \alpha^2 - \sign(\cw)}{8}
- \frac{5\bar z^2 \pc_\cw}{12} + \frac{z^4}{4} \in \Z/\dM,
\end{equation}
where $\wh \alpha := \pc_\cw - d n \in H^4(W)$ for $n$ as above, $\sign(W)$
denotes the signature of the intersection form of $W$, and
$\bar z \in H^2(\cw,M)$ is any pre-image of $z$ ($\bar z^2 \in H^4(\cw,M)$
is independent of the choice of $\bar z$). The integrals of $\wh \alpha^2$
and $z^4$ make sense since they are squares of classes that vanish on the
boundary. That $\EK(M)$ is independent of the choice of $\cw$ is a consequence
of the index formula for the Dirac operator on a closed \spc 8-manifold.

When $M$ is in addition 2-connected, there are precisely $\dM$ different smooth
structures on the topological manifold underlying $M$, and they are
distinguished by $\EK(M)$.

\begin{thm}[\cf {\cite[Theorems 1.2 \& 1.3]{7class}}]
\label{thm:2c7m}
Let $M_0$ and $M_1$ be closed 2-connected 7-manifolds such that $H^4(M_i)$
are torsion-free. Then
\begin{enumerate}
\item $M_0$ and $M_1$ are homeomorphic if and only if $b_3(M_0) = b_3(M_1)$
and $d(M_0) = d(M_1)$.
\item $M_0$ and $M_1$ are diffeomorphic if and only if in addition
$\EK(M_0) = \EK(M_1)$.
\end{enumerate}
\end{thm}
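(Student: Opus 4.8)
The statement to be proved is Theorem~\ref{thm:2c7m}, which is attributed to \cite[Theorem 1.3]{7class}; so the plan is to explain how the classification results recalled in the excerpt assemble into the two stated equivalences, treating the deep input from \cite{7class} and \cite{wilkens72} as a black box and doing the bookkeeping needed to see that the invariants $(b_3, d, \EK)$ are exactly the right ones in the torsion-free case.

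First I would fix the normal form for the topology. Since $M$ is $2$-connected, $H^1 = H^2 = 0$, and $H^3 \cong \Z^{b_3}$ by the assumption that $H^4$ (hence, by Poincaré duality and universal coefficients, also $H^3$) is torsion-free; thus the cohomology ring is determined by $b_3(M)$ together with the triple cup product form on $H^3$, which by Poincaré duality is a nondegenerate skew form and hence has a standard symplectic basis. So $H^*(M)$ as a ring is determined by $b_3(M)$ alone. The remaining primary topological datum is the spin characteristic class $p_M \in H^4(M) \cong \Z$, which is recorded by the integer $d = d(M)$ (even, by Lemma~\ref{lem:px}\ref{it:wu}). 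This is the content of the reduction sketched in the introduction, and it reduces part~(i) to Wilkens' classification \cite{wilkens72} of $2$-connected $7$-manifolds with torsion-free $H^4$ up to homeomorphism (equivalently PL-homeomorphism, by smoothing theory in this dimension) by the pair $(b_3, d)$: two such manifolds are homeomorphic iff these invariants agree. The one subtlety to flag is that Wilkens' original statement has a small indeterminacy (he classifies up to connected sum with a homotopy sphere, or equivalently leaves the smooth structure ambiguous), so part~(i) is precisely the ``homeomorphism, forget smooth structure'' shadow of his result; this is exactly where \cite{7class} sharpens the picture.

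For part~(ii) the plan is: given that $M_0, M_1$ are homeomorphic, so $b_3$ and $d$ agree, one must decide when they are diffeomorphic. By the homeomorphism one may identify the underlying topological manifold, and the set of smooth structures on it is a torsor; the work of \cite{7class} identifies this set (in the $2$-connected, torsion-free, spin case) with $\Z/\dM$ where $\dM = \gcd(28, \Num{d/4})$, detected by the generalised Eells-Kuiper invariant $\EK$. Concretely: (a) the group $\Theta_7 \cong \Z/28$ of homotopy $7$-spheres acts on smooth structures via connected sum, and $\EK$ on $\Sph^7 \# M$ changes by the Eells--Kuiper invariant of the homotopy sphere, which surjects onto $\Z/28$; (b) when $d \neq 0$ not all $28$ of these are distinct as smooth manifolds — the stabiliser is computed in \cite{7class} to have order $28/\dM$, essentially because a diffeomorphism is allowed to act nontrivially on $H^4$ (multiplying a generator by a unit in a way constrained by $d$ and by the index/$\ahat$ obstruction), and tracing through formula \eqref{eq:ek_def} shows $\EK$ well-defined in $\Z/\dM$ is a complete invariant of the residual classes; (c) conversely two smooth structures with the same $\EK \in \Z/\dM$ differ by a homotopy sphere in the stabiliser, hence are diffeomorphic. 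Thus $M_0 \cong_{\mathrm{diff}} M_1$ iff in addition $\EK(M_0) = \EK(M_1)$.

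The main obstacle — the part that genuinely requires \cite{7class} rather than classical surgery — is the precise determination of $\dM$ and the proof that $\EK$ is both well-defined modulo $\dM$ (independent of the \spc coboundary $\cw$ and of the auxiliary class $n$) and \emph{complete} on the residual set of smooth structures. Well-definedness rests on the Atiyah--Singer index theorem for the Dirac operator twisted by the line bundle on a closed \spc $8$-manifold (integrality of $\ahat$ of the relevant characteristic combination), which forces the bracketed expression in \eqref{eq:ek_def} to be integral and to change by a multiple of $\dM$ under change of coboundary; completeness requires knowing the image of the Eells--Kuiper homomorphism on the inertia group of $M$, i.e.\ which homotopy spheres become standard after connected sum with $M$ — and this is exactly where the divisibility $d$ of $p_M$ enters through $\Num{d/4}$. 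Since all of this is established in \cite{7class}, here I would simply cite it; the present paper's contribution is not to reprove Theorem~\ref{thm:2c7m} but to compute $b_3$, $d$ and $\EK$ for twisted connected sums, which is taken up in \S\ref{sec:ek_tcs}.
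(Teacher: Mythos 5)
The paper offers no proof of Theorem~\ref{thm:2c7m}: it is imported verbatim from \cite[Theorem 1.3]{7class} (with part (i) essentially due to Wilkens \cite{wilkens72}), and your proposal likewise defers all the substantive content — well-definedness of $\EK$, the count $\dM=\gcd(28,\Num{\frac d4})$ of smooth structures, and completeness of the invariants — to those references, so in that respect it matches the paper exactly. One aside in your first paragraph is mathematically wrong, though harmless to the conclusion: on a 2-connected 7-manifold there is no ``triple cup product form on $H^3$'' to put in symplectic normal form, since $H^3\cup H^3$ lands in $H^6(M)\cong H_1(M)=0$; the cohomology ring is determined by $b_3$ simply because the only nontrivial product among positive-degree classes is the unimodular Poincaré duality pairing $H^3\otimes H^4\to H^7$. (Also note that $H^3$ is automatically torsion-free from $H_2=0$; the torsion-freeness hypothesis is genuinely about $H^4$.) With that correction, your outline of how $(b_3,d)$ gives the homeomorphism classification and how $\EK\in\Z/\dM$ detects the residual smooth structures is a reasonable gloss on the cited results, which is all the paper itself does.
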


\subsection{Definition of twisted connected sums}
\label{subsec:tcs_def}

We now explain the construction of the twisted connected sum of a matching
pair of building blocks, using the set-up from \cite[\S 3]{g2m}.
Like in the original application of the construction by
Kovalev \cite{kovalev03}, the present paper uses building blocks 
obtained from Fano 3-folds (in our case of rank 1 or 2) by a procedure
explained in \S \ref{subsec:block_from_fano}. The matching problem is
elaborated on in \S \ref{sec:match}.

\begin{defn}
Let $Z$ be a non-singular algebraic 3-fold and $\Sigma \subset Z$ a
non-singular K3 surface. Let $N$ be the image of $H^2(Z) \to H^2(\kd)$.
We call $(Z,\kd)$ a \emph{building block} if %
\begin{enumerate}[leftmargin=*]
\item the class in $H^2(Z)$ of the anticanonical line bundle $-K_Z$
is indivisible;
\item
$\kd\in |{-}K_Z|$ (\ie $\Sigma$ is an anticanonical divisor),
and there is a  
projective morphism $f\colon Z\to \PP^1$ with $\kd=f^\star (\infty)$;
\item the inclusion $N\hookrightarrow H^2(\kd)$ is primitive, that is,
  $H^2(\kd)/N$ is torsion-free;
\label{N:primitive}
\item the group $H^3(Z)$---and thus also $H^4(Z)$---is torsion-free.
\end{enumerate}
We call $N$, equipped with the restriction of the intersection form on
$H^2(\kd)$, the \emph{polarising lattice} of the block. (Because $H^{2,0}(Z)$
is automatically trivial, $N \subseteq H^{1,1}(\kd)$ \cite[Lemma 3.6]{g2m},
so that $\kd$ is `$N$-polarised'.)
\end{defn}

\begin{rmk*}
The class $[\kd] = -K_Z \in H^2(Z)$ is always in the kernel
of $H^2(Z) \to H^2(\kd)$.
In this paper we will only consider blocks where $[\kd]$ in fact generates
the kernel.
\end{rmk*}

\begin{defn}
\label{def:match}
Let $Z_\pm$ be complex 3-folds, $\kd_\pm \subset Z_\pm$ smooth anticanonical
K3 divisors and $\kclass_\pm \in H^2(Z_\pm; \bbr)$ Kähler classes.
Let $\Pi_\pm \subset H^2(\kd_\pm;\bbr)$ denote the type $(2,0)+(0,2)$ part.
We call a diffeomorphism $\hkr \colon {\kd_+ \to \kd_-}$ a \emph{matching}
of $(Z_+, \kd_+, \kclass_+)$ and $(Z_-, \kd_-, \kclass_-)$ if
$\hkr^* \kclass_- \in \Pi_+$ and $(\hkr^{-1})^* \kclass_+ \in \Pi_-$,
while $\hkr^* \Pi_- \cap \Pi_+$ is non-trivial.

We also say that $\hkr : \kd_+ \to \kd_-$ is a matching of
$Z_+$ and $Z_-$ if there are Kähler classes $\kclass_\pm$ so that the above
holds.
\end{defn}

Given a building block $(Z, \kd)$, let $\disc \subset \PP^1$ be an open disc
that is a trivialising
neighbourhood of~$\infty$ for the fibration $f : Z \to \PP^1$, and
$U := f^{-1}(\disc)$. Then $V := Z \setminus U$ is a manifold with boundary
diffeomorphic to $S^1 \times \kd$.

\enlargethispage{0.2\baselineskip}

\begin{constr}[Twisted connected sum]
\label{constr:tcs}
Given a pair of building blocks $(Z_\pm, \kd_\pm)$ with a matching $\hkr$,
their \emph{twisted connected sum} $M$ is the smooth 7-manifold defined by
gluing $S^1 \times V_+$ and $S^1 \times V_-$ by the diffeomorphism
\begin{equation}
\label{eq:gluemap}
\begin{aligned}
S^1 \times S^1 \times \kd_+ & \to S^1 \times S^1 \times \kd_- , \\
(u, v, x) &\mapsto (v, u, \hkr(x))
\end{aligned}
\end{equation}
of their boundaries.
\end{constr}

\pagebreak[2]

\begin{thm}
\label{thm:g2glue}
The twisted connected sum $M$ admits metrics with holonomy $G_2$.
\end{thm}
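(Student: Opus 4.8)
The plan is to establish Theorem~\ref{thm:g2glue} by invoking Kovalev's gluing theorem~\cite{kovalev03}, extended and corrected as in~\cite{g2m}, rather than re-deriving the analysis from scratch. The essential point is that the matching condition of Definition~\ref{def:match} is precisely engineered to produce the geometric input that the gluing theorem requires. Concretely, from each building block $(Z_\pm, \kd_\pm)$ one produces an \emph{asymptotically cylindrical} Calabi--Yau 3-fold: by the non-compact version of the Calabi conjecture (Tian--Yau, in the form used for blocks in~\cite{kovalev03,g2m}), the open manifold $Z_\pm \setminus \kd_\pm$ carries a complete Ricci-flat Kähler metric with holomorphic volume form, asymptotic along the cylindrical end to $\bbr^+ \times S^1 \times \kd_\pm$, where the cross-section geometry is determined by a Ricci-flat (hence \hk) metric on $\kd_\pm$ in the Kähler class of $\omega^I_\pm$. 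Taking the product with $S^1$ gives an asymptotically cylindrical $G_2$-manifold $S^1 \times V_\pm$ (after identifying $V_\pm$ with the relevant truncation), whose asymptotic cross-section is $T^2 \times \kd_\pm$ equipped with the \hk structure $(\omega^I_\pm, \omega^J_\pm, \omega^K_\pm)$.

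The next step is to check that the gluing map~\eqref{eq:gluemap} is compatible with these asymptotic $G_2$-structures in the sense required for gluing: the map must interchange the two circle directions and act on the K3 factor by a \hk rotation, which is exactly the content of Definition~\ref{def:hk} and Definition~\ref{def:match}. The role-swap $(u,v)\mapsto(v,u)$ together with $\hkr^*\omega^I_- = \omega^J_+$, $\hkr^*\omega^J_- = \omega^I_+$, $\hkr^*\omega^K_- = -\omega^K_+$ is precisely the "twisting" that matches the torsion-free $G_2$-structures on the two cylindrical ends (up to the exponentially small error coming from the difference between the exact asymptotically cylindrical metric and the genuine cylinder). One then forms a one-parameter family of closed 7-manifolds $M_T$ by gluing truncations at neck length $T$, carrying $G_2$-structures $\varphi_T$ whose torsion $d\varphi_T, d{*}\varphi_T$ is $O(e^{-\lambda T})$ in suitable weighted norms.

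The final step is the analytic perturbation: for $T$ large, $\varphi_T$ can be deformed to a genuinely torsion-free $G_2$-structure. This is where the real work lies, but it is entirely contained in~\cite{kovalev03} (with the technical fixes in~\cite{g2m}): one uses Joyce's existence theorem for torsion-free $G_2$-structures on compact manifolds, whose hypotheses require a uniform bound on the inverse of a certain linearised operator (the relevant Laplacian on the neck-stretched manifold) together with smallness of the initial torsion. The inverse bound follows from the fact that the limiting cylindrical manifolds have no $L^2$ harmonic forms obstructing the relevant cohomology, combined with a Mayer--Vietoris / gluing argument for the $b_1 = 0$ condition; the smallness of torsion is the exponential decay above. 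The main obstacle is thus the uniform invertibility estimate on the stretched neck: one must ensure the small eigenvalues of the Laplacian on $M_T$ are bounded away from zero as $T\to\infty$, which is where the precise topological hypotheses on the building blocks (indivisibility of $-K_Z$, triviality of the relevant torsion groups, and the matching condition ensuring the cohomological pieces glue correctly) enter. Since all of this is carried out in the cited references, I would simply state that Theorem~\ref{thm:g2glue} follows from~\cite[Theorem~3.12]{g2m} (or the corresponding statement), applied to the building blocks and matching at hand, and refer the reader there for the details of the analysis.
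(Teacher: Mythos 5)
Your proposal is correct and follows essentially the same route as the paper: produce asymptotically cylindrical Calabi--Yau structures on the blocks with asymptotic limit prescribed to be the \hk triple of the matching, observe that the \hk rotation condition makes the twisted gluing map compatible with the asymptotic torsion-free $G_2$-structures on $S^1 \times V_\pm$, and then invoke Kovalev's gluing/perturbation theorem. The only difference is bookkeeping of references: the paper cites \cite[Theorem D]{hhn} for the ACyl Calabi--Yau metrics with prescribed limit (rather than Tian--Yau directly) and delegates the entire neck-stretching and perturbation analysis to \cite[Theorem 5.34]{kovalev03}, which you sketch explicitly.
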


\begin{proof}[{Proof sketch \textup{(\cf \cite[Corollary 6.4]{g2m})}}.]
By \cite[Theorem D]{hhn}, the interiors of the manifolds $V_\pm$ admit metrics
with holonomy $SU(3)$ that are asymptotically cylindrical, \ie in a collar
neighbourhood $\cong \bbr \times S^1 \times \kd_\pm$ of the boundary they are
close to a product cylinder metric.
Then $V_\pm$ admit parallel $SU(3)$-structures, defining torsion-free
product \gtstr s on $S^1 \times V_\pm$.
The definition of what it means for $\hkr$ to be a matching ensures that
the $SU(3)$-structures can be chosen so that $\hkr$ is a `\hk rotation'
of the $SU(2)$-structures defining the asymptotic limit.
According to Kovalev \cite[Theorem 5.34]{kovalev03} the map \eqref{eq:gluemap}
can then be used to glue together the product \gtstr s on $S^1 \times V_+$ and
$S^1 \times V_-$ to a torsion-free \gtstr{} on $M$.
\end{proof}

\subsection{Topology of twisted connected sums}
\label{subsec:tcs_top}

Let $(Z_\pm, \kd_\pm)$ be a pair of building blocks with a matching
$\hkr : \kd_+ \to \kd_-$. We think of $\hkr$ as allowing us to identify
both $\kd_+$ and
$\kd_-$ with a standard smooth K3 surface $\kd$. Letting $L := H^2(\kd)$,
we can then think of the polarising lattices $N_\pm$ of the two blocks as
a pair of sublattices of $L$.

Let $T_\pm$ denote the orthogonal complement of $N_\pm$ in $L$.
The following result summarises the cohomology of the twisted connected
sum $M$.

\begin{thm}[{\cite[Theorem 4.8]{g2m}}]
Suppose $M$ is a twisted connected sum of building blocks $(Z_\pm, \kd_\pm)$
such that the kernel of $H^2(Z_\pm) \to H^2(\kd_\pm)$ is generated
by $[\kd_\pm]$. Then
\label{thm:tcs_H}
\hfill
  \begin{enumerate}
  \item \label{it:h1m} $\pi_1(M)=0$ and $H^1(M)=0$;
  \item \label{it:h2m}
    $H^2(M) \cong N_+\cap N_-$;
  \item \label{it:h3m}
    $H^3(M) \cong \Z \oplus (L/_{N_+ + N_-}) \oplus (N_-\cap T_+)
    \oplus (N_+\cap T_-)\oplus H^3(Z_+)\oplus H^3(Z_-)$;
  \item \label{it:h4m}
    $H^4(M) \cong H^4(\kd) \oplus (T_+\cap T_-)\oplus
     (L/_{N_- + T_+})\oplus (L/_{N_+ + T_-}) \oplus H^3(Z_+) \oplus H^3(Z_-)$.
  \end{enumerate}
\end{thm}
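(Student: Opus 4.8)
The plan is to compute $H^*(M)$ by a Mayer--Vietoris argument for the decomposition $M = A \cup_C B$ with $A = S^1 \times V_+$, $B = S^1 \times V_-$ and $C \simeq T^2 \times \kd$ a collar of the common boundary, glued via \eqref{eq:gluemap}. The essential bookkeeping device is the \emph{twist}: writing $\partial V_\pm \cong S^1 \times \kd$, the boundary of $A$ is $S^1_{e_+} \times S^1_{b_+} \times \kd$ with $S^1_{e_+}$ the external circle and $S^1_{b_+}$ the $\kd$-neck circle, and \eqref{eq:gluemap} identifies $S^1_{e_+}$ with the neck circle $S^1_{b_-}$ of $B$ and $S^1_{b_+}$ with the external circle $S^1_{e_-}$ of $B$. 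Thus $C$ has two circle directions $\alpha = e_+ = b_-$ and $\beta = b_+ = e_-$, and $H^*(C) \cong H^*(T^2) \otimes H^*(\kd)$ by K\"unneth, with $H^*(T^2)$ spanned by $1, \alpha, \beta, \alpha \wedge \beta$.

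First I would record the cohomology of a single building block $V = Z \setminus U$. Because $\kd = f^{-1}(\infty)$ is a \emph{fibre} of $f$, its normal bundle in $Z$ is trivial, so indeed $\partial V \cong S^1 \times \kd$; and the Thom isomorphism identifies $H^k(Z,V)$ with $H^{k-2}(\kd)$, the connecting maps $H^{k-2}(\kd) \to H^k(Z)$ being Gysin pushforwards, Poincar\'e dual to the restriction $H^*(Z) \to H^*(\kd)$ (image $N$ in degree $2$, kernel $\Z[\kd]$). Running the long exact sequence of the pair $(Z,V)$---using $H^1(\kd) = H^3(\kd) = 0$, $H^1(Z) = 0$, that $H^3(Z)$ and $L/N$ are torsion-free, and that $-K_Z$ is indivisible---gives
\[ H^1(V) = 0, \qquad H^2(V) \cong N, \qquad H^3(V) \cong H^3(Z) \oplus T, \qquad H^4(V) \cong \Z, \]
where $T := N^\perp \subseteq L$; and the restriction $H^*(V) \to H^*(\partial V) = H^*(S^1 \times \kd)$ carries $H^2(V) = N$ isomorphically onto $N \subseteq L = H^2(\kd)$, carries $T \subseteq H^3(V)$ onto $T \subseteq L = H^1(S^1) \otimes H^2(\kd)$, kills $H^3(Z)$, and (as $V$ is simply-connected) carries $H^4(V) = \Z$ isomorphically onto $H^4(\kd)$.

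Next I would apply K\"unneth to $H^*(A) = H^*(V_+) \otimes H^*(S^1_{e_+})$ and likewise for $B$, feed the above into Mayer--Vietoris, and write $r^k \colon H^k(A) \oplus H^k(B) \to H^k(C)$ for the difference of the two restrictions, so that $0 \to \coker r^{k-1} \to H^k(M) \to \ker r^k \to 0$. Chasing the restrictions through the twist gives: $r^1$ is an isomorphism $\Z\alpha \oplus \Z\beta \to \Z\alpha \oplus \Z\beta$, whence $H^1(M) = 0$; $r^2 \colon N_+ \oplus N_- \to \Z(\alpha\wedge\beta) \oplus L$ lands in the $L$-summand as $(n_+, n_-) \mapsto n_+ - n_-$, so $\ker r^2 \cong N_+ \cap N_-$ and $\coker r^2 \cong \Z(\alpha\wedge\beta) \oplus L/(N_+ + N_-)$; $r^3$ sends, because of the twist, the $N_+$-summand of $H^3(A)$ and the $T_-$-summand of $H^3(B)$ into one copy $\alpha \otimes L$ of $H^3(C)$ and $T_+, N_-$ into the other copy $\beta \otimes L$, while killing $H^3(Z_\pm)$, so $\ker r^3 \cong H^3(Z_+) \oplus H^3(Z_-) \oplus (N_+ \cap T_-) \oplus (N_- \cap T_+)$ and $\coker r^3 \cong L/(N_+ + T_-) \oplus L/(N_- + T_+)$; and $r^4$ sends the summands $H^4(V_\pm) \cong \Z$ onto $H^4(\kd)$ and $T_\pm$ into $H^2(T^2) \otimes H^2(\kd) \cong L$ while killing $H^3(Z_\pm)$, so $\ker r^4 \cong \Z \oplus (T_+ \cap T_-) \oplus H^3(Z_+) \oplus H^3(Z_-)$. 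Every $\ker r^k$ occurring is free, so these short exact sequences split, and assembling the pieces yields exactly (ii)--(iv), with the extra $\Z$ in $H^3(M)$ the class $\partial(\alpha\wedge\beta)$ and the extra $H^4(S) \cong \Z$ in $H^4(M)$ the fibre class; finally $\pi_1(M) = 0$ follows from van Kampen using $\pi_1(V_\pm) = 0$, completing (i).

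I expect the fiddliest point to be getting the restriction maps right through the twist---in particular that in one of the two $L$-slots of $H^3(C)$ the summand $N_+$ meets $T_-$ rather than $N_-$, which is precisely the source of the asymmetry between $H^2(M) \cong N_+ \cap N_-$ and the $N_+ \cap T_-$, $N_- \cap T_+$ terms in $H^3(M)$ (and the $L/(N_\pm + T_\mp)$ terms in $H^4(M)$). Two inputs should be isolated and checked beforehand: (a) that $\kd$, being a fibre of $f$, has trivial normal bundle, so $\partial V \cong S^1 \times \kd$ and the Gysin sequence is as stated; and (b) that $H^4(V)$ is torsion-free of rank $1$, which uses both the indivisibility of $-K_Z$ in $H^2(Z)$ and the primitivity of $N$ in $L$ (equivalently $L/N$ torsion-free). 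One must also fix orientation and sign conventions so that the fibre classes from the two sides cancel under $r^4$ in $H^4(C)$---contributing the $\Z = H^4(S)$ to $H^4(M)$---rather than doubling or producing torsion.
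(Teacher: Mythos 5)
Your proposal is correct and follows essentially the same route as the proof of the cited source \cite[Theorem 4.8]{g2m} (the present paper only quotes this statement rather than reproving it): Mayer--Vietoris for $M=(S^1\times V_+)\cup(S^1\times V_-)$ over $T^2\times\kd$, with $H^*(V_\pm)$ and its boundary restriction extracted from the long exact sequence of the pair $(Z_\pm,V_\pm)$ via the Thom isomorphism for the trivial normal bundle of $\kd_\pm$, and with the twist tracked exactly as you describe; your identification of where indivisibility of $-K_Z$, primitivity of $N\subset L$, and torsion-freeness of $H^3(Z_\pm)$ enter (freeness of the kernels, hence splitting) matches the original. The one input you assert rather than prove, $\pi_1(V_\pm)=0$, is likewise taken from the earlier results on building blocks in that literature, so this is not a gap in substance.
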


\noindent
This implies in particular that if the matching $\hkr$ is perpendicular
(\ie if $N_+$ is perpendicular to~$N_-$ in $L$, \cf Definition \ref{def:config})
then $H^4(M)$ is torsion-free, and
\begin{equation}
b_2(M) = 0, \quad
b_3(M) = b_3(Z_+) + b_3(Z_-) + 23 .
\end{equation}
If in addition the perpendicular direct sum $N_+ \perp N_- \subset L$ is
primitive then $M$ is 2-connected. 

We need to devote some extra attention to describing $H^4(M)$, since that
contains the spin characteristic class $p_M$, whose greatest divisor
$d(M)$ is one of our classifying invariants from Theorem \ref{thm:2c7m}.
Let
\begin{equation}
\label{eq:h0z}
H^4(Z_+) \oplus_0 H^4(Z_-) \subset H^4(Z_+) \oplus H^4(Z_-)
\end{equation}
be the subgroup of pairs with equal image in $H^4(\kd)$.
We define a homomorphism
\begin{equation}
\label{eq:defy}
Y : H^4(Z_+) \oplus_0 H^4(Z_-) \to H^4(M)
\end{equation}
as follows (\cf \cite[Definition 4.13]{g2m}).
For $([\alpha_+], [\alpha_-]) \in H^4(Z_+) \oplus_0 H^4(Z_-)$, let $[\beta]$ be
their common image in $H^4(\kd)$. 
Then we may choose the cocycle
$\alpha_\pm \in C^4(Z_\pm;\bbz)$ so that the restriction of $\alpha_\pm$ to
$\partial V_\pm \cong S^1 \times \kd \subset Z_\pm$ is the pull-back
of $\beta$ by projection to the $\kd$ factor.
Then the pull-backs of $\alpha_\pm$ to $S^1 \times V_\pm$ 
patch to a cocycle on $M$ under the gluing \eqref{eq:gluemap},
and we may set $Y([\alpha_+], [\alpha_-])$ to be the
class represented by that cocycle.

Let
\begin{equation}
\label{eq:flat}
\flat^{\pm} : N_\mp \to N_\pm^*
\end{equation}
denote the homomorphism induced by the intersection form of $L$,
and $N_\mp' \subseteq N_\pm^*$ the image of~$\flat^{\pm}$.
Since $N_\pm^* \subset H^4(Z_\pm)$, we can also regard
$N'_\mp$ as a subset of $H^4(Z_+) \oplus_0 H^4(Z_-)$.

\begin{lem}[{\cite[Lemma 4.14]{g2m}}]
\label{lem:imy}
The image of the map $Y$ defined in \eqref{eq:defy} is the direct summand 
$H^4(\kd) \oplus (L/_{N_- + T_+})\oplus (L/_{N_+ + T_-})$ of $H^4(M)$,
and the kernel is $N'_- \oplus N'_+ \subset H^4(Z_+) \oplus_0 H^4(Z_-)$.
\end{lem}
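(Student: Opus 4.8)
The plan is to read off the image and the kernel of $Y$ from the Mayer--Vietoris sequence for $M = A_+\cup A_-$, with $A_\pm := S^1\times V_\pm$ and $A_0 := A_+\cap A_-\simeq S^1\times S^1\times\kd$, which underlies Theorem~\ref{thm:tcs_H}, together with the explicit form of the isomorphism proved there. The input on a single half $V_\pm = Z_\pm\setminus U_\pm$ comes from the long exact sequence of the pair $(Z_\pm,V_\pm)$ and the Thom isomorphism $H^*(Z_\pm,V_\pm)\cong H^*(U_\pm,\partial U_\pm)\cong H^{*-2}(\kd)$: one obtains $H^2(V_\pm)\cong N_\pm$, an extension $0\to H^3(Z_\pm)\to H^3(V_\pm)\to T_\pm\to 0$, and $H^4(V_\pm)\cong\coker\bigl(g_\pm\colon H^2(\kd)\to H^4(Z_\pm)\bigr)$, where $g_\pm$ is the Gysin map of the fibre $\kd\hookrightarrow Z_\pm$. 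Since $N_\pm\subset L$ is primitive and $L$ is unimodular, the restriction $L = L^*\twoheadrightarrow N_\pm^*$ is surjective, and one checks that $g_\pm$ has kernel $T_\pm$ and image the direct summand $N_\pm^* = \ker(\iota_\pm^*\colon H^4(Z_\pm)\to H^4(\kd))$ of $H^4(Z_\pm)$; in particular $H^4(V_\pm)\cong\Z$. That the image of $Y$ is the summand $H^4(S)\oplus(L/_{N_-+T_+})\oplus(L/_{N_++T_-})$ is, in effect, the defining property of the corresponding part of the isomorphism of Theorem~\ref{thm:tcs_H}, so I would take it from \cite[\S 4]{g2m}; it is also consistent with the kernel computation below by a rank count, using that each $\iota_\pm^*$ is surjective (as $-K_{Z_\pm}$ is indivisible), so that $H^4(Z_+)\oplus_0 H^4(Z_-)$ has rank $1 + \rk N_+ + \rk N_-$.

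For the kernel, first restrict to $A_\pm$. By construction the restriction of $Y([\alpha_+],[\alpha_-])$ to $A_\pm = S^1\times V_\pm$ is the pullback of $[\alpha_\pm|_{V_\pm}]\in H^4(V_\pm)$ along the projection $A_\pm\to V_\pm$, which has a section; hence it vanishes iff $[\alpha_\pm|_{V_\pm}] = 0$, equivalently $[\alpha_\pm]\in g_\pm(H^2(\kd)) = N_\pm^*$. As $N_\pm^*\subseteq\ker\iota_\pm^*$, this shows $\ker Y\subseteq N_+^*\oplus N_-^*$. For $([\alpha_+],[\alpha_-])$ in this subgroup the common image $\beta\in H^4(\kd)$ vanishes, so $Y([\alpha_+],[\alpha_-])$ lies in the image of $\partial\colon H^3(A_0)\to H^4(M)$, and the next step is to identify that class by unwinding the Mayer--Vietoris boundary map at the cochain level. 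Choosing $\alpha_\pm$ to vanish on $\partial V_\pm$ and $\gamma_\pm\in C^3(V_\pm)$ with $d\gamma_\pm = \alpha_\pm|_{V_\pm}$, the restriction $\gamma_\pm|_{\partial V_\pm}$ is a cocycle on $\partial V_\pm\cong S^1\times\kd$ whose class $e_\pm\in H^3(S^1\times\kd)\cong H^2(\kd)$ is well-defined modulo the image of $H^3(V_\pm)\to H^3(\partial V_\pm)$, which is $T_\pm$, and whose class in $H^2(\kd)/T_\pm$ maps to $[\alpha_\pm]$ under the isomorphism $H^2(\kd)/T_\pm\cong N_\pm^*$ induced by $g_\pm$; moreover $Y([\alpha_+],[\alpha_-]) = \partial\theta$, where $\theta$ is the difference of the pullbacks of $e_+$ and $e_-$ to $A_0$. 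Since the gluing \eqref{eq:gluemap} interchanges the two circle factors of the two halves, the restrictions $H^2(V_\pm)\cong N_\pm\hookrightarrow L$ and $H^3(V_\pm)\twoheadrightarrow T_\pm\hookrightarrow L$ combine so that the image of $H^3(A_+)\oplus H^3(A_-)\to H^3(A_0)$ meets the two summands of $H^3(A_0)\cong H^2(\kd)\oplus H^2(\kd)$ in $N_+ + T_-$ and $N_- + T_+$ respectively; hence $\partial\theta = 0$ iff $e_+\in N_- + T_+$ and $e_-\in N_+ + T_-$. Finally, the restriction of $g_\pm$ to $N_\mp$ is the map $\flat^\pm$ of \eqref{eq:flat}, so $g_\pm(N_\mp) = N_\mp'$, and the two conditions become exactly $[\alpha_+]\in N_-'$ and $[\alpha_-]\in N_+'$; that is, $\ker Y = N_-'\oplus N_+'$.

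The step I expect to be the main obstacle is the cochain-level bookkeeping in the second paragraph: following the Mayer--Vietoris boundary homomorphism explicitly through the circle-swap in \eqref{eq:gluemap}, and pinning down the several sign- and convention-dependent identifications --- $H^*(Z_\pm,V_\pm)\cong H^{*-2}(\kd)$, $H^4(V_\pm)\cong\coker g_\pm$, $H^2(\kd)/T_\pm\cong N_\pm^*$, and the identification of $\kd_\pm$ with the standard $\kd$ via $\hkr$ --- consistently, so that all occurrences of $N_\pm$, $T_\pm$, and the intersection form of $L$ match up. None of this is conceptually deep; it is the same kind of computation that produces the isomorphism of Theorem~\ref{thm:tcs_H} in \cite[\S 4]{g2m}, so in practice the most economical route is to present the argument as a refinement of that computation rather than to redo it from first principles.
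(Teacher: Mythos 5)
Your argument is correct, but it follows a different route from the one the paper actually relies on. The paper never proves Lemma \ref{lem:imy} by a Mayer--Vietoris analysis on $M$ itself: the statement is quoted as part of the \cite{g2m} package (the definition of $Y$ and the decomposition of $H^4(M)$ in Theorem \ref{thm:tcs_H} come from \cite[\S 4]{g2m}), and the kernel claim is then re-derived in \S\ref{sec:ek_tcs} via the \spc coboundary $\cw$: there $Y$ is identified with the composition $H^4(Z_+)\oplus_0 H^4(Z_-)\cong H^4(\cw)\to H^4(M)$, so $\ker Y$ is the image of $H^4(\cw,M)\to H^4(\cw)$, and that image is read off from the explicit duality description \eqref{eq:h4push} as $N'_-\oplus N'_+$ --- essentially for free once $\cw$ and its intersection form are set up (which is needed anyway for Theorem \ref{thm:ek_tcs}). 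Your approach instead stays on $M$: the building-block facts you quote ($H^2(V_\pm)\cong N_\pm$, the extension $0\to H^3(Z_\pm)\to H^3(V_\pm)\to T_\pm\to 0$, $H^4(V_\pm)\cong\coker g_\pm\cong\Z$, $\ker g_\pm=T_\pm$, $\mathrm{im}\,g_\pm=N_\pm^*$) are exactly those of \cite[\S 4]{g2m} and are correct (primitivity of $N_\pm$ plus unimodularity of $L$ and torsion-freeness of $H^4(Z_\pm)$ make $\mathrm{im}\,g_\pm$ saturated, whence the identification with $\ker\iota_\pm^*$), and your tracking of the connecting homomorphism through the circle swap, with the image of $H^3(A_+)\oplus H^3(A_-)$ in $L\oplus L$ being $(N_++T_-)\oplus(N_-+T_+)$ and $g_\pm|_{N_\mp}=\flat^\pm$, does give $\ker Y=N'_-\oplus N'_+$. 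What the coboundary route buys is that this bookkeeping (the step you flag as the main obstacle) disappears; what your route buys is independence from the 8-dimensional construction, and it is closer in spirit to how \cite{g2m} builds Theorem \ref{thm:tcs_H} in the first place.

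One caveat on the image statement: the rank count you give does not by itself identify the image as the stated \emph{direct summand} (a priori only a finite-index subgroup of the right rank), so that part genuinely rests on the citation to \cite[\S 4]{g2m}, where the splitting of $H^4(M)$ is constructed so that the $(L/_{N_-+T_+})\oplus(L/_{N_++T_-})$ part is precisely the image of the connecting map and the $H^4(\kd)$ part is hit by classes restricting to the fibre class. Deferring to \cite{g2m} here is acceptable --- the paper does the same --- but note that your own Mayer--Vietoris computation already shows that $Y$ restricted to $N_+^*\oplus N_-^*$ maps \emph{onto} the whole image of $\partial\colon H^3(A_0)\to H^4(M)$, so with one more sentence (surjectivity of $H^4(Z_+)\oplus_0 H^4(Z_-)\to H^4(\kd)$ from \eqref{eq:ses}) you could make the image claim self-contained relative to the construction of the splitting rather than its statement.
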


\begin{rmk}
\label{rmk:h4tor}
The image of $Y$ contains all the torsion of $H^4(M)$, which we can identify
as
\begin{equation}
\label{eq:h4tor}
\Tor H^4(M) \cong \Tor N_+^*/N'_- \oplus \Tor N_-^*/N'_+ .
\end{equation}
\end{rmk}

\begin{prop}[{\cite[Proposition 4.20/Remark 4.21]{g2m}}]
\label{prop:p1y}
\[ p_M = -Y(c_2(Z_+), c_2(Z_-)) . \]
\end{prop}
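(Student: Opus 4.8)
The plan is to verify the identity by restricting both sides to the two pieces $S^1\times V_\pm$ of the twisted connected sum and to its neck, and then patching at the level of cocycles using the construction of $Y$.

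First I would check that $(c_2(Z_+),c_2(Z_-))$ lies in the domain $H^4(Z_+)\oplus_0 H^4(Z_-)$ of $Y$. Since $\kd_\pm\in|{-}K_{Z_\pm}|$, adjunction gives $TZ_\pm|_{\kd_\pm}\cong T\kd_\pm\oplus N_{\kd_\pm/Z_\pm}$ with $c_1(T\kd_\pm)=0$, so $c_2(Z_\pm)|_{\kd_\pm}=c_2(\kd_\pm)$, which is $24$ times a generator of $H^4(\kd_\pm)\cong\bbz$. A matching $\hkr$ is orientation preserving, since it takes the hyper-Kähler orientation class $(\omega^I_+)^2$ to $(\omega^J_-)^2=(\omega^I_-)^2$; hence it identifies these two Euler classes, and $(c_2(Z_+),c_2(Z_-))$ is in the domain.

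The key local statement is that $p_M$ restricts on $S^1\times V_\pm$ to the pull-back of $-c_2(Z_\pm)|_{V_\pm}$. Indeed $S^1\times V_\pm$ sits in $M$ as a codimension-$0$ submanifold, so $TM|_{S^1\times V_\pm}\cong\trivr\oplus TV_\pm$ with $TV_\pm=TZ_\pm|_{V_\pm}$ carrying the almost complex structure of $Z_\pm$. Now $c_1(TZ_\pm)=-K_{Z_\pm}=[\kd_\pm]=f_\pm^\star[\infty]$ restricts to $0$ in $H^2(V_\pm;\bbz)$, because $f_\pm(V_\pm)$ lies in the contractible closed disc $\PP^1\setminus\disc$. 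Hence by the $c_1=0$ case of Lemma~\ref{lem:px}(i), $\tfrac{p_1}{2}(TV_\pm)=-c_2(Z_\pm)|_{V_\pm}$, and since $\tfrac{p_1}{2}$ is stable and independent of the choice of spin structure, $p_M|_{S^1\times V_\pm}$ is its pull-back along the projection $S^1\times V_\pm\to V_\pm$. On the other hand, the cochain-level definition of $Y$ uses cocycles $\alpha_\pm$ representing $c_2(Z_\pm)$ that restrict on $\partial V_\pm\cong S^1\times\kd$ to the pull-back of a cocycle representing $\beta\in H^4(\kd)$, namely the common image $c_2(Z_\pm)|_{\kd}=c_2(\kd)$; so $Y(c_2(Z_+),c_2(Z_-))$ is represented by the patched pull-backs of the $\alpha_\pm$ and restricts on $S^1\times V_\pm$ to the pull-back of $+c_2(Z_\pm)|_{V_\pm}$. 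Consequently $p_M+Y(c_2(Z_+),c_2(Z_-))$ restricts to $0$ on both $S^1\times V_+$ and $S^1\times V_-$.

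It remains to upgrade this to an equality in $H^4(M)$. In the Mayer--Vietoris sequence for $M=(S^1\times V_+)\cup(S^1\times V_-)$ with intersection $S^1\times S^1\times\kd$, the kernel of restriction to the two pieces is the image of the connecting homomorphism from $H^3(S^1\times S^1\times\kd)$ modulo classes pulled back from the pieces, so vanishing of the restrictions is a priori not sufficient. To close this I would choose the cocycle representing $p_M$ so that near the neck it is the pull-back from $\kd$ of a cocycle representing $-c_2(\kd)=-\beta$: over a collar $\cong\bbr\times S^1\times S^1\times\kd$ the bundle $TM$ is homotopic to the pull-back of $T\kd\oplus\trivr^{3}$, so $\tfrac{p_1}{2}(TM)$ does admit such a representative. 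That representative cancels the one for $Y(c_2(Z_+),c_2(Z_-))$ over the overlap on the nose (both being pulled back from $\kd$, the $u\leftrightarrow v$ swap in \eqref{eq:gluemap} is irrelevant to them), so with these choices the difference $p_M+Y(c_2(Z_+),c_2(Z_-))$ is the image of $0\in H^3(S^1\times S^1\times\kd)$, giving $p_M=-Y(c_2(Z_+),c_2(Z_-))$. I expect this last step — the cochain-level compatibility over the neck of the standard representative of $\tfrac{p_1}{2}(TM)$ with the one manufactured by $Y$ — to be the part that needs the most care; it is precisely what the collar-based definition of $Y$ was arranged to make work.
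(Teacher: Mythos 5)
Your first two steps are fine: the domain check for $Y$ and the identification $p_M|_{S^1\times V_\pm} = -c_2(Z_\pm)|_{V_\pm}$ (via $c_1(Z_\pm)|_{V_\pm}=0$ and Lemma \ref{lem:px}(i)) are correct, and you rightly recognise that this alone does not determine the class, since restriction of $H^4(M)$ to the two pieces has a kernel coming from $H^3(S^1\times S^1\times\kd)$. The gap is in how you close that kernel issue. Arranging a cocycle $P$ for $p_M$ that agrees on the overlap with the negative of the cocycle $Q$ built by $Y$ does \emph{not} imply that the class of $P+Q$ is the image of $0$ under the Mayer--Vietoris connecting map: that connecting element is computed from primitives $\beta_\pm$ of $(P+Q)|_{S^1\times V_\pm}$ on the two pieces, via $[\beta_+ - \beta_-]$ on the neck, not from the restriction of $P+Q$ itself to the overlap. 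A class can perfectly well have a representative that vanishes identically on the overlap and restricts to zero in the cohomology of each piece and still be nonzero (a generator of $H^4(S^4)$, with the two-disc cover, represented by a cocycle supported away from the neck, is the standard example). To make your route work you would need literal cocycle-level (equivalently, stable-bundle-level) agreement of $P$ with $-Q$ over the \emph{whole} of each piece $S^1\times V_\pm$, compatibly with the gluing \eqref{eq:gluemap} -- which is essentially the patching argument of \cite[Proposition 4.20/Remark 4.21]{g2m} and is exactly the "part that needs the most care" you defer; as written, that step is asserted rather than proved, and the justification given for it is invalid.

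For comparison, the paper avoids this difficulty altogether by computing on the \spc coboundary $\cw$ constructed in \S\ref{sec:ek_tcs}: there the Mayer--Vietoris map $H^4(\cw)\to H^4(Z_+)\oplus H^4(Z_-)$ is \emph{injective} because $H^3(\kd)=0$, so $\pc_\cw$ is pinned down by its restrictions $-c_2(Z_\pm)+c_1(Z_\pm)^2=-c_2(Z_\pm)$ to $B_\pm=Z_\pm\times\disc$, giving $\pc_\cw=(-c_2(Z_+),-c_2(Z_-))$ under the natural isomorphism $H^4(\cw)\cong H^4(Z_+)\oplus_0 H^4(Z_-)$; since the composition of that isomorphism with restriction to $\partial\cw=M$ is $Y$ and $\pc_\cw|_M=p_M$ (the class $z$ restricts trivially to $M$), the proposition follows with no cocycle bookkeeping. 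If you want to salvage a direct proof on $M$, you must either upgrade your restriction argument to a genuine stable isomorphism of $TM$ with the patched bundle underlying $Y(c_2(Z_+),c_2(Z_-))$, or pass to a coboundary where restriction to the pieces is injective, as the paper does.
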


Combining the previous two results, the greatest divisor $d(M)$ of $p_M$ can be
determined from $c_2(Z_\pm) \in H^4(Z_\pm)$ (which depends purely on the
blocks) and $N'_\mp \subset H^4(Z_\pm)$ (which depends on the matching).

\section{The generalised Eells-Kuiper invariant of twisted connected sums}
\label{sec:ek_tcs}

The main result of this section is Theorem \ref{thm:ek_tcs},
which gives a formula for the generalised Eells-Kuiper invariant $\EK(M)$ of a
twisted connected sum $M$ in terms of data for the pair of building
blocks used and data for the matching.
We compute $\EK(M)$ using an explicit \spc coboundary.

Throughout this section, let $M$ be the twisted connected sum of
a pair of building blocks $(Z_\pm, \kd_\pm)$, using a matching
$\hkr \colon \kd_+ \to \kd_-$. To simplify the description of the cohomology,
we assume that the kernel
of $H^2(Z_\pm) \to H^2(\kd_\pm)$ is generated by $[\kd_\pm]$.

\setlength\normalparindent{\the\parindent}
\noindent
\begin{tabular}{@{}b{9cm}b{5cm}}
\hspace{\normalparindent}The \spc coboundaries we construct to compute the 
generalised Eells-Kuiper invariant for twisted connected
sum manifolds can be viewed as {\em parametrised plumbing} of trivial
disc bundles. By way of context we recall the plumbing of bundles;
see Browder \cite[V \S2]{browder72}.
If $\pi_0 \colon W_0 \to M_0$ is a \mbox{$D^p$-bundle} over 
a $q$-manifold and 
$\pi_1 \colon W_1 \to M_1$
is a \mbox{$D^q$-bundle} over 
a \mbox{$p$-manifold}, then the \emph{plumbing} of $\pi_0$ and $\pi_1$ is the
manifold $W = (W_0 \sqcup W_1)/{\sim}$ obtained from the disjoint union
of $W_0$ and $W_1$ by trivialising $\pi_0$ over $D^q \subset M_0$ and 
$\pi_1$ over $D^p \subset M_1$, identifying the resulting spaces 
$D^p \times D^q$ and $D^q \times D^p$ by exchanging co-ordinates
and finally smoothing corners.
The case of a pair of trivial $D^1$-bundles over
$S^1$ is illustrated on the right.
&
\begin{tikzpicture}[y=0.80pt, x=0.80pt, yscale=-3.7, xscale=3.7, inner sep=0pt, outer sep=0pt]
\begin{scope}[shift={(-3.26721,-244.17523)}]
  \path[draw=gray,miter limit=4.00,line width=1pt]
    (21.3672,277.1752) circle (0.4233cm);
  \path[draw=gray,miter limit=4.00,line width=1pt]
    (32.4377,276.6514)arc(105.187:187.915:15.000)arc(187.915:270.643:15.000)arc(270.643:353.372:15.000)arc(-6.628:76.100:15.000);
  \path[draw=black,miter limit=4.00,line width=1pt]
    (18.5743,259.4525)arc(188.700:272.400:18.000)arc(272.400:356.100:18.000)arc(-3.900:79.800:18.000);
  \path[draw=black,miter limit=4.00,line width=1pt]
    (24.8356,265.6874)arc(-73.200:9.464:12.000)arc(9.464:92.128:12.000)arc(92.128:174.791:12.000)arc(174.791:257.455:12.000);
  \path[draw=black,miter limit=4.00,line width=1pt]
    (24.6517,259.5780)arc(192.500:273.133:12.000)arc(273.133:353.767:12.000)arc(-6.233:74.400:12.000);
  \path[draw=black,miter limit=4.00,line width=1pt]
    (24.5857,259.4653)arc(-79.700:5.563:18.000)arc(5.562:90.825:18.000)arc(90.825:176.087:18.000)arc(176.087:261.350:18.000);
  \path[draw=black,miter limit=4.00,line width=1pt]
    (32.3354,279.7179)arc(102.943:169.708:18.000);
  \path[draw=black,miter limit=4.00,line width=1pt]
    (32.0864,273.3857)arc(110.900:163.204:12.000);
\end{scope}
\end{tikzpicture}
\end{tabular}

\begin{constr}[Parametric plumbing]
\label{constr:coboundary}
As in Construction \ref{constr:tcs}, we use the tubular neighbourhood 
$U_\pm \cong \disc \times \kd_\pm \subset Z_\pm$ of~$\kd_\pm$;
denote the coordinate on the open disc $\disc$ by $z$.
Consider $B_\pm := \disc \times Z_\pm$, and denote the coordinate on its
open disc factor by $w$.
Form an 8-manifold $\iw$ by gluing $B_+$ and $B_-$ along
$\disc \times U_\pm$, using the map
\begin{align*}
\disc \times \disc \times \kd_+ & \to \disc \times \disc \times \kd_-, \\
(w, z, x) &\mapsto (z, w, \hkr(x)) .
\end{align*}
We can form a smooth compact manifold $W$ with boundary by attaching
to $\iw$ the result of gluing the boundaries
$\partial \cdisc \times \partial \cdisc \times \kd_\pm$ of
$\partial \cdisc \times (Z_\pm \setminus U_\pm)$
by $(w, z, x) \mapsto (z, w, \hkr(x))$. Comparing with~\eqref{eq:gluemap},
we see that the boundary is precisely the twisted
connected sum $M$. Hence $\cw$ is a coboundary of~$M$.
\end{constr}

Since $Z_\pm$ are not spin, $B_\pm$ and $W$ are not spin either.
However, $Z_\pm$ and $B_\pm$ have complex structures, inducing
\spc structures. While the complex structures of $B_+$ and $B_-$ do not agree
on the overlap region, their \spc structures do agree, so $W$ is \spc too.

\subsection{Cohomology}

We compute the integral cohomology of the coboundary $W$ from Construction
\ref{constr:coboundary} by Mayer-Vietoris. More precisely,
we compute $H^k(\iw) \cong H^k(W)$ and $H^k_{cpt}(\iw) \cong H^k(W,M)$ for
$k \leq 4$. One can of course recover
the remaining cohomology groups from Poincar\'e duality and universal
coefficients, but what we care about is a description for $k \leq 4$ that
lets us determine the characteristic classes and the intersection form.

There are obvious homotopy equivalences
$B_+ \cap B_- \simeq \kd$ and $B_\pm \simeq Z_\pm$, so 
there is a long exact sequence
\[ H^{k-1}(\kd) \to H^k(W) \to H^k(Z_+) \oplus H^k(Z_-) \to H^k(\kd) . \]
Hence, using that $H^2(Z_\pm) \to H^2(\kd) = L$ has image $N_\pm$ by definition 
and is assumed to have kernel $\bbz$, we find
\begin{align*}
H^1(W) &= 0 &
H^2(W) & \cong (N_+ \cap N_-) \oplus \bbz^2 \\
H^3(W) & \cong L / (N_+ + N_-) \oplus H^3(Z_+) \oplus H^3(Z_-) &
H^4(W) & \cong H^4(Z_+) \oplus_0 H^4(Z_-), %
\end{align*}
where $H^4(Z_+) \oplus_0 H^4(Z_-)$ is defined as in \eqref{eq:h0z}.
There is a natural short exact sequence
\begin{equation}
\label{eq:ses}
0 \to N_+^* \oplus N_-^* \to H^4(Z_+) \oplus_0 H^4(Z_-) \to \bbz \to 0 .
\end{equation}
We can describe the isomorphism $H^4(Z_+) \oplus_0 H^4(Z_-) \to H^4(W)$
as follows: given $(c_+, c_-) \in H^4(Z_+) \oplus_0 H^4(Z_-)$, choose a cocycle
$\alpha_0 \in C^4(\kd)$ representing the common image of $c_\pm$ in $H^4(\kd)$,
let $\alpha_\pm \in C^4(Z_\pm)$ be cocycles representing $c_\pm$ that equal
the pull-back of $\alpha_0$ on $\disc \times \kd_\pm$, and form a cocycle on
$W$ by patching the pull-backs of $\alpha_\pm$ to $B_\pm$. It is clear
that the composition of this isomorphism with the restriction map
$H^4(\cw) \to H^4(M)$ equals the map $Y$ defined in \eqref{eq:defy}.

\begin{rmk}
\label{rmk:natural}
Only the isomorphism for $H^3(W)$ involves making an arbitrary choice
for a splitting of a short exact sequence---below it will turn out to be
important that the isomorphisms for $H^2(W)$ and $H^4(W)$ are natural. 
\end{rmk}

Similarly, we can compute $H^*(\cw,M)$ from a long exact sequence
\[ H^{k-4}(\kd) \to H^{k-2}(Z_+) \oplus H^{k-2}(Z_-) \to
H^k(\cw,M) \to H^{k-3}(\kd), \]
finding
\begin{align*}
H^1(\cw,M) &= 0 &
H^2(\cw,M) & \cong \bbz^2 \\
H^3(\cw,M) &= 0  &
H^4(\cw,M) & \cong (H^2(Z_+) \oplus H^2(Z_-))/\bbz . %
\end{align*}
Note there is a short exact sequence
\[ 0 \to \bbz \to (H^2(Z_+) \oplus H^2(Z_-))/\bbz  \to N_+ \oplus N_- \to 0. \]

\subsection{Characteristic classes}

As emphasised in Remark \ref{rmk:natural}, the isomorphisms for $H^2(W)$
and $H^4(W)$ above are natural. One consequence of this is that we can pin down
the characteristic classes of $W$ by considering their restrictions to the open
subsets $B_+ = \disc \times Z_+$ and $B_- = \disc \times Z_-$.

The restriction of $w_2(W) \in H^2(W; \Z/2)$
to $B_\pm$ is $w_2(B_\pm)$, which equals the pull-back of $w_2(Z_\pm)$.
That is the mod~$2$ reduction of $c_1(Z_\pm) = [\kd_\pm] \in H^2(Z_\pm)$ and so
$w_2(Z_\pm)$ is the mod~$2$ Poincar\'e dual of $\kd_\pm$ in~$Z_\pm$.
The \spc structures of $B_\pm$ patch up to a unique \spc
structure on $W$: this is essentially just saying that we
can specify an integral pre-image $z \in H^2(W)$ of $w_2(W)$ uniquely by
setting the restriction to $B_\pm$ to be $c_1(Z_\pm)$. 
Note that the restriction of $z^2$ to each of $B_\pm$ is 0; hence
\[ z^2 = 0 \in H^4(W). \]
Similarly we can pin down the \spc characteristic class $\pc_W$.
The restriction to $B_\pm$ is $\pc_{B_\pm}$, which by
Lemma \ref{lem:px}\ref{it:acx} equals $-c_2 + c_1^2$ of the relevant
$U(4)$-structure. Using again that $c_1(Z_\pm)^2 = 0$, we thus have

\begin{lem}
The image of $\pc_W$ in
$H^4(Z_+) \oplus_0 H^4(Z_-)$ is $(-c_2(Z_+), -c_2(Z_-))$.
\end{lem}

The restriction of $\pc_\cw$ to $M$ is $p_M$, which recovers
Proposition \ref{prop:p1y},
and arguably makes for a nicer proof than that in \cite[Proposition 4.20]{g2m}.

Since the normal bundle of $\kd \subset W$ is trivial,
the image of $\pc_W$ in $H^4(\kd) \cong \Z$ is given by $p_\kd = -c_2(\kd) = -24$, so in
view of the short exact sequence \eqref{eq:ses} most of the interesting
information about $\pc_W$ is captured by
the pre-image of $\pc_W \mmod 24$ in $(N_+^* \oplus N_-^*) \otimes \Z/24$.
Similarly, for any building block $Z$, the image of $c_2(Z)$ under
$H^4(Z) \to H^4(\kd)$ is 24, so $c_2(Z) \mmod 24$ has a preimage in
$H^4(Z, \kd; \Z/24) \cong N^* \otimes \Z/24$.
Denote that by
\begin{equation}
\label{eq:c2bar}
\bar c_2(Z) \in N^* \otimes \Z/24 .
\end{equation}
The calculation above implies that $\pc_W \mmod 24$ is determined
by $\bar c_2(Z_\pm)$. %

\begin{cor}
\label{cor:pmod24}
$\pc_W \mmod 24 \in H^4(W; \Z/24) \cong
H^4(Z_+;\Z/24) \oplus_0 H^4(Z_-; \Z/24)$ is the image of
$(-\bar c_2(Z_+), -\bar c_2(Z_-))$
under the map in \eqref{eq:ses}.
\end{cor}

\begin{rmk}
\label{rmk:even}
Since $c_2(Z_\pm)$ are even, so is $\pc_W$.
Therefore Lemma \ref{lem:px}\ref{it:wu} implies that
the intersection form of $W$ must be even.
\end{rmk}

\subsection{Intersection form}
\label{subsec:intersection}

The intersection pairing between $H^4(\cw)$ and $H^4(\cw,M)$ is
simply the natural duality between $H^4(Z_+) \oplus_0 H^4(Z_-)$ and
$(H^2(Z_+) \oplus H^2(Z_-))/\bbz$. To understand the intersection form
we also need to describe the map $H^4(\cw,M) \to H^4(\cw)$.

First note that the $\bbz$ term in
$H^4(\cw,M) \cong (H^2(Z_+) \oplus H^2(Z_-))/\bbz$ is the Poincar\'e dual to
the K3 divisors, and obviously has trivial image in $H^4(\cw)$.
Hence $H^4(\cw,M) \to H^4(\cw)$ factors through the natural map 
$(H^2(Z_+) \oplus H^2(Z_-))/\bbz \to N_+ \oplus N_-$.
Dually, the composition $H^4(\cw) \cong H^4(Z_+) \oplus_0 H^4(Z_-) \to \bbz$
corresponds to restriction to the K3 divisor, which is trivial for any class
with compact support in $\iw$. So the map also factors through the inclusion
$N_+^* \oplus N_-^* \into H^4(Z_+) \oplus_0 H^4(Z_-)$, %
and is
characterised by a homomorphism $N_+ \oplus N_- \to N_+^* \oplus N_-^*$.

The intersection form on each $B_\pm$ is obviously trivial, so there are no
diagonal terms.
In summary, the map $H^4(W,M) \to H^4(M)$ is therefore the composition of
$H^4(W,M) \to N_+ \oplus N_-$,
\begin{equation}
\label{eq:h4push}
\begin{aligned}
N_+ \oplus N_- & \to N_+^* \oplus N_-^* \\
(x_+, \; x_-) &\mapsto
(\flat^+(x_-), \; \flat^-(x_+))
\end{aligned}
\end{equation}
and $N_+^* \oplus N_-^* \to H^4(W)$,
where $\flat^{\pm}$ is the inner product homomorphism as in \eqref{eq:flat}.
The image equals $N_-' \oplus N_+' \subset N_+^* \oplus N_-^*$,
where $N_\mp'$ is the image of $\flat^\pm : N_\mp \to N_\pm^*$ as before.
(This is consistent with the claim from Lemma \ref{lem:imy} that the kernel
of $Y$ is precisely $N_-' \oplus N_+'$, and also with Remark~\ref{rmk:even}.)
Both summands of $N'_- \oplus N'_+$ are isotropic and so $\sigma(W)$, the
signature of $W$ vanishes:
\begin{equation}
\label{eq:signature_W}
\sigma(W) = 0.
\end{equation}

\subsection{Computing the generalised Eells-Kuiper invariant}

Suppose that the twisted connected sum $M$ has $H^4(M)$ torsion-free, so that
the description of the generalised Eells-Kuiper invariant from
\S \ref{subsec:EK_def} is valid.
Let $d$ denote the greatest integer dividing the spin characteristic class
$p_M \in H^4(M)$. Since $M$ contains a K3 with trivial normal bundle,
$d \mid 24$ a priori. Hence $\dM := \gcd(28, \Num{\frac{d}{4}})$ is either
1 or 2, depending on whether $d$ is divisible by 8 or not.
In particular $\EK(M) \in \Z/\dM$ can only possibly be non-trivial if
$d$ is 8 or 24.

We now compute $\EK(M)$ by evaluating \eqref{eq:ek_def} for the parametric
plumbing coboundary of Construction \ref{constr:coboundary}.
The $z^4$ term in \eqref{eq:ek_def} obviously makes no contribution since
$z^2 = 0$, and the signature term vanishes by \eqref{eq:signature_W}.
We can take $\bar z$ to be the sum of the generators of
$H^2_{cpt}(\disc) \subset H^2_{cpt}(B_\pm)$, or equivalently the Poincar\'e
duals of $\{0\} \times Z_\pm$. 
This is because the intersection of $\{0\} \times Z_+$  with $\{0\} \times Z_-$
is $\{0\} \times \kd_-$, which is the Poincar\'e dual of $z_{|B_-}$,
and similarly for $z_{|B_+}$.
Thus $\bar z^2$ is twice the Poincar\'e dual to the K3 divisor (the
intersection of $\{0\} \times Z_+$ and $\{0\} \times Z_-$, generating
the copy of $\bbz$ in $(H^2(Z_+) \oplus H^2(Z_-))/\bbz$).
It follows that 
$\bar z^2 \pc_W = 2 \pc_{W|\kd}$. As above, the triviality of the normal
bundle of $\kd$ implies that $\pc_{W|\kd} = p_{\kd} = -24$, and hence
$\frac{5\bar z^2 \pc_W}{12}$ is divisible by 4.
Since the modulus $\dM$ is 1 or 2, the only possible non-trivial
contribution to the RHS of \eqref{eq:ek_def} is $\frac{\wh \alpha^2}{8}$.

The definition of $d$ means that $\pc_W \in N_-' \oplus N_+' \mmod d$,
where $N_-' \oplus N_+'$ is the image of $H^4(W,M) \to H^4(W)$ identified
in \S \ref{subsec:intersection}.
By Corollary \ref{cor:pmod24}, we can find elements
$[x_\pm] \in {N_\pm \otimes \Z/d}$
such that $\pc_W = (\flat^+([x_-]), \flat^-([x_+])) \mmod d$ from
$\bar c_2(Z_\pm) \in N_\pm^* \otimes \Z/24$ and the configuration of embeddings
$N_\pm \subset L$ of the matching.
We may then take
\[ \wh \alpha = (\flat^+(x_-), \flat^-(x_+)) \in N_-' \oplus N_+' . \]
One pre-image under \eqref{eq:h4push} of this $\wh \alpha$ in $N_+ \oplus N_-$
is $(x_+, \; x_-)$, so $\wh \alpha^2 = 2x_+.x_-$.
Hence we have

\begin{thm}
\label{thm:ek_tcs}
Let $x_\pm \in N_\pm $ such that $\flat^\mp(x_\pm) = \bar c_2(Z_\mp) \mmod d$.
Then
\begin{equation}
\label{eq:eek_claim}
\EK(M) \; = \; \frac{x_+.x_-}{4} \in \Z/\dM .
\end{equation}
\end{thm}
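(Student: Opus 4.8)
The plan is to assemble Theorem \ref{thm:ek_tcs} from the pieces already developed in this section, substituting into the defining formula \eqref{eq:ek_def}. First I would recall that, since $M$ contains a K3 divisor with trivial normal bundle, $d \mid 24$, so the modulus $\dM = \gcd(28, \Num{d/4})$ is $1$ or $2$; hence all the arithmetic below only matters modulo $2$, and it suffices to track contributions up to multiples of $2$. I would then dispose of the easy terms on the right-hand side of \eqref{eq:ek_def}: the term $z^4/4$ vanishes because $z^2 = 0 \in H^4(W)$ (established from the fact that $z|_{B_\pm} = c_1(Z_\pm)$ squares to zero, as $[\kd_\pm]^2 = 0$); the signature term $\sign(W)$ vanishes because \eqref{eq:h4push} has isotropic image $N'_- \oplus N'_+$; and $\frac{5 \bar z^2 \pc_W}{12}$ is divisible by $4$ because $\bar z^2$ is twice the Poincaré dual of the K3 divisor, on which $\pc_W$ restricts to $p_\kd = -24$, so $\bar z^2 \pc_W = -48$. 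Thus modulo $\dM$ the only surviving term is $\frac{\wh\alpha^2}{8}$.

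Next I would pin down $\wh\alpha = \pc_W - dn$. The class $n \in H^4(W)$ is required to satisfy $dn \mapsto p_M$ under restriction to $M$; by Proposition \ref{prop:p1y} and the identification of the restriction map $H^4(W) \to H^4(M)$ with $Y$, this is the condition that $p_M = -Y(c_2(Z_+), c_2(Z_-))$, which is automatic. The point is that $\wh\alpha$ lies in the kernel of $H^4(W) \to H^4(M)$, which by Lemma \ref{lem:imy} is $N'_- \oplus N'_+ \subset N_+^* \oplus N_-^* \subset H^4(Z_+) \oplus_0 H^4(Z_-) \cong H^4(W)$. By definition of $d$ as the greatest divisor of $p_M$, we have $\pc_W \in N'_- \oplus N'_+ \pmod d$; so using Lemma \ref{lem:pmod24} (which expresses $\pc_W \bmod 24$, hence a fortiori $\bmod\, d$, in terms of $\bar c_2(Z_\pm)$) we can choose $x_\pm \in N_\pm$ with $\flat^\mp(x_\pm) = \bar c_2(Z_\mp) \bmod d$ — exactly the hypothesis of the theorem — and set $\wh\alpha = (\flat^+(x_-), \flat^-(x_+)) \in N'_- \oplus N'_+$, which represents $\pc_W$ modulo $d$, hence is a valid choice of $\wh\alpha$.

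Finally I would compute the self-intersection $\wh\alpha^2 = \int_W \wh\alpha^2$. Since $\wh\alpha$ vanishes on $M$, it is the image of a class in $H^4(W, M)$; the square is computed by pairing that relative lift against $\wh\alpha \in H^4(W)$ via the duality between $H^4(W, M)$ and $H^4(W)$. From the description of the map $H^4(W, M) \to H^4(W)$ in \eqref{eq:h4push}, the pre-image of $\wh\alpha = (\flat^+(x_-), \flat^-(x_+))$ in $N_+ \oplus N_- \subset H^4(W,M)$ is $(x_+, x_-)$; and since the $H^4(W) \cong H^4(Z_+) \oplus_0 H^4(Z_-)$ side pairs with the $H^4(W,M)$ side by the natural duality together with the off-diagonal swap in \eqref{eq:h4push}, one gets $\wh\alpha^2 = 2\, x_+ . x_-$. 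Substituting into $\frac{\wh\alpha^2}{8}$ yields $\frac{x_+ . x_-}{4}$, proving \eqref{eq:eek_claim}.

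The main obstacle I anticipate is bookkeeping rather than conceptual: one must be careful that the various identifications ($H^4(W) \cong H^4(Z_+)\oplus_0 H^4(Z_-)$, the duality with $H^4(W,M) \cong (H^2(Z_+)\oplus H^2(Z_-))/\bbz$, and the map \eqref{eq:h4push}) are used consistently, and in particular that the ``off-diagonal'' nature of \eqref{eq:h4push} — the swap $x_+ \leftrightarrow x_-$ — is what produces a genuine product $x_+ . x_-$ rather than $x_+^2 + x_-^2$ (the latter would vanish since each $N_\pm$ sits in its own $B_\pm$ with trivial intersection form). One must also check that $\wh\alpha^2 \in \Z$ makes sense (it does, as the square of a class vanishing on the boundary, as noted after \eqref{eq:ek_def}) and that the ambiguity in the choice of $x_\pm$ modulo the kernel of $\flat^\mp$ changes $x_+ . x_-$ only by a multiple of $d$, hence $\frac{x_+.x_-}{4}$ only by a multiple of $\frac{d}{4}$, which is $0$ in $\Z/\dM$ since $\dM \mid \Num{d/4} \mid \frac{d}{4}$ — so the formula is well-defined.
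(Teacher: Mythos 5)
Your argument is correct and is essentially the paper's own proof: the same coboundary $\cw$, the same vanishing of the $z^4$, signature and $\bar z^2\pc_\cw$ terms, the same choice of $\wh\alpha=(\flat^+(x_-),\flat^-(x_+))\in N'_-\oplus N'_+$ via Lemmas \ref{lem:pmod24} and \ref{lem:imy}, and the same computation $\wh\alpha^2=2\,x_+.x_-$ from the off-diagonal map \eqref{eq:h4push}. The only (harmless) looseness — shared with the paper — is the sign $\pc_\cw\equiv(-\bar c_2(Z_+),-\bar c_2(Z_-))$, which merely replaces $(x_+,x_-)$ by $(-x_+,-x_-)$ and leaves $x_+.x_-$ unchanged.
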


The elements $x_\pm$ must be even because $c_2(Z_\pm)$ is, so the RHS is indeed integral.

If $N_+ \perp N_-$, \ie if the matching used is perpendicular, then $d$ is the
greatest common divisor of $\bar c_2(Z_+)$ and $\bar c_2(Z_-)$, so we can
trivially take $x_\pm = 0$.
Indeed, $H^4(Z_+) \oplus_0 H^4(Z_-) \into H^4(M)$, so since there is no
torsion in $H^3(Z_\pm)$, there is a well-defined
$n := \frac{1}{d} \pc_W \in H^4(Z_+) \oplus_0 H^4(Z_-)$, and that choice
gives $\wh \alpha = 0$.

\begin{cor}
\label{cor:perp0}
$\EK(M) = 0$ for all twisted connected sums obtained by perpendicular matching.
\end{cor}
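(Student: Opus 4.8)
The plan is to read this off directly from Theorem \ref{thm:ek_tcs}, so the first step is to check that its hypothesis is met: the discussion following Theorem \ref{thm:tcs_H} records that a perpendicular matching forces $H^4(M)$ to be torsion-free, so the description of the generalised Eells--Kuiper invariant from \S\ref{subsec:EK_def} is valid and formula \eqref{eq:eek_claim} applies. (One should also note $M$ is a closed spin $7$-manifold, which is what \S\ref{subsec:EK_def} requires; this is part of the standing setup for twisted connected sums.)

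Next I would invoke Theorem \ref{thm:ek_tcs}: $\EK(M) = \tfrac{x_+.x_-}{4} \in \Z/\dM$, where $x_\pm \in N_\pm$ are any elements with $\flat^\mp(x_\pm) = \bar c_2(Z_\mp) \mmod d$. The key observation is that for a perpendicular matching the sublattices $N_+, N_- \subset L = H^2(\kd)$ are orthogonal with respect to the intersection form of $L$; equivalently, both homomorphisms $\flat^\pm$ of \eqref{eq:flat} vanish. Hence the defining condition on $x_\pm$ collapses to $\bar c_2(Z_\mp) \equiv 0 \mmod d$, which is precisely the assertion (already noted just before the corollary) that $d$ equals $\gcd$ of $\bar c_2(Z_+)$ and $\bar c_2(Z_-)$ when the matching is perpendicular --- a fact that follows from Lemma \ref{lem:imy}, Proposition \ref{prop:p1y} and orthogonality of $N_\pm$. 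Thus $x_\pm = 0$ is an admissible choice, $x_+.x_- = 0$, and $\EK(M) = 0$.

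Equivalently --- and this is really the same point --- since $N_+ \perp N_-$ the product $x_+.x_-$ vanishes for \emph{every} admissible pair $(x_+,x_-)$, not just for the zero pair, which is consistent with $\EK(M)$ being independent of the choices. There is essentially no obstacle here once Theorem \ref{thm:ek_tcs} is in hand; the only point requiring a moment's care is confirming that the greatest divisor $d$ of $p_M$ genuinely equals $\gcd(\bar c_2(Z_+),\bar c_2(Z_-))$ in the perpendicular case, so that $x_\pm = 0$ is legitimately allowed as the required lift.
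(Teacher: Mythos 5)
Your argument is correct and is essentially the paper's own: the paper likewise observes that for a perpendicular matching $d = \gcd(\bar c_2(Z_+),\bar c_2(Z_-))$, so $x_\pm = 0$ is an admissible choice in Theorem \ref{thm:ek_tcs} (equivalently $\wh\alpha = 0$), giving $\EK(M)=0$. Your extra remarks — that $H^4(M)$ is torsion-free in the perpendicular case so the formula applies, and that orthogonality of $N_\pm$ makes $x_+.x_-$ vanish for every admissible pair — are accurate refinements of the same argument.
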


But if neither of $c_2(Z_\pm)$ are divisible by 4, then
$x_+.x_-$ can be $4 \mmod 8$. For non-perpendicular matchings that can happen
even when $d$ is divisible by 8, in which case $\EK(M)$ is non-zero.

\subsection{Quadratic refinement of the torsion linking form}

Although in this paper we only consider examples of twisted connected
sums with no torsion in $H^4(M)$, let us briefly
use the coboundary $W$ to
analyse in general the torsion linking form of $M$
and its quadratic refinement (\cf \cite[Definition~2.23]{7class}).

Since the %
blocks $Z_\pm$ are assumed to have no torsion in
$H^3(Z_\pm)$, $H^4(W)$ resolves all the torsion in~$H^4(M)$.
\cite[Remark 4.12]{g2m} claims that the two summands $\Tor N_\pm^*/N'_\mp$
of $\Tor H^4(M)$ in \eqref{eq:h4tor} are isotropic for the
torsion linking form, and naturally dual. This can be checked by
considering cocycles representing the classes, but it is nicer to do it by
computing cup products on~$W$. Indeed, this way we can also compute the
refinement of the linking form.
Because $\pc_W$ is even by Remark~\ref{rmk:even}, we can characterise the
quadratic linking family on $\Tor H^4(M)$ as follows: it assigns to the image
in $H^4(M)$ of (the unique) $\half\pc_W$ the discriminant form $q$ of the even
lattice $H^4(\cw,M)/\mathrm{rad}$.

\begin{prop}
$\Tor N_\pm^*/N'_\mp \subseteq \Tor H^4(M)$ is isotropic for the torsion
linking form, and Lagrangian for $q$.
\end{prop}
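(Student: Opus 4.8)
The plan is to work entirely on the coboundary $W$, using the intersection-form data already assembled. Recall that $H^4(W,M)/\mathrm{rad}$ is the even lattice determined by \eqref{eq:h4push}, whose underlying map is $(x_+,x_-)\mapsto(\flat^+(x_-),\flat^-(x_+))$ from $N_+\oplus N_-$ to $N_+^*\oplus N_-^*$, with image $N_-'\oplus N_+'$; the radical is the kernel of this map. The first step is to identify, inside $H^4(W,M)/\mathrm{rad}$, the sublattices whose images under $H^4(W,M)\to H^4(W)\to H^4(M)$ land in the two torsion summands $\Tor N_+^*/N'_-$ and $\Tor N_-^*/N'_+$ of \eqref{eq:h4tor}. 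Because the whole pairing $H^4(W)\times H^4(W,M)\to\Z$ is the tautological duality between $H^4(Z_+)\oplus_0 H^4(Z_-)$ and $(H^2(Z_+)\oplus H^2(Z_-))/\Z$, and because $Y$ has kernel exactly $N_-'\oplus N_+'$, the map $H^4(W,M)\to\Tor H^4(M)$ is visibly block-diagonal with respect to the $N_+$/$N_-$ splitting; so the preimage of $\Tor N_+^*/N'_-$ comes from the $N_-$ summand of $N_+\oplus N_-$ (via $x_-\mapsto\flat^+(x_-)\in N_+^*$) and the preimage of $\Tor N_-^*/N'_+$ from the $N_+$ summand. Since \eqref{eq:h4push} has no diagonal terms, each of these summands pairs trivially with itself on $W$; hence each of $\Tor N_\pm^*/N'_\mp$ is isotropic for the torsion linking form of $M$. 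This reproves the isotropy claim of \cite[Remark 4.12]{g2m} cleanly, and gives half the proposition.

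For the Lagrangian assertion, I would first note that "isotropic" already follows from the above, so the content is that each summand is \emph{maximal} isotropic for the quadratic refinement $q$, equivalently that it is half-dimensional and self-annihilating. The duality statement ``$\Tor N_+^*/N'_-$ and $\Tor N_-^*/N'_+$ are naturally dual'' — again from \cite[Remark 4.12]{g2m}, and re-provable here by observing that the off-diagonal block of \eqref{eq:h4push} is exactly $\flat^+\oplus\flat^-$, so the nondegenerate torsion linking pairing restricts to a perfect pairing between the two summands — shows that $\Tor H^4(M)$ is the orthogonal sum-of-a-hyperbolic-type decomposition of two isotropic pieces in perfect duality, so each is automatically a Lagrangian \emph{subgroup} for the linking form. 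The remaining point is that this persists for the quadratic refinement $q$, i.e. that $q$ vanishes on each summand and not merely the associated bilinear form. This is where I would invoke the characterisation recalled just before the proposition: because $\pc_W$ is even (Remark \ref{rmk:even}), $q$ is the discriminant form of the even lattice $H^4(W,M)/\mathrm{rad}$, evaluated after translating by the image of $\half\pc_W$. So I must check that $\half\pc_W$, pushed into $H^4(M)$, lies in (the subgroup generated by) each Lagrangian, or rather that adding it does not spoil the vanishing of $q$ on each summand.

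Concretely: on $H^4(W,M)/\mathrm{rad}$, $q$ is genuinely a $\Z$-valued quadratic form (half the intersection form, which is even), and its reduction to the discriminant group is what appears on $\Tor H^4(M)$. The preimage of $\Tor N_+^*/N'_-$ in $H^4(W,M)/\mathrm{rad}$ is spanned by elements $(0,x_-)$ with $x_-\in N_-$ such that $\flat^+(x_-)$ is torsion-divisible appropriately; on any such pair the self-intersection is $2\cdot(0)\cdot(x_-) = 0$ by the no-diagonal-terms observation \eqref{eq:h4push}, so $q$ already vanishes integrally on a set of coset representatives, hence vanishes on the summand in the discriminant group. The one genuinely delicate step — and the main obstacle — is bookkeeping the shift by $\half\pc_W$: one needs that $\pc_W$ itself, which by Lemma \ref{lem:pmod24} is the image of $(-\bar c_2(Z_+),-\bar c_2(Z_-))\in(N_+^*\oplus N_-^*)\otimes\Z/24$, is represented in $H^4(W,M)/\mathrm{rad}$ by a class of the form $(\flat^+(y_-),\flat^-(y_+))$ with $y_\pm\in N_\pm$ even (as in Theorem \ref{thm:ek_tcs}), so that its halves lie in the respective summands and shifting by $\half\pc_W$ preserves each Lagrangian and the vanishing of $q$ on it. Granting that — which is exactly the evenness of $c_2(Z_\pm)$ used elsewhere in the section — the proposition follows, and I would present it in precisely this order: isotropy from the block structure of \eqref{eq:h4push}; duality of the two summands from the off-diagonal block being $\flat^+\oplus\flat^-$; hence Lagrangian for the linking form; then upgrade to $q$ using evenness of $\pc_W$ and of $c_2(Z_\pm)$ to control the $\half\pc_W$ shift.
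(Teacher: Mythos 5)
Your core computation is exactly the paper's: a torsion class $c \in \Tor N_\pm^*/N'_\mp$ lifts to a class $(\alpha,0)$ (resp.\ $(0,\alpha)$) in $N_+^*\oplus N_-^*\subset H^4(\cw)$ with $\alpha$ in the rational span of $N'_\mp$; this is the image under \eqref{eq:h4push} of a rational class supported in the other summand of $N_+\oplus N_-$, and the absence of diagonal terms in \eqref{eq:h4push} makes the relevant pairing vanish, so the discriminant form vanishes on each summand of \eqref{eq:h4tor}; together with the perfect duality between the two summands (the off-diagonal blocks being $\flat^\pm$) this gives isotropy and maximality. So the first three steps of your outline are precisely the published argument, stated in slightly more detail.

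Your fourth step --- the ``shift by $\half\pc_W$'' --- rests on a misreading of the sentence preceding the proposition. What is asserted there is that the quadratic linking family \emph{assigns to} the image of $\half\pc_W$ \emph{the} discriminant form $q$ of the even lattice $H^4(\cw,M)/\mathrm{rad}$: the evenness of $\pc_W$ (Remark \ref{rmk:even}) is used only to know that this lattice is even and that $\half\pc_W$ exists, i.e.\ to single out which member of the family $q$ is. There is no further translation by $\half\pc_W$ to perform when evaluating $q$, so the vanishing of the discriminant form on each summand already finishes the proof. Moreover the bookkeeping you propose cannot be carried out as stated: $\pc_W$ restricts to $p_M$ on $M$, which is in general non-torsion (in all the examples of the paper $H^4(M)$ is torsion-free with $p_M\neq 0$), so $\pc_W$ does not lift to $H^4(\cw,M)$ and cannot be ``represented in $H^4(\cw,M)/\mathrm{rad}$'' by a class of the form $(\flat^+(y_-),\flat^-(y_+))$; only its reduction mod $d$ (or mod $24$) lies in $N'_-\oplus N'_+$, which is what Theorem \ref{thm:ek_tcs} exploits. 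Since this extra step is unnecessary, its failure does no damage --- but it should be deleted rather than ``granted''.
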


\begin{proof}
We can write any $c \in \Tor N_+^*/N'_- \subset H^4(M)$
as the restriction of an element of the form 
$(\alpha,0) \in N_+^* \oplus N_-^* \subset H^4(W)$, where $\alpha$ is in the
rational span of $N_-'$. Then $(\alpha, 0)$ is the image under
\eqref{eq:h4push} of some $(0, y) \in (N_+ \oplus N_-) \otimes \bbq$.
This pairs trivially with $(\alpha, 0)$, so $q(c) = 0$.
\end{proof}

\newcommand{\firstblocks}{
\begin{table}[tb]
\vspace{-4mm}
\[
\begin{array}[t]{l c  c  c  c  c} \toprule
Y & r & -K_Y^3 & b_3(Y) & b_3(Z) & \pi_! c_2(Z) \\ \midrule
\PP^3 &  4 & 4^3         &   0 &  66 &  22 \\ 
    Q &  3 & 3^3 \cdot 2 &   0 &  56 &  26 \\ 
  V_1 &  2 & 2^3         &  42 &  52 &  16 \\ 
  V_2 &  2 & 2^3 \cdot 2 &  20 &  38 &  20 \\ 
  V_3 &  2 & 2^3 \cdot 3 &  10 &  36 &  24 \\ 
  V_4 &  2 & 2^3 \cdot 4 &   4 &  38 &  28 \\ 
  V_5 &  2 & 2^3 \cdot 5 &   0 &  42 &  32 \\ 
      &  1 &           2 & 104 & 108 &  26 \\ 
      &  1 &           4 &  60 &  66 &  28 \\ 
      &  1 &           6 &  40 &  48 &  30 \\ 
      &  1 &           8 &  28 &  38 &  32 \\ 
      &  1 &          10 &  20 &  32 &  34 \\ 
      &  1 &          12 &  14 &  28 &  36 \\ 
      &  1 &          14 &  10 &  26 &  38 \\ 
      &  1 &          16 &   6 &  24 &  40 \\ 
      &  1 &          18 &   4 &  24 &  42 \\ 
      &  1 &          22 &   0 &  24 &  46 \\ 
\bottomrule
\end{array} 
\]
\smallskip
\caption{Rank 1 Fano blocks}
\label{table:species1}
\vspace{-4mm plus 20mm}
\end{table}}

\section{Fano-type blocks}
\label{sec:blocks}

We now describe how to construct building blocks for twisted connected sums
from closed Fano 3-folds, and list detailed data for blocks obtained from
Fanos of Picard rank 1 or 2.

\subsection{Construction of building blocks from Fanos}
\label{subsec:block_from_fano}

Recall that a Fano 3-fold is a smooth closed complex 3-fold $Y$ whose
anticanonical bundle $-K_Y$ is ample, or equivalently, $c_1(Y)$ can be
represented by a Kähler form.
Any Fano 3-fold is simply-connected with $H^{2,0}(Y) = 0$,
so $\Pic Y \cong H^2(Y)$.
Together with a natural form, this is a key deformation invariant of $Y$.

\begin{defn}
\label{def:acform}
For a closed complex 3-fold $Y$, the \emph{anticanonical form} on $H^2(Y)$
is the symmetric bilinear form $(D_1, D_2) \mapsto D_1 \cdot D_2 \cdot (-K_Y)$
(where $\cdot$ is the cup product on $H^2(Y)$).

If $\Pic Y \cong H^2(Y)$ we call $\Pic Y$ equipped with the anticanonical form
the \emph{Picard lattice} of $Y$.
\end{defn}

There are 105 deformation types of Fano 3-folds.
Except for two of those, any Fano 3-fold $Y$ has a pencil
in $|{-}K_Y|$ with smooth base locus $C$.

\begin{constr}
\label{constr:block}
Given a Fano 3-fold~$Y$ and $C \subset Y$ a smooth curve that is the base locus
of an anticanonical pencil, 
let $Z$ be the blow-up of $Y$ in $C$.
If $\kd \subset Z$ is the proper transform of a smooth element of said pencil,
then we call $(Z, \kd)$ a \emph{Fano-type building block}.
\end{constr}

\begin{prop}
[{\cite[Propositions 4.24 and 5.7]{cym},
\cf \cite[Proposition 3.17]{g2m}}]
\label{prop:block}
$(Z, \kd)$ of Construction \ref{constr:block} is indeed a
building block. Further
\begin{enumerate}
\item \label{it:polar}
the image of $H^2(Z) \to H^2(\kd)$ is isomorphic to
$H^2(Y)$, and the kernel is generated by $[\kd]$;
\item \label{it:cones}
the image in $H^{1,1}(\kd)$ of the Kähler cone of $Z$ contains the image
of the Kähler cone of $Y$.
\end{enumerate}
\end{prop}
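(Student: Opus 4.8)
The plan is to verify the four building-block axioms directly from the blow-up geometry, and then to read assertions \ref{it:polar} and \ref{it:cones} off the resulting description of $H^2(Z)\to H^2(\kd)$. Write $\pi\colon Z\to Y$ for the blow-up, with exceptional divisor $E$ (a $\PP^1$-bundle over $C$), let $\bar\kd\in|{-}K_Y|$ be a \emph{smooth} member of the pencil, and let $\kd\subset Z$ be its proper transform; since $C$ is a Cartier divisor in the smooth surface $\bar\kd$, blowing it up does nothing, so $\pi$ restricts to an isomorphism $\kd\xrightarrow{\sim}\bar\kd$, which I use to identify $H^2(\kd)$ with $H^2(\bar\kd)$. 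By Lefschetz and adjunction $\kd$ is a K3 surface; the discrepancy formula gives $-K_Z=\pi^\star(-K_Y)-E$, and since $\bar\kd$ contains $C$ with multiplicity one, $\kd=\pi^\star\bar\kd-E\in|{-}K_Z|$. In the decomposition $H^2(Z)=\pi^\star H^2(Y)\oplus\bbz E$ the $E$-coefficient of $-K_Z$ is $-1$, giving axiom (i). The anticanonical pencil has base locus exactly $C$, so its proper transforms form a base-point-free pencil, \ie a projective morphism $f\colon Z\to\PP^1$ with $f^\star\calo(1)=\calo_Z(-K_Z)$ whose fibre over $\infty$ is the reduced divisor $\kd$; this gives axiom (ii). For axiom (iv), the blow-up formula $H^k(Z)\cong H^k(Y)\oplus H^{k-2}(C)$ reduces torsion-freeness of $H^3(Z)$ to that of $H^3(Y)$ (standard for smooth Fano $3$-folds) and of $H^1(C)$ (automatic for a smooth curve), and $H^4(Z)$ is then torsion-free by Poincaré duality.

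For the restriction map, observe that $(\pi^\star D)|_\kd=D|_{\bar\kd}$ for $D\in H^2(Y)$, while $E|_\kd$ is the class of $\kd\cap E$, which under the identification $\kd\cong\bar\kd$ is the curve $C$; and $[C]=(-K_Y)|_{\bar\kd}$ because $C$ is cut out on $\bar\kd$ by a second anticanonical divisor in the pencil. Hence $E|_\kd$ already lies in the image $N$ of the restriction $r\colon H^2(Y)\to H^2(\kd)$, and the image of $H^2(Z)\to H^2(\kd)$ equals $N=r(H^2(Y))$. (When $C$ is disconnected one must also check that the classes of the individual components of $E$ restrict into $r(H^2(Y))$; this is part of \cite[Proposition~4.24]{cym}.) Since $r$ is injective by the Lefschetz hyperplane theorem, $N\cong H^2(Y)$, and the kernel of $H^2(Z)\to H^2(\kd)$ is $\bbz(\pi^\star K_Y+E)=\bbz[K_Z]=\bbz[\kd]$; this is assertion \ref{it:polar}.

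For primitivity (axiom (iii)) I would use the cohomology exact sequence of the pair $(Y,\bar\kd)$: because $H^3(\bar\kd)=0$, the connecting map embeds $H^2(\kd)/N=\operatorname{coker}r$ into $H^3(Y,\bar\kd)$, and excision together with Poincaré–Lefschetz duality identifies $H^3(Y,\bar\kd)\cong H_3(Y\setminus\bar\kd)$. Now $Y\setminus\bar\kd$ is a smooth affine $3$-fold, since $-K_Y$ is ample and $\bar\kd$ is an irreducible anticanonical divisor; hence it has the homotopy type of a CW complex of real dimension $\le 3$, so $H_3(Y\setminus\bar\kd)$ is free, and therefore $H^2(\kd)/N$ is torsion-free.

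Finally, for assertion \ref{it:cones}: given a Kähler class $\omega$ on $Y$, pick $\varepsilon>0$ small enough that $\pi^\star\omega-\varepsilon E$ is Kähler on $Z$ (standard for a blow-up along a smooth centre). Then $\pi^\star\bigl(\omega+\varepsilon(-K_Y)\bigr)-\varepsilon E=(\pi^\star\omega-\varepsilon E)+\varepsilon\,\pi^\star(-K_Y)$ is again Kähler, being a Kähler class plus a nef class, and under $\kd\cong\bar\kd$ its restriction is $\omega|_{\bar\kd}+\varepsilon(-K_Y)|_{\bar\kd}-\varepsilon[C]=\omega|_{\bar\kd}$, using again $[C]=(-K_Y)|_{\bar\kd}$. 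So the image of the Kähler cone of $Y$ in $H^2(\kd)$ is contained in the image of the Kähler cone of $Z$. The statement is essentially a recollection from \cite{cym,g2m}, so there is no serious obstacle; the points that do the work are the bookkeeping identity $E|_\kd=[C]=(-K_Y)|_{\bar\kd}$ — which is what reduces both \ref{it:polar} and axiom (iii) to statements about $Y$, and which needs care when $C$ is disconnected — and, for axiom (iii), the fact that the affine $3$-fold $Y\setminus\bar\kd$ is homotopy equivalent to a $3$-complex.
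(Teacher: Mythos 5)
Your argument is correct; note that the paper itself offers no proof of this proposition, simply citing \cite{cym} (Propositions 4.24 and 5.7), and your reconstruction follows the same standard lines as those cited arguments: the blow-up description $H^2(Z)=\pi^\star H^2(Y)\oplus\bbz E$ together with the key identity $E|_\kd=[C]=(-K_Y)|_{\bar\kd}$, Lefschetz injectivity of $H^2(Y)\to H^2(\bar\kd)$, and torsion-freeness of $H^2(\kd)/N$ via the pair sequence and the fact that the affine complement $Y\setminus\bar\kd$ has the homotopy type of a $3$-complex. (Your caveat about $C$ being disconnected is vacuous here, since the smooth base locus is a complete intersection of two ample divisors and hence connected.)
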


Some of the important data
of the block can be read off immediately from well-known data about $Y$.
For instance \ref{it:polar} implies that the polarising lattice of the
block is isometric to the Picard lattice of $Y$,
and (\cf \cite[Lemma 5.6]{cym})
\begin{equation}
\label{eq:b3}
b_3(Z) = -K_Y^3 + b_3(Y) + 2 .
\end{equation}
Meanwhile, the second Chern class can be described as follows.
The pull-back $\pi^* : H^2(Y) \to H^2(Z)$ of the blow-up map $\pi : Z \to Y$
is injective, and $H^2(Z)$ is the direct sum of $\pi^* H^2(Y)$ and
$\Z c_1(Z)$.
Thus an element of $H^4(Z)$ can be characterised in terms of its product with
$c_1(Z)$ and its image under the Poincar\'e adjoint
$\pi_! : H^4(Z) \to H^4(Y)$ of $\pi^*$, \ie the map characterised by
equality of the intersection products $(\pi_! x)y$ and $x(\pi^*y) \in \Z$
for any $x \in H^4(Z)$ and $y \in H^2(Y)$.
We have $c_2(Z)c_1(Z) = 24$ and
$c_1(Z)^2 = 0$ for any building block, so
for a Fano-type block $c_2(Z)$ is completely determined by
the following result.

\begin{lem}[{\cite[(5-13)]{cym}}]
\label{lem:c2blowup}
For any complex 3-fold $Y$, $C \subset Y$ a smooth curve contained in a
smooth anticanonical divisor and $\pi : Z \to Y$ the blow-up of $Y$ in $C$,
\begin{equation}
\pi_! (c_2(Z) + c_1(Z)^2) = c_2(Y) + c_1(Y)^2.
\end{equation}
\end{lem}

In view of Corollary \ref{cor:pmod24}, the main interesting 
information about $c_2(Z)$ is the mod 24 reduction
$\bar c_2(Z) \in N^* \otimes \Z/24$, which we can thus think of simply as
\begin{equation}
\label{eq:c2blowup}
\bar c_2(Z) = \pi_!c_2(Z) = c_2(Y) + c_1(Y)^2 \mmod 24 .
\end{equation}
When we tabulate data for the %
blocks, we will record $\pi_! c_2(Z)$
rather than $\bar c_2(Z)$ for completeness.

Note that Definition \ref{def:match} makes sense for Fano 3-folds as well
as building blocks.  Proposition \ref{prop:block}\ref{it:cones} %
implies that a matching
of a pair of Fano 3-folds gives rise to a matching
of the resulting Fano-type blocks. We use the phrase
\emph{twisted connected sum of Fanos} (\eg in the introduction) to mean the
twisted connected sum that arises from such a matching of Fano-type blocks.

\firstblocks

\subsection{Rank 1 blocks}

Table \ref{table:species1} summarises the key data of Fano 3-folds of rank 1
and the resulting building blocks (\cf \cite[Table 1]{cym}).
The data included is the index $r$ (\ie the
largest integer such that $-K_Y = rH$ for some $H \in \Pic Y$),
the anticanonical degree $-K_Y^3$, $b_3(Y)$, $b_3(Z)$, and
the pairing of $\pi_!c_2(Z)$ and the positive generator $H \in \Pic Y$
(equivalently, the product of $c_2(Z)$ and $\pi^* H$).

$b_3(Z)$ is simply obtained from the preceding data by \eqref{eq:b3}.
In the rank 1 case, $\pi_! c_2(Z)$ is also easily determined as follows:
For any Fano one has $c_2(Y)(-K_Y) = 24$, so if $-K_Y = rH$
then \eqref{eq:c2blowup} implies that
\begin{equation}
\label{eq:c2onH}
\pi_! c_2(Z)H = \frac{24-K_Y^3}{r} \mod 24 .
\end{equation}
$N$ is generated by the image of $H$. Its self-intersection (with respect to
the quadratic form on $N$) is not included in the table, but it is simply
$\frac{-K_Y^3}{r^2}$.

\subsection{The table of rank 2 Fano blocks}

\newcommand{\mmtable}{
\newlength{\rowskip}
\setlength{\rowskip}{0.3em}

\begin{table}[hp]
\[
\begin{array}[b]{c@{\hspace{8pt}}cccccccc@{\hspace{5pt}}crr} \toprule
\text{MM\#} & -K_Y^3& N & \Delta & -K_Y & \pi_! c_2(Z) & b_3(Y) & b_3(Z)
 & \pi_! c_2(Z)A & \; A^2 & B^2\; & h \;\;\; \\ 
\midrule
1 & 4 & \sm{0 & 1 \\ 1 & 2} & 1 & \cvec{1}{1} & * & 44 & * \\[\rowskip]
2 & 6 & \sm{0 & 2 \\ 2 & 2} & 4 & \cvec{1}{1} & \rvec{12}{18} & 40 & 48 & 30 & 6
& -6 & -0.58\\[\rowskip]
3 & 8 & \sm{0 & 2 \\ 2 & 4} & 4 & \cvec{1}{1} & \rvec{12}{20} & 22 & 32 \\[\rowskip]
4 & 10 & \sm{0 & 3 \\ 3 & 4} & 9 & \cvec{1}{1} & \rvec{12}{22} & 20 & 32 & 34 & 10 & -90 & -0.15 \\[\rowskip]
5 & 12 & \sm{0 & 3 \\ 3 & 6} & 9 & \cvec{1}{1} & \rvec{12}{24} & 12 & 26 & 36 & 12 & -12 & -0.42 \\[\rowskip]
6 & 12 & \sm{2 & 4 \\ 4 & 2} & 12 & \cvec{1}{1} & \rvec{18}{18} & 18 & 32 & 36 & 12 & -4 & 0 \\[\rowskip]
7 & 14 & \sm{0 & 4 \\ 4 & 6} & 16 & \cvec{1}{1} & \rvec{12}{26} & 10 & 26 & 38 & 14 & -56 & 0.19 \\[\rowskip]
8 & 14 & \sm{2 & 4 \\ 4 & 4} & 8 & \cvec{1}{1} & \rvec{18}{20} & 18 & 34 \\[\rowskip]
9 & 16 & \sm{2 & 5 \\ 5 & 4} & 17 & \cvec{1}{1} & \rvec{18}{22} & 10 & 28 & 40 & 16 & -272 & 0.09 \\[\rowskip]
10 & 16 & \sm{0 & 4 \\ 4 & 8} & 16 & \cvec{1}{1} & \rvec{12}{28} & 6 & 24 & 40 & 16 & -16 & 0 \\
&&&&&&&& 52 & 24 & -24 & -0.58 \\[\rowskip]
11 & 18 & \sm{2 & 5 \\ 5 & 6} & 13 & \cvec{1}{1} & \rvec{18}{24} & 10 & 30 & 42 & 18 & -234 & -0.47 \\[\rowskip]
12 & 20 & \sm{4 & 6 \\ 6 & 4} & 20 & \cvec{1}{1} & \rvec{22}{22} & 6 & 28 & 44 & 20 & -4 & 0 \\[\rowskip]
13 & 20 & \sm{2 & 6 \\ 6 & 6} & 24 & \cvec{1}{1} & \rvec{18}{26} & 4 & 26 & 44 & 20 & -30 & 0.26 \\[\rowskip]
14 & 20 & \sm{0 & 5 \\ 5 & 10} & 25 & \cvec{1}{1} & \rvec{12}{32} & 2 & 24 & 44 & 20 & -20 & 0.32 \\
&&&&&&&& 56 & 30 & -30 & -0.26 \\
&&&&&&&& 68 & 40 & -40 & -0.68 \\[\rowskip]
15 & 22 & \sm{6 & 6 \\ 6 & 4} & 12 & \cvec{1}{1} & \rvec{24}{22} & 8 & 32 \\[\rowskip]
16 & 22 & \sm{2 & 6 \\ 6 & 8} & 20 & \cvec{1}{1} & \rvec{18}{28} & 4 & 28 & 46 & 22 & -110 & -0.14 \\[\rowskip]
17 & 24 & \sm{4 & 7 \\ 7 & 6} & 25 & \cvec{1}{1} & \rvec{22}{26} & 2 & 28 & 48 & 24 & -600 & 0.06 \\[\rowskip]
18 & 24 & \sm{0 & 4 \\ 4 & 2} & 16 & \cvec{1}{2} & \rvec{12}{18} & 4 & 30 & 30 & 10 & -40 & 0.68 \\ 
&&&&&&&& 42 & 18 & -72 & -0.17 \\
&&&&&&&& 54 & 24 & -6 & -0.58 \\[\rowskip]
19 & 26 & \sm{4 & 7 \\ 7 & 8} & 17 & \cvec{1}{1} & \rvec{22}{28} & 4 & 32 & 50 & 26 & -442 & -0.61 \\[\rowskip]
20 & 26 & \sm{2 & 7 \\ 7 & 10} & 29 & \cvec{1}{1} & \rvec{18}{32} & 0 & 28 & 50 & 26 & -754 & 0.16 \\[\rowskip]
21 & 28 & \sm{6 & 8 \\ 8 & 6} & 28 & \cvec{1}{1} & \rvec{26}{26} & 0 & 30 & 52 & 28 & -4 & 0 \\[\rowskip]
22 & 30 & \sm{4 & 8 \\ 8 & 10} & 24 & \cvec{1}{1} & \rvec{22}{32} & 0 & 32 & 54 & 30 & -20 & -0.32 \\[\rowskip]
23 & 30 & \sm{8 & 8 \\ 8 & 6} & 16 & \cvec{1}{1} & \rvec{28}{26} & 2 & 34 \\[\rowskip]
24 & 30 & \sm{2 & 5 \\ 5 & 2} & 21 & \cvec{2}{1} & \rvec{18}{18} & 0 & 32 & 36 & 14 & -6 & 0.58 \\
&&&&&&&& 54 & 30 & -70 & -0.51 \\[\rowskip]
25 & 32 & \sm{0 & 4 \\ 4 & 4} & 16 & \cvec{1}{2} & \rvec{12}{22} & 2 & 36 & 34 & 12 & -12 & 0.42 \\
&&&&&&&& 46 & 20 & -20 & -0.32 \\[\rowskip]
26 & 34 & \sm{6 & 9 \\ 9 & 10} & 21 & \cvec{1}{1} & \rvec{26}{32} & 0 & 36 \\[\rowskip]
27 & 38 & \sm{2 & 5 \\ 5 & 4} & 17 & \cvec{1}{2} & \rvec{18}{22} & 0 & 40 & 40 & 16 & -272 & 0.09 \\[\rowskip] 
28 & 40 & \sm{18 & 9 \\ 9 & 4} & 9 & \cvec{1}{1} & \rvec{42}{22} & 2 & 44 \\[\rowskip]
29 & 40 & \sm{0 & 4 \\ 4 & 6} & 16 & \cvec{1}{2} & \rvec{12}{26} & 0 & 42 & 38 & 14 & -56 & 0.19 \\[\rowskip]
30 & 46 & \sm{6 & 6 \\ 6 & 4} & 12 & \cvec{1}{2} & \rvec{26}{22} & 0 & 48 \\[\rowskip]
31 & 46 & \sm{2 & 5 \\ 5 & 6} & 13 & \cvec{1}{2} & \rvec{18}{26} & 0 & 48 & 44 & 18 & -234 & -0.47 \\[\rowskip]
32 & 48 & \sm{2 & 4 \\ 4 & 2} & 12 & \cvec{2}{2} & \rvec{18}{18} & 0 & 50 & 36 & 12 & -4 & 0\\[\rowskip]
33 & 54 & \sm{0 & 3 \\ 3 & 4} & 9 & \cvec{1}{3} & \rvec{12}{22} & 0 & 56 & 34 & 10 & -90 & -0.15 \\[\rowskip]
34 & 54 & \sm{0 & 3 \\ 3 & 2} & 9 & \cvec{2}{3} & \rvec{12}{18} & 0 & 56 & 30 & 8 & -72 & 0.17\\
&&&&&&&& 42 & 14 & -126 & -0.64 \\[\rowskip]
35 & 56 & \sm{2 & 4 \\ 4 & 4} & 8 & \cvec{2}{2} & \rvec{18}{22} & 0 & 58 \\[\rowskip]
36 & 62 & \sm{2 & 5 \\ 5 & 10} & 5 & \cvec{1}{2} & \rvec{18}{34} & 0 & 64 \\[\rowskip]
\bottomrule
\end{array}
\]
\vspace{-4mm}
\caption{Rank 2 Fano blocks}
\label{table:blocks}
\end{table}}

In Table \ref{table:blocks} we collect the following data for building blocks
obtained from rank 2 Fano 3-folds $Y$.
\begin{itemize}
\item The number of the corresponding entry in the Mori-Mukai list of rank 2
Fanos.
\item The anticanonical degree $-K_Y^3$ of the Fano $Y$.
\item The quadratic form of the Picard lattice $N$ presented with respect to a
basis $\bl$, $\bm$ that spans the nef cone (\ie the ample classes are exactly
the linear combinations of $\bl$ and $\bm$ where both coefficients are
positive); we do not know a general reason why the extremal rays should
generate all of $N$ and not just a finite index sublattice, but it does for all
entries in the list.
\item The absolute value $\Delta$ of the discriminant of the quadratic form
on $N$.
\item The anticanonical class $-K_Y$ in terms of the basis $\bl$, $\bm$.
\item
The element $\pi_!c_2(Z) = c_2(Y) + c_1(Y)^2 \in N^*$ (whose mod 24
reduction is $\bar c_2(Z) \in N^* \otimes \Z/24$ defined
in \eqref{eq:c2bar}) presented in the dual basis of $\bl$, $\bm$.
In other words, the entries of the row vector are the pairing of
$c_2(Z)$ with $\pi^*\bl$ and $\pi^*\bm$.
\item The third Betti number $b_3(Y)$ of the Fano.
\item The third Betti number $b_3(Z)$ of the resulting block.
\end{itemize}

\mmtable

The final four columns include data relevant for non-perpendicular matching,
about ample $A \in N$ such that $A^2$ is not too large compared with $\Delta$.
In view of Lemmas \ref{lem:orth} and \ref{lem:handarith}, non-perpendicular
matchings of a pair of rank 2 Fanos $Y_+, Y_-$ are only possible if
\begin{equation}
\label{eq:discr}
\log_2 \frac{\Delta_+}{A_+^2}+  \log_2\frac{\Delta_-}{A_-^2}\geq 0 .
\end{equation}
The largest ratio $\frac{\Delta}{A^2}$ occurs for \#18, which has $\Delta = 16$
and an ample $A$ with $A^2 = 10$.
Accordingly, we list all ample classes with $A^2 \leq 1.6\Delta$.
(In some examples there is more than one such class. We do not write out $A$
itself in terms of the basis $G, H$, but there is never any ambiguity.)
For each such $A$ we list the following data.
\begin{itemize}
\item The result of evaluating $\pi_! c_2(Z)$ on $A$
(equivalently: $c_2(Z) \pi^*A$).
\item $A^2$, the product of $A$ with itself in the Picard lattice.
\item $B^2$, where $B$ is a generator for the orthogonal complement to $A$
in the Picard lattice.
\item $h := \log_2 \frac{\Delta}{A^2}$.
\end{itemize}

\pagebreak[2]

\subsection{Annotated Mori-Mukai list of rank 2 Fano 3-folds}
\label{subsec:MM}

We now indicate how the data in Table~\ref{table:blocks} is assembled.
The anticanonical degree $-K_Y^3$ and $b_3(Y)$ are taken from
Iskovskih-Prokhorov \mbox{\cite[Table 12.3]{iskovskih99}}, and
$b_3(Z)$ is obtained from \eqref{eq:b3} as before.
For the computation of the basis of the nef cone, the quadratic form on the
Picard lattice $N$ and $\pi_! c_2(Z)$ we divide the list (except the last two
entries) into three groups.

No fewer than 27 of the 36 classes of Fanos can be described explicitly as
blow-ups in a smooth curve of a rank 1 Fano $Y_0$ of index $r \geq 2$
(so $\Pic Y_0$ is generated by ${-}\frac{1}{r}K_{Y_0}$).
Then one edge of the nef cone of $Y$
is clearly generated by $\bm := \pi^*({-}\frac{1}{r}K_{Y_0})$.

The hypothesis of the next lemma can be read as an elementary 
formulation of
``$C$~is cut out scheme-theoretically by sections of~$L$'', \ie
the tensor product $\cali_C \otimes \call$ of the ideal sheaf
of~$C$ and the sheaf of sections of $L$ being globally generated.

\begin{lem}
\label{lem:cutout}
Let $L$ be a line bundle on a closed complex manifold $Y_0$, and let
$\pi : Y \to Y_0$ be the blow-up of a smooth curve $C$ in $Y_0$.
Let $E \subset Y$ be the exceptional divisor of $\pi$.

Suppose that for every trivialising neighbourhood $U \subset Y_0$ of $L$,
the ideal $\mkern1.4mu\cali_C(U) := \{ f %
: f_{|C\cap U} \equiv 0\}$ in the ring $\calo_{Y_0}(U) := \{ \textrm{holomorphic functions } U \to \C\}$
is generated by coordinate representatives of 
global sections of $L$ that vanish identically on $C$.
Then $\pi^*L - E$ is basepoint-free.
\end{lem}

\begin{proof}
Note that if $D \in |L|$ contains $C$, then $\pi^*D - E$ is effective
(if $D$ is smooth then this is simply the proper transform) and belongs
to $|\pi^*L - E|$. Therefore if $|\pi^*L - E|$ has a base
point $p \in Y \setminus E$, then any global section of $L$ vanishes at the
corresponding point $\pi(p) \in Y_0$. The hypothesis forces the contradiction
$\pi(p) \in C$.

Recall that $E$ can be identified with $\PP(N_{C/Y_0})$, the projectivisation
of the normal bundle of $C$.
If $|\pi^*L - E|$ has a basepoint $p \in E$, then that corresponds to a
non-zero normal vector $v$ to $C$ at $\pi(p) \in C$ such that $v$ is tangent to
every element of $|L|$ that contains $C$. Then the derivative of every local
defining function of $C$ near $\pi(p)$ vanishes on $v$, contradicting that $C$
is smooth.
\end{proof}

Returning to the setting of a blow-up in a rank 1 Fano $Y_0$,
Lemma \ref{lem:cutout} implies that if $C$ is cut out by sections of
${-}\frac{n}{r}K_{Y_0}$ then $\bl = \pi^*({-}\frac{n}{r}K_{Y_0}) - E$ is nef,
so the cone spanned by $\bl$ and $\bm$ is certainly contained in the nef cone
of $Y$.

\begin{lem}
\label{lem:nef}
If the blow-up $Y$ of $Y_0$ in $C$ is Fano, and $n$ is the minimal integer such $C$
is cut out by sections of ${-}\frac{n}{r}K_{Y_0}$, then
the nef cone of $Y$ is exactly the cone spanned by 
$\bl = \pi^*({-}\frac{n}{r}K_{Y_0}) - E$ and $\bm$.
\end{lem}

\begin{proof}
This can be verified for each of the 27 rank 2 Fano 3-folds of this type in
the classification, aided \eg by the descriptions of
Coates-Corti-Galkin-Kasprzyk \mbox{\cite[\S18--\S53]{coates16}}
of each Fano 3-fold as the zero locus of a general section of a vector bundle
(often a complete intersection) of a ``key variety'', a GIT quotient of a vector space (often a toric variety) whose nef cone is evident.
\end{proof}

Note that $Y$ being Fano means that $-K_Y = \bl + (r-n)\bm$ is in the
interior of the nef cone, corresponding to $1 \leq n \leq r-1$ in all cases.
Indeed, according to \cite[Corollary 7.1.2(iii)]{iskov77}, the blow-up
of a rank 1 Fano 3-fold $Y_0$ in $C$ is Fano if and only if $1 \leq n \leq r-1$;
this too is regarded as a consequence of the classification, similarly to how
we do not know any uniform proof of Lemma~\ref{lem:nef}.

Staying in the context fo Lemma \ref{lem:nef}, the matrix representing the
quadratic form on $N$ with respect to the basis $\bl$, $\bm$
can be computed from
\begin{align*}
\bm^2 \; &= \; \frac{(-K_{Y_0})^3}{r^2} , \\
(n \bm {-} \bl).\bm \; &= \; \deg C , \\
(n \bm {-} \bl)^2 \; &= \; - \chi(C) .
\end{align*}
We can apply \eqref{eq:c2onH} to read off $\pi_!c_2(Z)(-K_Y)$
from the other data in the table. To control the other half
of $\pi_! c_2(Z)$, we apply Lemma \ref{lem:c2blowup} to $\pi_0 : Y \to Y_0$
to deduce that the product of $c_2(Y) + c_1(Y)^2$ with $\pi_0^*(-K_{Y_0})$
equals the product of $c_2(Y_0) + c_1(Y_0)^2$ with $-K_{Y_0}$.
We can thus apply \eqref{eq:c2onH} again to obtain
\begin{equation}
\label{eq:c2venient}
\pi_! c_2(Z) H = \frac{24-K_{Y_0}^3}{r}  
\end{equation}
(and the RHS is contained in Table \ref{table:species1}).

Another four entries in the list are divisors in $\PP^2 \times \PP^2$
(including $\PP^1 \times \PP^2$). Then we can take the basis $\bl, \bm$ of $N$
to correspond to the restrictions of the hyperplane bundles from the two
factors.
For a divisor $Y \in |a\bl+b\bm|$, we can readily compute the quadratic form
on $N$ from $-K_Y = (3-a)\bl + (3-b)\bm$. Further,
$c(Y) = (1+3\bl + 3\bl^2)(1+3\bm+3\bm^2)(1+[Y])^{-1}$ gives
\begin{align*}
c_2(Y) + c_1(Y)^2
&= 3\bl^2 + 9\bl\bm + 3\bl^2 - (3\bl+3\bm)[Y] + [Y]^2 + (3\bl + 3\bm- [Y])^2 \\
&= 12\bl^2 + 27\bl\bm + 12\bm^2 -(9\bl + 9\bm)[Y] + 2[Y]^2 .
\end{align*}
To evaluate the product with $\bl$ we compute
\begin{equation}
\label{eq:c2divisor}
(c_2(Y) + c_1(Y)^2)[Y]\bl = 3((2a-3)(b-1)(b-2) + 6)\bl^2\bm^2,
\end{equation}
and the product with $\bm$ is analogous. %

Another 3 cases are branched double covers over other rank 2 Fanos,
$\dcov : Y \to X$.
Then we can take $G$ and $H$ to be pull-backs of the previously identified
edges of the nef cone in $\Pic X$.
If the branch locus is a smooth divisor in $|2L|$, then
$TY \oplus \dcov^*(2L) \cong \dcov^*(TX \oplus L)$ implies
$c_1(Y) = \dcov^*(c_1(X) - [L])$ and
$c_2(Y) = \dcov^*(c_2(X) -c_1(X)[L] + 2[L]^2)$, so
\begin{equation}
\label{eq:c2double}
c_2(Y) + c_1(Y)^2 = \dcov^*\left(c_2(X) + c_1(X)^2 + 3[L]([L] - c_1(X))\right) .
\end{equation}

For each entry $Y$ in the Mori-Mukai list, we repeat below the description
from \cite[Table 12.3]{iskovskih99}. For blow-ups of rank 1 Fanos in a smooth
curve $C$ we only indicate in addition the smallest integer $n$ such that
$C$ is cut out by sections of ${-}\frac{n}{r}K_{Y_0}$. For the remaining
9 cases we provide some additional explanation.

\begin{enumerate}[label=\protect\#\arabic*,widest=99,itemsep=1mm]
\item \label{mm:v1}
Blow-up of $V_1$ (degree 1 del Pezzo 3-fold, degree 6 hypersurface in
$\bbp^4(3,2,1,1,1)$) in an elliptic curve that is the intersection of
two divisors in $|{-}\half K_{V_1}|$ (\ie the hyperplane class). $n = 1$.

This is the only rank 2 Fano where the linear system $|{-}K_Y|$ is not free;
the base locus is the pre-image $\PP^1$ over the basepoint of
$|{-}\half K_{V_1}|$. It is therefore the only case where
Construction \ref{constr:block} does not produce an
associated ``Fano-type'' building block $Z$ (though one could define a
building block by blowing up in several steps \cite[Proposition 5.9]{cym}).

\item \label{mm:24branch}
Double cover of $\PP^1 \times \PP^2$ branched over $(2,4)$ divisor.
$\bl$ and $\bm$ are the pull-backs of $\calo_{\PP^1}(1)$ and $\calo_{\PP^2}(1)$
respectively.
Use \eqref{eq:c2double} (and result from \ref{mm:p1p2}) to compute
\[ c_2(Y) + c_1(Y)^2 = 18\bl \bm + 12\bm^2 +3(\bl + 2\bm)(-\bl-\bm)
= 9\bl \bm + 6\bm^2 , \]
and hence $\pi_! c_2(Z) = \rvec{12}{18}$.

\item Blow-up of $V_2$ (degree 2 del Pezzo 3-fold, a double cover of $\PP^3$
branched over a quartic hypersurface, or equivalently a degree 4 hypersurface
in $\PP^3(2, 1, 1, 1)$) in an elliptic curve that is the intersection of two
hyperplane divisors. $n = 1$.

\item Blow-up of $\PP^3$ along the intersection of two cubic hypersurfaces.
$n = 3$.

\item Blow-up of $V_3$ (cubic hypersurface of $\PP^4$) along the intersection
of two hyperplane divisors (a plane cubic curve). $n = 1$.

\item
Generic members of the family are (2,2) divisor in $\PP^2 \times \PP^2$,
but some are double cover of a $(1,1)$ divisor $W \subset \PP^2 \times \PP^2$
branched over smooth divisor $B \in |{-}K_W|$.
$\bl$ and $\bm$ are pull-backs of $\calo(1)$ from the two
$\PP^2$ factors, and $\pi_! c_2(Z)$ can be computed from \eqref{eq:c2divisor}
(or from \eqref{eq:c2double} and the result in \ref{mm:11div}).

\item Blow-up of $Q$ (quadric hypersurface in $\PP^4$) in intersection of
two sections by quadrics. $n = 2$.

\item Double cover of $V_7$ ($\PP^3$ blown up in a point, \ref{mm:p3onepoint})
whose branch locus is a divisor $B \in |{-}K_{V_7}|$ (for generic members of the
intersection $B \cap E$ with the exceptional divisor of $V_7 \to \PP^3$ is
smooth, but for some members of the family $B$ is reduced but not smooth).
$\bl$ and $\bm$ are the pull-backs of the respective
classes on $V_7$.

By \eqref{eq:c2double} we have
$c_2(Y) + c_1(Y)^2 =
\dcov^*\left(c_2(V_7) + c_1(V_7)^2 - 3 (-\half K_{V_7})^2)\right)$.
We compute in \ref{mm:p3onepoint} that $c_2(V_7) + c_1(V_7)^2 = \rvec{18}{22}$
in terms of the basis for $V_7$, and we can read off from the Picard lattice
that $(-\half K_{V_7})^2 = \rvec{3}{4}$. Hence
$c_2(Y) + c_1(Y)^2 = \dcov^*\!\rvec{9}{10}$, which in terms of the basis for
$\Pic Y$ is written as $\rvec{18}{20}$.

\item Blow-up of $\PP^3$ in a curve $C$ of degree 7 and genus 5, which is
an intersection of a two-parameter family of cubic hypersurfaces. $n = 3$.

\item Blow-up of $V_4$ (complete intersection of two quadrics in $\PP^5$)
in an elliptic curve that is the intersection of two hyperplane sections.
$n = 1$.

\item Blow-up of $V_3$ (cubic hypersurface in $\PP^4$) along a line. $n = 1$.

\item Blow-up of $\PP^3$ along a curve of degree 6 and genus 3 which is
an intersection of cubic hypersurfaces. $n = 3$.

\item Blow-up of $Q$ (quadric hypersurface in $\PP^4$) along a curve of
degree 6 and genus 2. %
$n = 2$.

\item Blow-up of $V_5$ (section of Plücker-embedded Grassmannian
$Gr(2,5) \subset \PP^9$ by a subspace of codimension 3)
in an elliptic curve that is the intersection of two hyperplane sections.
$n = 1$.

\pagebreak[2]

\item Blow-up of $\PP^3$ along the intersection of a quadric $A$ and a cubic
$B$ (for generic members of the family $A$ is smooth, but for some members of the family $A$ is reduced but not smooth). $n = 3$.

\item Blow-up of $V_4$ (complete intersection of two quadrics in $\PP^5$)
in a conic. $n = 1$.

\item Blow-up of $Q$ along an elliptic curve of degree 5. $n = 2$.

\item Double cover of $\PP^1 \times \PP^2$ branched over $(2,2)$ divisor.
Compute $\pi_! c_2(Z) = \rvec{12}{18}$ using \eqref{eq:c2double} like in
\ref{mm:24branch}.

\item Blow-up of $V_4$ along a line. $n = 1$.

\item Blow-up of $V_5$ along a twisted cubic. $n = 1$.

\item Blow-up of $Q$ along a twisted quartic (a rational degree 4 curve,
isomorphic to the image of
$(s : t) \mapsto (s^4 : s^3t : s^2t^2 : st^3 : t^4)$). $n = 2$.

\item Blow-up of $V_5$ along a conic. $n = 1$.

\item Blow-up of $Q$ along an intersection of two divisors $A \in |\calo_Q(1)|$
and $B \in |\calo_Q(2)|$ ($A$ may be smooth or singular). $n = 2$.

\item A $(1,2)$ divisor in $\PP^2 \times \PP^2$.
Compute $\pi_! c_2(Z)$ by applying \eqref{eq:c2divisor} with $a = 1, b = 2$. 

\item Blow-up of $\PP^3$ along an elliptic curve that is the intersection of
two quadrics. $n = 2$.

\item Blow-up of $V_5$ along a line. $n = 1$.

\item Blow-up of $\PP^3$ along a twisted cubic. $n = 2$.

\item Blow-up of $\PP^3$ along a plane cubic (an elliptic curve). $n = 3$.

\item Blow-up of $Q$ along a conic (complete intersection of two hyperplane
sections). $n = 1$.

\item Blow-up of $\PP^3$ along a conic. $n = 2$.

\item Blow-up of $Q$ along a line. $n = 1$.

\item \label{mm:11div}
A $(1,1)$-divisor on $\PP^2 \times \PP^2$.
Compute $\pi_! c_2(Z)$ by applying \eqref{eq:c2divisor}
with $a = b = 1$. 

\item Blow-up of $\PP^3$ along a line. $n = 1$.

\item \label{mm:p1p2}
$Y = \PP^1 \times \PP^2$. $\bl = \calo_{\PP^1}(1)$ and $\bm = \calo_{\PP^2}(1)$.
Compute $\pi_! c_2(Z) = \rvec{12}{18}$ by applying \eqref{eq:c2divisor}
with $a = 1$, $b = 0$. 

\item \label{mm:p3onepoint} $\PP^3$ blown up in one point, or equivalently
$\PP(\calo \oplus \calo(1))$ over $\PP^2$. $\bl$ is the proper transform of
a plane passing through the blow-up point, and $\bm$ is
a plane not passing through the blow-up point. The product of
$c_2(Y) + c_1(Y)^2$ with $\bm$ is 22, just as it is for $\PP^3$.

\item $\PP(\calo \oplus \calo(2))$ over $\PP^2$. $\bl$ is the pull-back of
$\calo_{\PP^2}(1)$, while $\bm$ is the dual of the tautological bundle.
The intersection form on $H^2(Y)$ is given by $\bl^3 = 0$, $\bl^2\bm = 1$,
$\bl\bm^2 = 2$, $\bm^3 = 4$
(use that the section $\PP(\calo(2))$ is a divisor representing $\bm$).
Because $TY$ is stably isomorphic to
$f^*T\PP^2 \oplus \left(\bm \otimes f^*(\calo \oplus \calo(-2))\right)$,
where $f : Y \to \PP^2$ is the fibration, we find that
\[ c(Y) = (1 + 3\bl + 3\bl^2)(1+\bm)(1-2\bl+\bm), \]
so $c_1(Y) = \bl + 2\bm$ and $c_2(Y) = -3\bl^2 + 4\bl\bm +T^2$.
Hence $c_2(Y) + c_1(Y)^2 = -2\bl^2 +8\bl\bm +5\bm^2$, which has product
18 with $\bl$ and 34 with $\bm$. Thus $\pi_! c_2(Z) = \rvec{18}{34}$.
\end{enumerate}

\begin{rmk}
\label{rmk:nmarkedset}
In each case we have described a basis for the Picard lattice $N$,
which is tantamount to specifying an $N$-marking in the sense of
Definition \ref{def:nmarked}. On an elementary level, we could therefore
interpret each entry in the list as defining a set $\sff$ of $N$-marked Fano
3-folds.
\end{rmk}

\section{The matching problem}
\label{sec:match}

Combining the results described in \S \ref{subsec:tcs_def} and \S
\ref{subsec:block_from_fano}, we can produce twisted connected sum \gtmfd s
from matching pairs of Fano 3-folds. We will apply the methods developed in
\cite{g2m} to the problem of \emph{finding} matchings between Fanos of rank 1
and 2. In this section, we summarise the results from \cite[\S6]{g2m} on
finding matchings with a prescribed ``configuration'' of the Picard lattices of
a pair of Fano 3-folds,
reducing the problem %
to a combination of problems in lattice arithmetic and deformation theory.
The main result here---Proposition \ref{prop:match}---improves
on \cite[Proposition 6.18]{g2m} to deal more clearly with skew configurations.

\pagebreak[2]

\subsection{Configurations and matching}
\label{subsec:config}

Let $\kd_\pm \subset Y_\pm$ be smooth anticanonical divisors of a pair
of Fanos, and $\hkr : \kd_+ \to \kd_-$ a matching.
Let $\hdg_+$ be a \emph{marking} of $\kd_+$, \ie an isometry
$\hdg_+ : H^2(\kd_+) \to L$ where $L$ is a copy of the K3 lattice
(the unique unimodular lattice of signature (3,19)).
Then $\hdg_- := h_+ \circ \hkr^*$ is a marking of $\kd_-$. The images
of $H^2(Y_\pm) \subset H^2(\kd_\pm)$ under $\hdg_\pm$ are a pair of
primitive sublattices $N_\pm \subset L$, isometric to the Picard lattices.
This pair is well-defined up to the action of the isometry group $O(L)$,
and plays a crucial role.

\begin{defn}
\label{def:config}
Given a pair of Fanos with Picard lattices $N_+$ and $N_-$, call a pair
of primitive embeddings $N_+, N_- \into L$ a \emph{configuration}. Two such
pairs of embeddings are considered equivalent if they are related by the action
of $O(L)$.

We call a configuration \emph{orthogonal} if the reflections of $L(\bbr)$ in
$N_+$ and $N_-$ commute. If in addition $N_+ \cap N_-$ is trivial then we call
the configuration \emph{perpendicular}. If the configuration is not orthogonal
then we call it \emph{skew}.
\end{defn}

We saw in \S \ref{subsec:tcs_top} that the homeomorphism invariants of the
twisted connected sum $M$ resulting from the matching depend on the
configuration (\eg $H^2(M) = N_+ \cap N_-$), and in Theorem \ref{thm:ek_tcs}
that the generalised Eells-Kuiper invariant $\EK(M)$ does too.
We therefore ask:

\smallskip
{\narrower \noindent \em Given a pair $\sff_+$, $\sff_-$ of deformation
types of Fano 3-folds, which configurations of embeddings $N_\pm \subset L$
of their Picard lattices arise from some matching of elements of $\sff_+$
and~$\sff_-$?\par}

\smallskip \noindent
We see below that it is not too hard to answer this when one of the types 
has Picard rank 1, and we will be able to say quite a lot when both types 
have Picard rank 2. In general the question is quite difficult, but in any case
a first step in simplifying it is to rephrase it as a
problem of finding suitable triples of classes in $L(\bbr) := L \otimes \bbr$.
Recall that the \emph{period} of a marked K3 surface
$(\Sigma, \hdg)$ is an oriented two-plane $\Pi \subset L(\bbr)$, the image
under $h : H^2(\Sigma; \bbr) \to L(\bbr)$ of the real and
imaginary parts of classes in $H^{2,0}(\Sigma; \bbc)$.

\begin{lem}
\label{lem:triples}
Let $Y_\pm$ be a pair of Fano 3-folds, and let $N_\pm \subset L$ be the images
of primitive isometric embeddings of the respective Picard lattices.
Then the pair $(N_+, N_-)$ is the configuration
of some matching of $Y_+$ and $Y_-$ if and only if there exist
\begin{itemize}
\item an orthonormal triple $(k_+, k_-, k_0)$ of positive classes in~$L(\bbr)$,
\item anticanonical divisors $\kd_\pm \subset Y_\pm$,
\item markings $\hdg_\pm$ of $\kd_\pm$,
\end{itemize}
such that the oriented plane $\gen{k_\mp, \pm k_0}$ is the period of
$(\kd_\pm, \hdg_\pm)$, 
$\hdg^{-1}_\pm(k_\pm)$ is the restriction of a Kähler class on~$Y_\pm$,
and $N_\pm$ is the image of the composition
$H^2(Y_\pm) \to H^2(\kd_\pm) \to L$.
\end{lem}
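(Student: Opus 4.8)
The plan is to read off both implications essentially by unwinding the definitions, the only non-formal inputs being Yau's theorem, the Torelli theorem for K3 surfaces, and the deformation-theoretic results of \cite[\S 6]{g2m}.

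\emph{Necessity.} Suppose $\hkr$ is a matching with configuration $(N_+,N_-)$, realised by \hk triples $(\omega^I_\pm,\omega^J_\pm,\omega^K_\pm)$ with $\omega^I_\pm$ the restriction of a Kähler form on $Y_\pm$. First rescale both triples by a common positive real factor so that the common square is~$1$; this is consistent between the two sides because $\hkr^*\omega^I_-=\omega^J_+$ forces $[\omega^I_-]^2=[\omega^I_+]^2$, and it preserves the relations defining a \hk rotation. Choose a marking $\hdg_+$ of $\kd_+$ such that, setting $\hdg_-:=\hdg_+\circ\hkr^*$, the images $\hdg_\pm(H^2(Y_\pm))$ are exactly $N_\pm$ (possible by definition of the configuration), and put $k_+:=\hdg_+[\omega^I_+]$, $k_-:=\hdg_+[\omega^J_+]$, $k_0:=\hdg_+[\omega^K_+]$ (extending $\hdg_+$ over $\bbr$). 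The holomorphic $(2,0)$-form $\Omega_+:=\omega^J_++i\omega^K_+$ satisfies $\Omega_+\wedge\Omega_+=0$ and $\omega^I_+\wedge\Omega_+=0$ for dimension reasons on a surface, which gives $[\omega^J_+]^2=[\omega^K_+]^2$ and $[\omega^I_+]\cdot[\omega^J_+]=[\omega^I_+]\cdot[\omega^K_+]=[\omega^J_+]\cdot[\omega^K_+]=0$; with the normalisation and the positivity of the Kähler class $[\omega^I_+]$ this makes $(k_+,k_-,k_0)$ an orthonormal triple of positive classes in $L(\bbr)$. The period of $(\kd_+,\hdg_+)$ is the image under $\hdg_+$ of $\gen{\re\Omega_+,\im\Omega_+}=\gen{\omega^J_+,\omega^K_+}$, namely $\gen{k_-,k_0}$; pushing the \hk rotation relations through $\hdg_-$ gives $\hdg_-[\omega^I_-]=k_-$, $\hdg_-[\omega^J_-]=k_+$, $\hdg_-[\omega^K_-]=-k_0$, so the period of $(\kd_-,\hdg_-)$ is $\gen{k_+,-k_0}$. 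Thus $\gen{k_\mp,\pm k_0}$, the divisors $\kd_\pm$ and the markings $\hdg_\pm$ satisfy all the stated conditions, with $\hdg_\pm^{-1}(k_\pm)=[\omega^I_\pm]$.

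\emph{Sufficiency.} I would run this in reverse. From the data, first recover \hk triples: since $\hdg_+^{-1}(k_+)$ is a positive multiple of the restriction of a Kähler class on $Y_+$, Yau's theorem produces a Ricci-flat Kähler metric $g_+$ on $\kd_+$ with Kähler form $\omega^I_+$ in that class, whose holomorphic $(2,0)$-form has class spanning $H^{2,0}(\kd_+)$, mapped by $\hdg_+$ into the complex line through $k_-+ik_0$ (by the period hypothesis and the orientation convention), so after multiplying by a unit complex number it is a form $\Omega_+=\omega^J_++i\omega^K_+$ with $\hdg_+[\Omega_+]=k_-+ik_0$; here one invokes the analysis of \cite[\S 6]{g2m} (which rests on Proposition~\ref{prop:block}\ref{it:cones}) to know in addition that $\omega^I_+$ may be taken to be the restriction of a Kähler form on~$Y_+$. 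Symmetrically one gets a \hk triple on $\kd_-$ with classes $\hdg_-^{-1}(k_-),\hdg_-^{-1}(k_+),-\hdg_-^{-1}(k_0)$. To build $\hkr$, rotate the complex structure on the $+$ side: let $J_+$ be the member of the twistor family of $g_+$ with Kähler form $\omega^J_+$; using the frame $(J_+,I_+,-K_+)$ one checks that $(\kd_+,J_+)$ has Kähler class $\hdg_+^{-1}(k_-)$ and period $\gen{k_+,-k_0}$. Then $\psi:=\hdg_+^{-1}\circ\hdg_-$ is an integral isometry $H^2(\kd_-;\Z)\to H^2(\kd_+;\Z)$ that is a Hodge isometry from $(\kd_-,I_-)$ to $(\kd_+,J_+)$ and carries the Kähler class $\hdg_-^{-1}(k_-)$ to the Kähler class $\hdg_+^{-1}(k_-)$, so by the strong Torelli theorem $\psi=\hkr^*$ for a unique biholomorphism $\hkr\colon(\kd_+,J_+)\to(\kd_-,I_-)$. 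Viewed as a diffeomorphism $\kd_+\to\kd_-$ this $\hkr$ satisfies $\hkr^*[\omega^I_-]=[\omega^J_+]$, hence $\hkr^*g_-=g_+$ by uniqueness in Yau's theorem; so $\hkr$ is an isometry of the \hk metrics, carries parallel $2$-forms to parallel $2$-forms, and — these being determined by their cohomology classes, which $\hkr^*=\psi$ matches — satisfies $\hkr^*\omega^I_-=\omega^J_+$, $\hkr^*\omega^J_-=\omega^I_+$, $\hkr^*\omega^K_-=-\omega^K_+$. Thus $\hkr$ is a \hk rotation (Definition~\ref{def:hk}), hence a matching (Definition~\ref{def:fanomatch}), and since $\hdg_-=\hdg_+\circ\hkr^*$ its configuration is $(N_+,N_-)$.

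I expect two points to need care, neither conceptually deep. The first is the assertion that $\omega^I_\pm$ can be realised \emph{literally} as restrictions of Kähler forms on $Y_\pm$, rather than merely as \hk forms in the prescribed classes: this is exactly the content imported from \cite[\S 6]{g2m}, which I would cite as a black box. The second, and the real place to be careful, is the orientation bookkeeping: one must verify that the oriented period planes $\gen{k_\mp,\pm k_0}$, with the signs of $k_0$ precisely as in the statement, are compatible with the orientations carried by the holomorphic $(2,0)$-forms and by the twistor rotation from $I_+$ to $J_+$, so that $\psi$ is a Hodge isometry for the correct pair of complex structures — a sign error here would make $\hkr$ anti-holomorphic and break the argument. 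Modulo these, the proof is a repackaging of \cite[\S 6]{g2m}.
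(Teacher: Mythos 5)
Your proof is correct and takes essentially the same route as the paper: the paper's own proof just notes that necessity is trivial (setting $k_\pm = \hdg_\pm[\omega^I_\pm]$, $k_0 = \pm\hdg_\pm[\omega^K_\pm]$) and delegates sufficiency to the Torelli theorem via \cite[Proposition 6.2]{g2m}, which is exactly the Yau-plus-twistor-rotation-plus-strong-Torelli argument you reconstruct in detail. The two points you flag — realising $\omega^I_\pm$ literally as restrictions of ambient Kähler forms, and the orientation bookkeeping for the oriented periods $\gen{k_\mp,\pm k_0}$ — are precisely what that citation absorbs, so your write-up has no gap relative to the paper's own level of detail.
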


\begin{proof}
Necessity is trivial, setting $k_\pm = h_\pm(\kclass_{\pm|\kd_\pm})$
for the Kähler classes $\kclass_\pm$ that appear in Definitions~ \ref{def:match},
and $k_0$ corresponding to a generator of $\Pi_+ \cap \hkr^* \Pi_-$,
all normalised to unit length. 
Sufficiency relies on the Torelli theorem,
\cf \mbox{\cite[Proposition 6.2]{g2m}}.
\end{proof}

To study how the matching problem depends on the choice of
configuration, let us first set up some notation for various lattices.
\begin{itemize}
\item $W  := N_+ + N_-$ (this need not be primitive in $L$),
\item $T_\pm \subset L$ the perpendicular of $N_\pm$,
\item $T := T_+ \cap T_-$, or equivalently the perpendicular of $W$,
\item $W_\pm := T_\mp \cap N_\pm$, and
\item $\Lambda_\pm \subset L$ the perpendicular to $T \oplus W_\mp$,
or equivalently the perpendicular to $W_\mp$ in the primitive overlattice
of $W$.
\end{itemize}

\begin{rmk}
\label{rmk:orth}
$N_\pm \subseteq \Lambda_\pm$, with equality if and only if $N_+$ and $N_-$
``intersect orthogonally'', \ie when
$W(\bbr) = W_+(\bbr) \oplus W_-(\bbr) \oplus (N_+(\bbr) \cap N_-(\bbr))$;
equivalently the configuration is orthogonal in the sense of
Definition \ref{def:config}.
\end{rmk}

\subsection{Necessary conditions}
\label{subsec:necc}

Note that in Lemma \ref{lem:triples} we must obviously
have $k_\pm \in N_\pm(\bbr)$.
On the other hand, $N_\pm$ is contained in the Picard group of the marked
K3 $(\kd_\pm, \hdg_\pm)$, which is the subgroup of $L$ orthogonal to
the period; the marked K3 is automatically \emph{$N_\pm$-polarised}.
Thus $k_\mp$ and $k_0$ must both lie in $T_\pm(\bbr)$.
Hence
\begin{equation}
\label{eq:periods}
k_\pm \in W_\pm(\bbr), \quad k_0 \in T(\bbr) .
\end{equation}
Now we come to the heart of how the difficulty of the matching problem depends
on the configuration one tries to achieve: \eqref{eq:periods} implies
that the period $\gen{k_\mp, \pm k_0}$ is orthogonal to all of $\Lambda_\pm$,
so %
\emph{the marked K3 divisors used in a matching with the given configuration
are forced to be $\Lambda_\pm$-polarised}.

The significance is that
the Picard group of a generic K3 divisor in a generic member of a deformation
type $\sff_\pm$ of Fano 3-folds will be precisely the Picard lattice $N_\pm$ of
that type. To find matchings for a configuration where $\Lambda_\pm$ is 
strictly bigger than $N_\pm$, we therefore require \emph{non-generic} K3
divisors in members of $\sff_\pm$ (the
moduli space of $\Lambda_\pm$-polarised marked K3 surfaces forms a subspace of
the $N_\pm$-polarised K3s, whose codimension is $\rk \Lambda_\pm - \rk N_\pm$).

For configurations where $\Lambda_\pm = N_\pm$, we deduce in \S\ref{sec:orth}
the existence
of matchings between some elements of $\sff_+$ and $\sff_-$ from a general fact
(due to Beauville \cite{beauville04}) that a generic $N_\pm$-polarised K3
appears as an anticanonical divisor in some member of $\sff_\pm$.
In view of Remark \ref{rmk:orth}, this comparatively easy case corresponds to
orthogonal configurations.
To apply a similar argument for skew configurations (where
$\Lambda_\pm \supset N_\pm$), we first need to show for those specific
$\Lambda_\pm$ that generic $\Lambda_\pm$-polarised K3s appear as anticanonical
divisors in members of $\sff_\pm$. Even when it is true, it is something that
we can so far only verify case by case.
We refer to this process as `handcrafting'.

\begin{rmk}
\label{rmk:arith}
Before moving on to existence results for matchings with a prescribed
configuration, let us point out some necessary conditions.
\begin{enumerate}
\item Since $W(\bbr)$ contains a two-dimensional positive-definite subspace
(spanned by the orthogonal classes $k_+$ and $k_-$),
while its orthogonal complement in the signature $(3,19)$ lattice $L$
contains a class $k_0$ with $k_0^2 > 0$, the
quadratic form on $W$ must be non-degenerate of signature ${(2, \rk(W)-2)}$.
\item \label{it:ample}
$W_\pm \subset N_\pm$ must contain some ample classes of $Y_\pm$.
\item Since $\Lambda_+ \cap \Lambda_- \subset \Pic \kd_\pm$ and is orthogonal
to an ample class of $\kd_\pm$, it cannot contain any $(-2)$-classes.
\end{enumerate}
\end{rmk}

\begin{rmk}
\label{rmk:trichotomy}
In particular, \ref{it:ample} implies that any matching involving a Fano
with Picard rank $\rk N= 1$ must be perpendicular.
Moreover, for a configuration of lattices $N_+$ and $N_-$ where
at least one has rank 2, if the intersection $N_+ \cap N_-$ is non-trivial
then \ref{it:ample} forces the configuration to be orthogonal
in the sense of Definition \ref{def:config}. %
For configurations of Picard lattices of Fanos of rank $\leq 2$ that satisfy
the necessary conditions to be realised by a matching, we therefore have the
following trichotomy:
\begin{itemize}
\item Perpendicular configurations, \ie every element of $N_+$ is orthogonal to
every element of $N_-$.
\item Orthogonal configurations with non-trivial intersection. Then
$N_+ \cap N_-$ must have rank 1.
\item Skew configurations. Then $N_+ \cap N_-$ must be trivial, but $N_+$ is
not perpendicular to $N_-$ (the maps $N_\pm \to N_\mp^*$ must have rank 1).
\end{itemize}
In \S\ref{sec:orth} and \S\ref{sec:handcraft}, we will consider these cases in
turn.
\end{rmk}

\subsection{Sufficient conditions}

In order to describe the `genericity properties' we require for anti\-canonical
K3 divisors in families of Fano 3-folds, we recall some further
terminology. The \emph{period domain} is the space of oriented positive-definite
2-planes in $L(\bbr)$. It can be identified with
${\{ \Pi \in \bbp(L(\bbc)) : \Pi^2 = 0, \; \Pi \, \overline\Pi > 0\}}$
in order to exhibit a natural complex structure.
Given $\Lambda \subset L$, the period domain of $\Lambda$-polarised K3
surfaces is $D_\Lambda :=
\{ \Pi \in \bbp(\Lambda^\perp(\bbc)) : \Pi^2 = 0,\; \Pi \, \overline\Pi > 0\}$.

\begin{defn}
\label{def:nmarked}
Given a non-degenerate lattice $N$, an \emph{$N$-marking} of a closed
3-fold $Y$ is a surjective homomorphism
$i_Y : H^2(Y) \to N$ that is isometric for %
the anticanonical form of Definition \ref{def:acform}.
\end{defn}

We avoid calling $i_Y$ an ``$N$-polarisation'' since we do not impose any
conditions on ample classes.
If $Y$ is Fano
then the Picard lattice is non-degenerate so $i_Y$ is simply an
isometry. %

\begin{defn}
\label{def:generic}
Let $N \subseteq \Lambda \subset L$ be primitive non-degenerate sublattices
of $L$, and
let $\Amp_\sff$ be a non-empty open subcone of the positive cone in~$N(\bbr)$.
We say that a set $\sff$ of $N$-marked 3-folds is
\emph{$(\Lambda, \Amp_\sff)$-generic} if there is
$U_\sff \subseteq D_\Lambda$ with complement a countable 
union of complex analytic submanifolds of positive codimension with the
property that:
for any $\Pi \in U_\sff$ and $k \in \Amp_\sff$ there is $Y \in \sff$, a smooth
anticanonical divisor $\kd \subset Y$ and a marking $\hdg : H^2(\kd) \to L$
such that $\Pi$ is the period of $(\kd, \hdg)$,
the composition $H^2(Y) \to H^2(\kd) \to L$ equals the marking~$i_Y$,
and $\hdg^{-1}(k)$
is the image of the restriction to $\kd$ of a Kähler class on~$Y$.
\end{defn}

To be able to prove that a set $\sff$ of Fano 3-folds satisfies the
definition we typically take $\sff$ to be a deformation type,
but to make sense of the definition we do not need to remember any additional
structure on $\sff$ (\cf Remark \ref{rmk:nmarkedset}).

Meanwhile, when applying the next proposition we typically want all elements of
the sets $\sff_\pm$ to be Fano 3-folds (or building blocks) that are
topologically the same,
so that we have some control over the topology of the \gtmfd s resulting from
the matchings produced; essentially this means that all elements of $\sff_\pm$
should belong to the same deformation type.

\begin{prop}
\label{prop:match}
Consider a configuration of primitive non-degenerate sublattices
$N_+, N_- \subset L$, and let $\sff_\pm$ be a pair of sets of $N_\pm$-marked
3-folds.
Define $W$, $W_\pm$ and $\Lambda_\pm$ as above. Suppose
that there exist non-empty open cones $\Amp_{\sff_\pm} \subseteq N_\pm(\bbr)^+$
such that
\begin{enumerate}
\item the sets $\sff_\pm$ are $(\Lambda_\pm, \Amp_{\sff_\pm})$-generic,
\item \label{it:amp} $W_\pm \cap \Amp_{\sff_\pm} \neq \emptyset$.
\end{enumerate}
Then there is an open dense subcone $\mathcal{W} \subseteq
(W_+(\R) \cap \Amp_{\sff_+}) \times (W_-(\R) \cap \Amp_{\sff_-})$
such that for every $(k_+, k_-) \in \mathcal{W}$ with $k_+^2 = k_-^2$
there exist $Y_\pm \in \sff_\pm$, anticanonical K3 divisors
$\kd_\pm \subset Y_\pm$
and Kähler classes $\kclass_\pm \in H^2(Y_\pm)$ such that
$\kclass_{\pm|\kd_\pm} = k_\pm$, with 
a matching $\hkr : \kd_+ \to \kd_-$ of $(Y_+, \kd_+, \kclass_+)$
and $(Y_-, \kd_-, \kclass_-)$ whose
configuration is the given pair of embeddings $N_\pm \subset L$.
\end{prop}

\begin{proof}
The argument is essentially the same as \cite[Proposition 6.18]{g2m},
even though the conclusion stated here is slightly stronger.

Let $T = W^\perp$ as before. Denote the ranks of $W$ and $W_\pm$ by $r$ and
$r_\pm$. Then $W_\pm(\bbr)$ and $T(\bbr)$ are real vector spaces
of signature $(1, \, r_\pm - 1)$ and $(1, \, 21 -r)$ respectively

In view of Lemma \ref{lem:triples} and \eqref{eq:periods}, matchings correspond
to certain triples of classes $(k_+, k_-, k_0)$ such that $k_\pm$ and $k_0$
belong to the positive cones $W_\pm(\bbr)^+$ and $T(\bbr)^+$ respectively.
Consider therefore the real manifold 
\[D= \PP\bigl(W_+(\R)^+\bigr)\times \PP\bigl(W_-(\R)^+\bigr)
\times \PP \bigl(T(\R)^+\bigr)\, .
\]
Below, we need the open subset
$\cala = \cala_+ \times \cala_- \times \PP\bigl(T(\R)^+\bigr)$,
where $\cala_\pm := \PP(W_\pm(\R) \cap \Amp_{\sff_\pm})$ is non-empty
by hypothesis \ref{it:amp}.
We have two Griffiths period domains
 \[  D_{\Lambda_\pm} =
\{\textrm{positive-definite planes }\Pi \subset \Lambda_\pm^\perp (\R) \} , \]
and projections
\[
\text{pr}_\pm \colon D \to D_{\Lambda_\pm}, \;
(\ell_+, \ell_-,\ell) \mapsto \gen{\ell_\mp, \pm \ell} .
\]
As stated before Definition \ref{def:generic}, $D_{\Lambda_\pm}$ can be
regarded as an open subset of $\PP(C_\pm)$, where $C_\pm$ is the null cone
in $\Lambda_\pm^\perp \otimes \C$;
if $\alpha, \beta$ is an oriented orthonormal
basis of $\Pi \in D_{\Lambda_\pm}$ then
$\Pi \mapsto \gen{\alpha+i\beta} \in \PP(C_\pm)$.
Given a choice $\alpha$ and~$\beta$, we can identify $T_\Pi D_{\Lambda_\pm}$
with pairs $(u,v)$ of vectors in the orthogonal complement of $\Pi$
in~$\Lambda_\pm^\perp(\R)$. Then the complex structure on $T_\Pi D_{N_\pm}$ is
given by $J : (u, v) \mapsto (-v, u)$.

Observe that the real analytic embedded submanifold
$\PP\bigl(W_\mp(\R)^+\bigr)\times \PP \bigl(T(\R)^+\bigr) \into
D_{\Lambda_\pm}$ is totally real: for $w \in W_\mp$ and $t \in T(\R)$,
the tangent space $\mathcal{T}$
to $\PP\bigl(W_\mp(\R)^+\bigr)\times \PP \bigl(T(\R)^+\bigr)$
at $\Pi = \gen{w,t}$ corresponds to $(u,v)$ such
that $u \in w^\perp \subseteq W_\mp(\R)$ and $v \in t^\perp \subseteq T(\R)$,
so $J(\mathcal{T})$ is transverse to $\mathcal{T}$.

Crucially, this totally real submanifold has \emph{maximal dimension}:
\[ \dim_\R \PP \bigl(W_\mp(\R)^+\bigr)\times \PP \bigl(T(\R)^+\bigr) =
(r_\mp-1)  + \bigl(22 - r - 1\bigr) = 20 - r + r_\mp =  
20-\rk \Lambda_\pm = \dim_\C D_{\Lambda_\pm}  \]
Consequently, its intersection with any positive-codimensional complex analytic
submanifold of $D_{\Lambda_\pm}$ is a positive-codimensional real analytic
submanifold of $\PP\bigl(W_\mp(\R)^+\bigr)\times \PP \bigl(T(\R)^+\bigr)$.
Hence the pre-image in
$\PP\bigl(W_\mp(\R)^+\bigr)\times \PP \bigl(T(\R)^+\bigr)$ of 
the subset $U_{\fbb_\pm} \subset D_{\Lambda_\pm}$ from
Definition \ref{def:generic} is open and dense.
Because $\text{pr}_\pm$ is a projection of a product manifold onto a factor
the same is true for $\text{pr}_\pm^{-1}(U_{\fbb_\pm}) \subset D$.
In turn,
\[ \bigl(\cala_+ \times \cala_- \times \PP(T(\R)^+)\bigr) \cap
\text{pr}_+^{-1}(U_{\fbb_+}) \cap \text{pr}_-^{-1}(U_{\fbb_-}) \]
is open and dense in $\cala_+ \times \cala_- \times \PP(T(\R)^+)$, and hence
the image $\mathcal{W}'$ of this subset under projection to
$\cala_+ \times \cala_-$ is open and dense in $\cala_+ \times \cala_-$.

If we let $\mathcal{W} = \{(k_+, k_-) \in 
(W_+(\R) \cap \Amp_{\sff_+}) \times (W_-(\R) \cap \Amp_{\sff_-}) :
([k_+], [k_-]) \in \mathcal{W}'\}$, then for every $(k_+, k_-) \in \mathcal{W}$
such that $k_+^2 = k_-^2$ there is a $k_0 \in T(\R)^+$ such that
Lemma \ref{lem:triples} applies to~$(k_+, k_-, k_0)$.
\end{proof}

\newcommand{\perptable}{
\begin{table}[t]
{\small
\setlength{\tabcolsep}{2pt}
\begin{tabular}[t]{C{24pt}C{20pt}C{20pt}C{16pt}@{\hspace{6pt}}*{4}{C{16pt}}}
\toprule
\multirow{2}{*}{$b_3$} & \multirow{2}{*}{$\#$} &
\multicolumn{6}{c}{$d$} \\ \cmidrule(l){3-8}
 &  & 2 & 4 & 6 & 8 & 12 & 24 \\ \midrule

 71  &  15  &   8  &   5  &   1  &   1  &   0  &   0  \\
 73  &  20  &  16  &   3  &   1  &   0  &   0  &   0  \\
 75  &  40  &  35  &   3  &   1  &   0  &   1  &   0  \\
 77  &  39  &  36  &   0  &   2  &   0  &   1  &   0  \\
 79  &  73  &  65  &   3  &   4  &   0  &   1  &   0  \\
 81  &  60  &  55  &   1  &   4  &   0  &   0  &   0  \\
 83  &  77  &  67  &   3  &   6  &   1  &   0  &   0  \\
 85  &  63  &  49  &   8  &   4  &   1  &   1  &   0  \\
 87  &  77  &  69  &   4  &   3  &   0  &   1  &   0  \\
 89  &  57  &  49  &   5  &   2  &   1  &   0  &   0  \\
 91  &  55  &  51  &   2  &   2  &   0  &   0  &   0  \\
 93  &  49  &  45  &   4  &   0  &   0  &   0  &   0  \\
 95  &  52  &  49  &   0  &   2  &   0  &   0  &   1  \\
 97  &  51  &  44  &   3  &   3  &   1  &   0  &   0  \\
 99  &  54  &  42  &   7  &   3  &   2  &   0  &   0  \\
101  &  33  &  26  &   1  &   5  &   1  &   0  &   0  \\
103  &  66  &  55  &   3  &   7  &   1  &   0  &   0  \\
105  &  41  &  38  &   0  &   3  &   0  &   0  &   0  \\
107  &  48  &  43  &   1  &   3  &   1  &   0  &   0  \\
109  &  34  &  31  &   0  &   3  &   0  &   0  &   0  \\
111  &  43  &  40  &   0  &   2  &   1  &   0  &   0  \\
113  &  40  &  34  &   5  &   0  &   1  &   0  &   0  \\
115  &  32  &  30  &   1  &   1  &   0  &   0  &   0  \\
117  &  30  &  28  &   1  &   0  &   1  &   0  &   0  \\
119  &  31  &  28  &   0  &   3  &   0  &   0  &   0  \\
121  &  29  &  26  &   1  &   2  &   0  &   0  &   0  \\
123  &  17  &  16  &   0  &   1  &   0  &   0  &   0  \\
125  &  11  &  10  &   1  &   0  &   0  &   0  &   0  \\
127  &  20  &  14  &   3  &   2  &   1  &   0  &   0  \\
129  &  11  &  10  &   0  &   1  &   0  &   0  &   0  \\
131  &   9  &   8  &   1  &   0  &   0  &   0  &   0  \\
\bottomrule
\end{tabular}
\hspace{1cm}
\begin{tabular}[t]{C{24pt}C{20pt}C{20pt}C{16pt}@{\hspace{6pt}}*{4}{C{16pt}}}
\toprule
\multirow{2}{*}{$b_3$} & \multirow{2}{*}{$\#$} &
\multicolumn{6}{c}{$d$} \\ \cmidrule(l){3-8}
 &  & 2 & 4 & 6 & 8 & 12 & 24 \\ \midrule
133  &   3  &   3  &   0  &   0  &   0  &   0  &   0  \\
135  &  10  &   9  &   0  &   1  &   0  &   0  &   0  \\
137  &  12  &  12  &   0  &   0  &   0  &   0  &   0  \\
139  &   4  &   4  &   0  &   0  &   0  &   0  &   0  \\
141  &   2  &   1  &   1  &   0  &   0  &   0  &   0  \\
143  &   3  &   3  &   0  &   0  &   0  &   0  &   0  \\
145  &   7  &   7  &   0  &   0  &   0  &   0  &   0  \\
147  &   2  &   2  &   0  &   0  &   0  &   0  &   0  \\
151  &   1  &   1  &   0  &   0  &   0  &   0  &   0  \\
153  &   2  &   2  &   0  &   0  &   0  &   0  &   0  \\
155  &   8  &   7  &   1  &   0  &   0  &   0  &   0  \\
157  &   4  &   4  &   0  &   0  &   0  &   0  &   0  \\
159  &   6  &   6  &   0  &   0  &   0  &   0  &   0  \\
161  &   3  &   3  &   0  &   0  &   0  &   0  &   0  \\
163  &   8  &   8  &   0  &   0  &   0  &   0  &   0  \\
165  &   2  &   2  &   0  &   0  &   0  &   0  &   0  \\
167  &   3  &   3  &   0  &   0  &   0  &   0  &   0  \\
169  &   3  &   3  &   0  &   0  &   0  &   0  &   0  \\
171  &   1  &   1  &   0  &   0  &   0  &   0  &   0  \\
173  &   2  &   2  &   0  &   0  &   0  &   0  &   0  \\
175  &   1  &   1  &   0  &   0  &   0  &   0  &   0  \\
179  &   4  &   4  &   0  &   0  &   0  &   0  &   0  \\
181  &   1  &   1  &   0  &   0  &   0  &   0  &   0  \\
183  &   1  &   1  &   0  &   0  &   0  &   0  &   0  \\
187  &   3  &   3  &   0  &   0  &   0  &   0  &   0  \\
189  &   1  &   1  &   0  &   0  &   0  &   0  &   0  \\
195  &   1  &   1  &   0  &   0  &   0  &   0  &   0  \\
197  &   2  &   2  &   0  &   0  &   0  &   0  &   0  \\
239  &   1  &   1  &   0  &   0  &   0  &   0  &   0  \\[0.5em]
\midrule
Total& 1378 & 1215 &  71  &  72  &  14  &   5  &   1  \\[1.6pt]
\bottomrule
\end{tabular}
}
\bigskip
\caption{Twisted connected sums from perpendicular matching of Fanos with Picard rank 1 or 2}
\label{table:perp}
\vspace{-4mm plus 20mm}
\end{table}
}

\section{Orthogonal matching}
\label{sec:orth}

We now consider the problem of finding matchings of Fano 3-folds of Picard rank
1 or 2, with prescribed configuration that is orthogonal in the sense of
Definition \ref{def:config}.
As pointed out in Remark~\ref{rmk:orth}, this corresponds to
the Picard lattices $N_\pm$ being equal to the lattices $\Lambda_\pm$ that
are used in the hypothesis of Proposition \ref{prop:match}.
Therefore the following genericity result is enough to let us apply
Proposition \ref{prop:match} for these configurations.

\begin{prop}[{\cite[Proposition 6.9]{cym}, based on Beauville
\cite{beauville04}}]
\label{prop:generic}
Let $\sff$ be a deformation type of Fano 3-folds, and embed its Picard lattice
$N$ primitively in $L$. Then there exists an open cone
$\Amp_\sff \subset N(\bbr)$ such that $\sff$ is $(N, \Amp_\sff)$-generic.
\end{prop}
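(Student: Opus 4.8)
The plan is to deduce the statement from Beauville's theorem \cite{beauville04}, to the effect that a sufficiently generic $N$-polarised K3 surface occurs as a smooth anticanonical divisor in some Fano 3-fold of the deformation type $\sff$, packaged into families as in \cite[\S 6]{cym}. First I would set up the relevant moduli space: let $\calg$ consist of triples $(Y,\kd,\hdg)$, where $Y\in\sff$, $\kd\in|{-}K_Y|$ is a smooth K3 divisor, and $\hdg\colon H^2(\kd)\to L$ is a marking whose restriction carries $H^2(Y)$ isometrically onto the prescribed primitive sublattice $N\subset L$. Since Fanos in $\sff$ admit a quasi-projective parameter space, over which the anticanonical linear systems form a projective bundle, smoothness of $\kd$ is an open condition, and the marking is an unramified covering datum, $\calg$ is a complex manifold and it carries a holomorphic period map $\wp\colon\calg\to D_N$ sending $(Y,\kd,\hdg)$ to the period of $(\kd,\hdg)$. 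The essential input --- Beauville's theorem together with the local Torelli theorem for K3 surfaces --- is the dominance of $\wp$; moreover Beauville's analysis also controls the polarisation, so that, after the normalisation of markings in \cite[\S 6]{cym}, the Kähler cone of $Y$ is carried by $\hdg$ into a fixed open cone in $N(\bbr)$, which we take to be $\Amp_\sff$, and which lies in the positive cone of $N$ since $\kappa^2\cdot({-}K_Y)>0$ for any Kähler class $\kappa$ on $Y$.

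Granting this, the genericity statement is essentially formal. The period map descends to an algebraic morphism from the parameter space to the quasi-projective variety $D_N/\Gamma$ (with $\Gamma$ the relevant arithmetic group), so by Chevalley's theorem its image is constructible, and dominance forces the complement of the image to sit in a proper closed subvariety; pulling back to $D_N$, the complement of the image of $\wp$ is contained in a locally finite union of analytic subsets of positive codimension. Take $U_\sff$ to be (contained in) the image of $\wp$. By the construction of $\Amp_\sff$, for every $(Y,\kd,\hdg)\in\calg$ and every $k\in\Amp_\sff$ the class $\hdg^{-1}(k)\in H^2(Y;\bbr)$ is the restriction of a Kähler class on $Y$; hence any $\Pi\in U_\sff$, written as $\wp(Y,\kd,\hdg)$, together with any $k\in\Amp_\sff$, supplies exactly the data required by Definition \ref{def:generic} (that $N$ is then the image of $H^2(Y)\to H^2(\kd)\to L$ being built into the definition of $\calg$).

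The hard part, and the expected main obstacle, is the dominance of $\wp$ --- that is, Beauville's theorem itself. It is proved not in one stroke but by a deformation-theoretic argument running case by case through the deformation types of Fano 3-folds: one checks in each case that the general anticanonical K3 divisor in $\sff$ has Picard lattice exactly $N$ with $H^2(Y)\hookrightarrow H^2(\kd)$ primitive --- so that a primitive embedding $N\subset L$ is the correct target --- and that varying $Y$ over $\sff$, together with the choice of $\kd\in|{-}K_Y|$ and of Kähler class on $Y$, sweeps the periods over a Zariski-dense subset of $D_N$. Everything else is routine once this is available; indeed this is precisely the content of \cite[Proposition 6.9]{cym}.
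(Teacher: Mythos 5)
Your proposal is correct in outline and takes essentially the same route as the paper, which gives no proof of its own but defers to exactly this argument (\cite[Proposition 6.9]{cym}): Beauville's dominance result, packaged through the period map for marked $N$-polarised anticanonical K3 divisors, with $\Amp_\sff$ supplied by the (deformation-invariant) ample cone of the Fanos in $\sff$ and the genericity of $U_\sff$ extracted formally from dominance. The one inaccuracy is your closing description of Beauville's theorem as a case-by-case check over the deformation types: it is a uniform deformation-theoretic argument (smoothness of the map of deformation functors for the pair $(Y,\kd)$ to polarised K3 deformations, via vanishing theorems coming from ampleness of $-K_Y$), but since you invoke it as a black box this does not affect your deduction.
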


In the trichotomy of Remark \ref{rmk:trichotomy}, orthogonal configurations
encompass the cases of perpendicular configurations and orthogonal
configurations with non-trivial intersection.
Now we study in turn the twisted connected sum \gtmfd s that result from
matchings of these kinds, using Fano 3-folds of Picard rank 1 or 2. 

\subsection{Perpendicular matching}

The simplest way to find a matching between elements of two deformation types
$\sff_\pm$ of Fano 3-folds is to consider perpendicular configurations,
\ie where the images of $N_+$ and $N_-$ in $L$ are perpendicular to each other.
One reason is that we do not need any genericity results beyond
Proposition \ref{prop:generic}, but a further reason is arithmetic:
for nearly all pairs $\sff_\pm$ one has
$\rk N_+ + \rk N_- \leq 11$, in which case Nikulin
\mbox{\cite[Theorem 1.12.4]{nikulin79}} guarantees that there does in fact
exist a primitive embedding of the perpendicular direct sum $N_+ \perp N_-$
into $L$.

\perptable

In particular, we can apply this to find perpendicular matchings among the 53
types of Fanos of Picard rank 1 and 2. However, we ignore \#1 in the list
of rank 2 Fanos, since that does not have an associated Fano-type building
block. Given one of the 1378 unordered pairs $\sff_+$, $\sff_-$ among the
other 52 deformation types, we carry out the following procedure:
\begin{itemize}
\item Apply \cite[Theorem 1.12.4]{nikulin79} to find a primitive embedding
into $L$ of the perpendicular direct sum $N_+ \perp N_-$.
\item Apply Proposition \ref{prop:match} to find a matching between some
$Y_\pm \in \sff_\pm$ with the given configuration $N_+, N_- \subset L$.
\item
Apply Proposition \ref{prop:block} to produce a pair of Fano-type building
blocks with a perpendicular matching.
\item Apply Theorem \ref{thm:g2glue} to construct a twisted connected
sum \gtmfd{} $M$.
\end{itemize}
Now Theorem \ref{thm:tcs_H} shows that $M$ is 2-connected, with $H^4(M)$ torsion-free, and
\[ b_3(M) = b_3(Z_+) + b_3(Z_-) + 23. \]
Proposition \ref{prop:p1y} implies that $d(M)$, the greatest divisor of $p_M$,
equals the greatest common divisor of $\pi_! c_2(Z_+)$ and $\pi_! c_2(Z_-)$,
while Corollary \ref{cor:perp0} shows that $\EK(M) = 0$.
Thus all the classifying diffeo\-morphism invariants of $M$ can be determined
from the data in Tables \ref{table:species1} and \ref{table:blocks}.

Table \ref{table:perp} lists the invariants of the 1378 twisted connected
sums obtained this way. A total of 131 different 2-connected manifolds are
realised, with 60 different values of $b_3(M)$.  For comparison, twisted
connected sums involving only rank 1 Fanos realise 82 different manifolds, and
46 different values of $b_3(M)$ \cite[Table 3]{g2m}.

\pagebreak[1]

\begin{rmk}
According to Theorem 1.7 and Corollary 1.13 of \cite{nu}, the torsion-free
\gtstr s of diffeomorphic 2-connected twisted connected sums with $d$ not
divisible by 3 are automatically homotopic (if one chooses the
diffeomorphism correctly). While Table \ref{table:perp} shows that there are
also numerous instances of diffeomorphic twisted connected sums with $d = 6$
(\eg $(b_3, d) = (103,6)$ is realised by 7 different twisted connected sums),
Wallis \cite[Corollary 6.5.2]{wallis:thesis} builds on the methods of this
paper to compute the \gtstr s' $\xi$-invariant introduced
in \cite[Definition 6.8]{nu}, showing that the \gtstr s are homotopic for these
examples too.
However, \cite[Table 8.1]{wallis:thesis} exhibits other examples (involving
skew matchings) where the \gtstr s of twisted connected sums can be
distinguished by the $\xi$-invariant.
\end{rmk}

\begin{rmk}
\label{rmk:circle_bundles}
Recall from Wall's classification of closed simply-connected
spin 6-manifolds with torsion-free cohomology \cite[Theorem 5]{wall66v} that
for $d$ divisible by 12, there is such a 6-manifold $X$ with the cohomology
ring of $S^2 \times S^4$ and greatest divisor of $p_X$ equal to $d$.

The total space $M$ of the circle bundle over $X \# (S^3 \times S^3)^{\# k}$
with Euler class equal to the generator of $H^2$ is then a closed 2-connected
7-manifold with $b_3(M) = 2k+1$, torsion-free $H^4(M)$ and $d(M) = d$ (and this
is the only way to obtain such total spaces). One can use the corresponding
closed disc bundle as a coboundary for $M$ to compute that $\EK(M) = 0$.
(The more general problem of deciding which 2-connected 7-manifolds
can be realised as the total spaces of circle bundles 
is analysed by Jiang \cite{jiang14}.)

Thus the classification Theorem \ref{thm:2c7m} implies that the 6 entries
in Table \ref{table:perp} that have $d = 12$ or $24$ are diffeomorphic to
total spaces of circle bundles (in contrast to the result of
Baraglia \mbox{\cite[Proposition 6.2.1]{baraglia09}} that closed
\gtmfd s cannot be smoothly fibred by 4-manifolds), while the last entry of
Table \ref{table:mainex} is a topological $S^1$-bundle but not a smooth one.
\end{rmk}

\subsection{Orthogonal matching with non-trivial intersection}

\renewcommand{\thefootnote}{$\dagger$}

\newcommand{\orthtable}
{
\setlength{\belowcaptionskip}{4pt}
\begin{table}[tb]
\[
\setlength{\arraycolsep}{8pt}
\begin{array}[b]{rr@{\hspace{12pt}}rrrr@{\hspace{18pt}}r} \toprule
\rbox{\#_+}{10pt} & \rbox{\#_-}{10pt} & \rbox{B^2}{2pt}
& \rbox{A_+^2}{6pt} & \rbox{A_-^2}{6pt}
& \rbox{b_3(M)}{12pt} & \rbox{d(M)}{14pt} \\
\midrule
  6  &   6  &  -4 &  12   &  12   &   86   &  12 \\
  6  &  12  &  -4 &  12   &  20   &   82   &   4 \\
  6  &  21  &  -4 &  12   &  28   &   84   &   4 \\
  6  &  32  &  -4 &  12   &  12   &  104   &  12 \\
 12  &  12  &  -4 &  20   &  20   &   78   &   4 \\
 12  &  21  &  -4 &  20   &  28   &   80   &   4 \\
 12  &  32  &  -4 &  20   &  12   &  100   &   4 \\
 21  &  21  &  -4 &  28   &  28   &   82   &   4 \\
 21  &  32  &  -4 &  28   &  12   &  102   &   4 \\
 32  &  32  &  -4 &  12   &  12   &  122   &  12 \\
  2  &  24  &  -6 &   6   &  14   &  102   &   6 \\
 18  &  24  &  -6 &  24   &  14   &   84   &  12 \\
\footnotemark \;\;
  5  &  25  & -12 &  12   &  12   &   84   &   2 \\
 10  &  10  & -16 &  16   &  16   &   70   &   8 \\
 14  &  22  & -20 &  20   &  30   &   78   &   2 \\
 14  &  25  & -20 &  20   &  20   &   82   &   2 \\
 13  &  14  & -30 &  20   &  30   &   72   &   4 \\
 14  &  18  & -40 &  40   &  10   &   76   &   2 \\
 18  &  34  & -72 &  18   &   8   &  108   &   6 \\
\bottomrule
\end{array}
\]
\caption{Twisted connected sums $M$ from matchings of rank 2 blocks with
non-trivial intersection $N_+ \cap N_-$}
\label{table:orth}
\vspace{-4mm plus 20mm}
\end{table}
}

Next we consider matchings with configurations such that $N_+ \cap N_-$
is non-trivial. Then both Fanos must have Picard rank $\geq 2$. If we restrict
attention to the case when both Fanos have Picard rank precisely 2, then
as pointed out in Remark \ref{rmk:trichotomy} the only configurations with
$N_+ \cap N_-$ non-trivial for which we can possibly find a matching are
the ones that are orthogonal, in the sense of Definition \ref{def:config}.
Such configurations have $\Lambda_\pm = N_\pm$, so
to apply Proposition \ref{prop:match} we essentially do not need any genericity
result beyond Proposition \ref{prop:generic} that we applied to find
perpendicular matchings---the only extra data we need is to actually determine
the cone $\Amp_\sff$, but for rank 2 Fano 3-folds we have done 
that in \S\ref{subsec:MM}.

Compared with the perpendicular matching problem, the difficulty of finding
matchings of rank 2 blocks with non-trivial intersection $N_+ \cap N_-$
is therefore one of lattice-arithmetic: there must exist some integral lattice
$W$ of rank 3, containing $N_+$ and $N_-$, such that the orthogonal complement
of $W_\pm \subset N_\pm$ of $N_\mp$ is non-trivial, and contains a class
$A_\pm \in \Amp_{\sff_\pm}$.

\begin{lem}
\label{lem:orth}
Let $N_\pm$ be integral lattices of rank 2 with signature (1,1),
and let $A_\pm \in N_\pm$. Let $-\Delta_\pm$ be the discriminant of $N_\pm$, and
let $B_\pm$ be a generator of the orthogonal complement.
Then there exists a rank 3 integral
lattice $W$ with primitive isometric embeddings $N_\pm \into W$ such that
$A_\pm \perp N_\mp$ if and only if
$B_+^2 = B_-^2$ and $\Delta_+\Delta_- = k^2 A_+^2A_-^2$ for some integer $k$.
Then $k$ is the generator of the image of the product $N_+ \times N_- \to \Z$
in $W$.
\end{lem}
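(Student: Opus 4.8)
The plan is to prove both implications by making the $\Q$-structure of any admissible $W$ completely explicit, thereby reducing the existence of $W$ to a single divisibility statement. Throughout I take $A_\pm$ primitive in $N_\pm$ (this is implicit and automatic in our applications, where $A_\pm$ generates $W_\pm$; as $A_\pm$ is ample, $A_\pm^2>0$ and hence $B_\pm^2<0$). Some bookkeeping first: put $f_\pm:=[N_\pm:\Z A_\pm\oplus\Z B_\pm]$. Since $\Z A_\pm\oplus\Z B_\pm$ is an orthogonal sublattice of $N_\pm$ of determinant $A_\pm^2B_\pm^2$,
\[
 f_\pm^2\,\Delta_\pm \;=\; -\,A_\pm^2 B_\pm^2 .
\]
Moreover, because $A_\pm$ is primitive, the projection $p_\pm\colon N_\pm\to\Q$ onto the $B_\pm$-coordinate in $N_\pm\otimes\Q=\Q A_\pm\oplus\Q B_\pm$ has kernel $N_\pm\cap\Q A_\pm=\Z A_\pm$ and sends $\Z A_\pm\oplus\Z B_\pm$ onto $\Z$; hence $p_\pm(N_\pm)=\tfrac1{f_\pm}\Z$. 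The integer $f_\pm$ will be the link between the arithmetic of $N_\pm$ and that of $W$.

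\emph{Necessity.} Suppose $W$ exists. From $A_+\perp N_-$, $A_-\perp N_+$, and $A_\pm\perp B_\pm$ inside $N_\pm$, the four vectors $A_+,A_-,B_+,B_-$ annihilate one another pairwise except for the products $A_\pm^2$, $B_\pm^2$ and $B_+\cdot B_-$. A dimension count in the rank-$3$ space $W\otimes\Q$ (using $A_\pm^2\neq0$, so $\Q A_+$ and $\Q A_-$ are distinct anisotropic lines, each orthogonal to the span of $B_+,B_-$) forces $B_-\in\Q B_+$, and then
\[
 W\otimes\Q \;=\; \Q A_+ \,\oplus\, \Q A_- \,\oplus\, \Q B, \qquad B:=B_+,
\]
an orthogonal direct sum. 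Next, $B_\pm$ is primitive in $N_\pm$ (it generates the saturated sublattice $A_\pm^{\perp}\cap N_\pm$) and $N_\pm$ is primitive in $W$, so $B_\pm$ is primitive in $W$; two primitive vectors of $W$ spanning the line $\Q B$ agree up to sign, so $B_-=\pm B_+$ and in particular $B_+^2=B_-^2$. Now write $x\in N_+$, $y\in N_-$ in the coordinates above: all cross terms vanish and $x\cdot y=\pm p_+(x)\,p_-(y)\,B^2$, so the image of $N_+\times N_-\to\Z$ is $p_+(N_+)\,p_-(N_-)\,B^2\cdot\Z=\tfrac{B^2}{f_+f_-}\Z$. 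Integrality of $W$ forces $f_+f_-\mid B^2$; let $k$ be the generator of this ideal, so $|k|=|B^2|/(f_+f_-)$. Then, using $f_\pm^2\Delta_\pm=-A_\pm^2B^2$,
\[
 k^2 A_+^2 A_-^2 \;=\; \frac{A_+^2 A_-^2 (B^2)^2}{f_+^2 f_-^2} \;=\; \frac{(-A_+^2 B^2)(-A_-^2 B^2)}{f_+^2 f_-^2} \;=\; \Delta_+\Delta_- ,
\]
which gives both the stated criterion and the identification of $k$.

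\emph{Sufficiency.} Conversely, assume $B_+^2=B_-^2=:b$ and $\Delta_+\Delta_-=k^2A_+^2A_-^2$ with $k\in\Z$. From $f_\pm^2=-A_\pm^2 b/\Delta_\pm$ one gets $(f_+f_-)^2=A_+^2A_-^2b^2/(\Delta_+\Delta_-)=b^2/k^2$, so $f_+f_-=|b/k|$ is a positive integer and $f_+f_-\mid b$. Form the rational quadratic space $V:=\Q A_+\perp\Q A_-\perp\Q B$ with $B^2=b$, embed $N_\pm$ isometrically into the plane $\Q A_\pm\oplus\Q B\subset V$ by $A_\pm\mapsto A_\pm$ and $B_\pm\mapsto B$ (the Gram matrices match), and set $W:=N_++N_-\subset V$; it has rank $3$. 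Then $A_\pm\perp N_\mp$ holds by orthogonality in $V$; $N_\pm$ is primitive in $W$ because any element of $W$ lying in $\Q A_\pm\oplus\Q B$ has vanishing $A_\mp$-component and hence lies in $N_\pm+\Z B=N_\pm$; and $W$ is integral since the only products not manifestly in $\Z$ are those in $N_+\cdot N_-=p_+(N_+)\,p_-(N_-)\,b\cdot\Z=\tfrac{b}{f_+f_-}\Z=\pm k\Z\subseteq\Z$. The same computation shows $k$ generates the image of $N_+\times N_-\to\Z$ in $W$, giving the final clause.

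\emph{Main obstacle.} The delicate point is the identity $p_\pm(N_\pm)=\tfrac1{f_\pm}\Z$ — that the index of $\Z A_\pm\oplus\Z B_\pm$ in $N_\pm$ equals the denominator of the $B_\pm$-coordinate — which is precisely where primitivity of $A_\pm$ is used (without it the two quantities differ by the divisibility of $A_\pm$, and the numerical criterion must be corrected). The remaining ingredients — the orthogonal $\Q$-splitting of $W$, primitivity of $B_\pm$ in $W$, and primitivity of $N_\pm$ in the constructed $W$ — are then routine.
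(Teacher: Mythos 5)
Your proof is correct and follows essentially the same route as the paper: the identification of the images of $B_\pm$ in $W$ up to sign (giving $B_+^2=B_-^2$), the cross-pairing formula through the $B$-component, and the index/discriminant identity are exactly the paper's argument, with your $f_\pm$ equal to the paper's $|B^2|/a_\pm$ and your $W:=N_++N_-\subset \Q A_+\perp\Q A_-\perp\Q B$ the same lattice as the paper's $(N_+\oplus N_-)/\gen{B_+-B_-}$. You merely spell out a few points the paper leaves implicit (primitivity of $A_\pm$, why $B_-=\pm B_+$ in $W$, and primitivity of $N_\pm$ in the constructed $W$), which is fine.
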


\pagebreak[2]

\begin{proof}
If such a $W$ exists, then the images of $B_+$ and $B_-$ are both primitive
vectors, and both perpendicular to both $A_+$ and $A_-$. Thus, up to sign,
$B_+$ and $B_-$ must have the same image $B \in W$,
and hence $B_+^2 = B^2 = B_-^2$.

Note that for a pair $(v_+, v_-) \in N_+ \times N_-$,
the product of their images in $W$ is
\begin{equation}
\label{eq:extend}
v_+. v_- = \frac{(v_+.B_+)(v_-.B_-)}{B^2} .
\end{equation}
In particular, if we let $a_\pm$ be the positive generator of the image of
$N_\pm \to \bbz, \; v \mapsto v.B_\pm$, then $a_+a_- = -kB^2$.
Conversely if $a_+a_-$ is divisible by $B^2$, then we can use \eqref{eq:extend}
(together with the given forms on $N_+$ and $N_-$) to define the desired
integral quadratic form
on $W := (N_+ \oplus N_-)/\gen{B_+ - B_-}$.

Now observe that the index of the sublattice $\gen{A_+, B_+} \subseteq N_\pm$
is $\frac{B^2}{a_\pm}$. Letting $-\Delta_\pm$ be the discriminant of $N_\pm$,
we must therefore have
$A_+^2 B^2 = (-\Delta_\pm)\left(\frac{B^2}{a_\pm}\right)^2$. Hence
\[ (a_+ a_-)^2 = \frac{\Delta_+ \Delta_- (B^2)^2}{A_+^2 A_-^2} , \]
and $a_+a_-$ is divisible by $B^2$ if and only if
$\frac{\Delta_+\Delta_-}{A_+^2 A_-^2}$ is a perfect square.
\end{proof}

\pagebreak[3]

Next we summarise the relevant topological calculations.

\begin{lem}
\label{lem:orth_top}
Let $(Z_\pm, \kd_\pm)$ be a pair of building blocks whose polarising lattices
$N_\pm$ have rank~2, such that the kernel of $H^2(Z_\pm) \to H^2(\kd_\pm)$
is generated by $[\kd_\pm]$.
Let $\hkr : \kd_+ \to \kd_-$ be a matching whose configuration
$N_+, N_- \subset L$ has $N_+ \cap N_-$ of rank 1, and let
$M$ be the resulting twisted connected sum.

Let $W := N_+ + N_-$, and let $A_\pm \in N_\pm$ be a primitive vector
spanning the orthogonal complement of $N_\mp$ in $N_\pm$. Then
\begin{enumerate}
\item $H^2(M) \cong \Z$;
\item $b_3(M) = b_3(Z_+) + b_3(Z_-) + 22$;
\item $\Tor H^3(M) \cong \Tor L/W$;
\item \label{it:orth_tor}
$\Tor H^4(M) \cong (\Z/k)^2$, for $k$ as in Lemma \ref{lem:orth};
\item
if $k = 1$ then $d(M)$ (which divides $24$) is the greatest common
divisor of $\bar c_2(Z_+)A_+$ and $\bar c_2(Z_-)A_- \in \Z/24$, for
$\bar c_2(Z_\pm)$ as in \eqref{eq:c2blowup};
\begin{samepage}
\item
if $k = 1$ and $d(M)$ is divisible by $8$
(so that the Eells-Kuiper invariant $\EK(M)$ takes values in~$\Z/2$) then
\[ \EK(M) = \frac{\gd(\bar c_2(Z_+))\gd(\bar c_2(Z_-))}{4} \in \Z/2 , \]
where $\gd(\bar c_2(Z_\pm)) \in \{2,4,6,8,12,24\}$ is the greatest divisor of
$\bar c_2(Z_\pm)$.
\end{samepage}
\end{enumerate}
\end{lem}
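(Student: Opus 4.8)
The plan is to read off (i)--(iii) from Theorem \ref{thm:tcs_H}, extract the torsion in (iv) from Remark \ref{rmk:h4tor}, derive (v) by specialising Proposition \ref{prop:p1y} and Lemma \ref{lem:imy}, and prove (vi) by feeding a good choice of $x_\pm$ into Theorem \ref{thm:ek_tcs}. Fix a generator $B$ of $N_+ \cap N_-$; being an intersection of primitive sublattices of $L$ it is primitive, so $H^2(M) \cong N_+ \cap N_- \cong \Z$, which is (i), and $B$ agrees up to sign with the generators $B_\pm$ of $\langle A_\pm\rangle^\perp \subset N_\pm$ of Lemma \ref{lem:orth} (in particular $B^2 < 0$, since $A_\pm$ is ample and $N_\pm$ has signature $(1,1)$). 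Under the orthogonal-intersection hypothesis (Remark \ref{rmk:orth}), $N_- \cap T_+$ and $N_+ \cap T_-$ are the rank-$1$ lattices $\Z A_-$ and $\Z A_+$ (these are $W_\mp$, and $A_\pm$ is a primitive generator), $L/(N_+ + N_-) = L/W$ has free rank $22 - \rk W = 19$, and $H^3(Z_\pm)$ are torsion-free by the block axioms. Substituting into Theorem \ref{thm:tcs_H}\ref{it:h3m} gives $b_3(M) = 1 + 19 + 1 + 1 + b_3(Z_+) + b_3(Z_-)$, which is (ii), and shows the only torsion among the summands of $H^3(M)$ lies in $L/W$, which is (iii).

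For (iv), Remark \ref{rmk:h4tor} reduces the task to computing $\Tor(N_+^*/N'_-)$ and $\Tor(N_-^*/N'_+)$. The kernel of $\flat^+ : N_- \to N_+^*$ is $N_- \cap T_+ = \Z A_-$, so $N'_- \cong N_-/\Z A_- \cong \Z$. To locate $N'_-$ inside $N_+^*$, let $\beta \in N_+^*$ be the primitive functional vanishing on $A_+$; then the image of $B$ under $N_+ \to N_+^*$ equals $\pm a_+\beta$, where $a_+$ is the positive generator of $\{\,v \cdot B : v \in N_+\,\}$ as in the proof of Lemma \ref{lem:orth}. Writing $B = m v_0 + (\text{multiple of }A_-)$ with $v_0$ a generator of $N_-/\Z A_-$ and $m = |B^2|/a_-$ the index of $\langle A_-, B\rangle$ in $N_-$, one obtains $N'_- = \Z\,\flat^+(v_0) = \Z k\beta$ with $k = a_+ a_-/|B^2|$ the integer of Lemma \ref{lem:orth} (note $k \in \Z$ is exactly what makes $m \mid a_+$). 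Since $\beta$ is primitive, $N_+^*/N'_- \cong \Z \oplus \Z/k$, so $\Tor(N_+^*/N'_-) \cong \Z/k$, and the mirror-image computation gives $\Tor(N_-^*/N'_+) \cong \Z/k$.

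For (v), $k = 1$ forces $\Tor H^4(M) = 0$ by (iv), so $H^4(M)$ is torsion-free and the description of $\EK$ applies; moreover $M$ contains a $K3$ with trivial normal bundle, so $d(M) \mid 24$ and it suffices to work modulo $24$. By Proposition \ref{prop:p1y}, $p_M = -Y(c_2(Z_+), c_2(Z_-))$, and by Lemma \ref{lem:imy} the image of $Y$ is a direct summand of $H^4(M)$ with kernel $N'_- \oplus N'_+$; hence $d(M)$ is the greatest divisor of $(c_2(Z_+), c_2(Z_-))$ in $(H^4(Z_+) \oplus_0 H^4(Z_-))/(N'_- \oplus N'_+)$. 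When $k = 1$ we have $N'_- = \Z\beta$ with $\beta$ primitive and $\beta(A_+) = 0$, so restriction induces an isomorphism $N_+^*/N'_- \xrightarrow{\sim} (\Z A_+)^*$, under which $\bar c_2(Z_+) \bmod 24$ becomes $\bar c_2(Z_+)A_+ \in \Z/24$; combining with the mirror statement and the fact that the $\Z$-coordinate (the common restriction of the $c_2(Z_\pm)$ to $H^4(\kd)$, which is $24$) vanishes mod $24$, we get $d(M) = \gcd(\bar c_2(Z_+)A_+,\, \bar c_2(Z_-)A_-)$ in $\Z/24$.

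For (vi), assume $k = 1$ and $8 \mid d(M)$, so $d(M) \in \{8, 24\}$ and $\dM = 2$. Theorem \ref{thm:ek_tcs} gives $\EK(M) = (x_+ \cdot x_-)/4 \bmod 2$ for any $x_\pm \in N_\pm$ with $\flat^\mp(x_\pm) \equiv \bar c_2(Z_\mp) \bmod d$. With $k = 1$, $N'_-$ is spanned by the primitive $\beta \in N_+^*$ above and $N'_+$ by a primitive $\beta' \in N_-^*$ with $\beta'(A_-) = 0$; evaluating on $B$ shows $\beta(B) = \pm a_-$, $\beta'(B) = \pm a_+$, and $a_+a_- = -B^2$. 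Writing $\flat^-(x_+) = c_+\beta'$ and $\flat^+(x_-) = c_-\beta$ (integers), one has $x_\pm \cdot B = \pm c_\pm a_\pm$, so the pairing identity $v_+ \cdot v_- = (v_+\cdot B)(v_-\cdot B)/B^2$ from the proof of Lemma \ref{lem:orth} gives $x_+ \cdot x_- = c_+c_-(a_+a_-)/B^2 = -c_+ c_-$. The relation $\flat^-(x_+) \equiv \bar c_2(Z_-) \bmod d$ forces $\gcd(c_+, d) = \gcd(\gd(\bar c_2(Z_-)), d)$ (both being the greatest divisor of $\bar c_2(Z_-)$ reduced mod $d$, using that $\beta'$ is primitive), and symmetrically $\gcd(c_-, d) = \gcd(\gd(\bar c_2(Z_+)), d)$; since $\gd(\bar c_2(Z_\pm)) \mid 24$ (hence even) and $8 \mid d$, this pins down $v_2(c_+) = v_2(\gd(\bar c_2(Z_-)))$ when the latter is $\leq 2$ and gives $v_2(c_+) \geq 3$ otherwise, likewise for $c_-$. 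In every case $v_2(c_+ c_-) \geq 3 \iff v_2\!\big(\gd(\bar c_2(Z_+))\,\gd(\bar c_2(Z_-))\big) \geq 3$, and both products have $2$-adic valuation $\geq 2$, so $(x_+\cdot x_-)/4 \equiv \gd(\bar c_2(Z_+))\,\gd(\bar c_2(Z_-))/4 \bmod 2$. The main obstacle throughout is the lattice-index bookkeeping: verifying that the integer appearing in (iv) really is the $k$ of Lemma \ref{lem:orth}, and carrying the $2$-adic valuation argument in (vi) carefully enough to reduce $\EK(M)$ to the greatest divisors of the $\bar c_2(Z_\pm)$; everything else is a routine specialisation of Theorems \ref{thm:tcs_H} and \ref{thm:ek_tcs} and Lemma \ref{lem:imy}.
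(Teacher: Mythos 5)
Your proposal is correct and follows essentially the same route as the paper: (i)--(iii) read off from Theorem \ref{thm:tcs_H}, (iv) from Remark \ref{rmk:h4tor} together with the cotorsion computation of $N'_\mp\subset N_\pm^*$ via Lemma \ref{lem:orth}, (v) from Proposition \ref{prop:p1y} and Lemma \ref{lem:imy} using the evaluation-on-$A_\pm$ isomorphism $N_\pm^*/N'_\mp\cong\Z$, and (vi) by feeding $x_\pm$ into Theorem \ref{thm:ek_tcs}. Your explicit $2$-adic bookkeeping with the coefficients $c_\pm$ is just an expanded form of the paper's parity argument for the images of $x_\pm/2$ in $N_\pm/\gen{A_\pm}$, so there is nothing substantively different to flag.
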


\orthtable

\begin{proof}
(i)-(iii) are immediate consequences of Theorem \ref{thm:tcs_H}.

The image $N'_\mp$ of the product homomorphism
$\flat^{\pm} : N_\mp \to N_\pm^*$ from \eqref{eq:flat} has rank 1,
and Lemma \ref{lem:orth} implies that it has cotorsion $\Z/k$ in $N_\pm^*$.
(iv) now follows from \eqref{eq:h4tor}.

When $k = 1$, so that $N'_\mp \subset N_\pm^*$ is primitive, %
the isomorphism $N_\pm^*/N'_\mp \cong \Z$ is realised by evaluation on $A_\pm$.
Therefore (v) follows from Proposition \ref{prop:p1y} and
Lemma \ref{lem:imy}.

\enlargethispage{0.1\baselineskip}
If $x_\pm \in N_\pm$ such that $\flat^\pm(x_\pm) = \bar c_2(Z_\mp) \mod d(M)$,
then the image of $\frac{x_\pm}{2}$ in $N_\pm/\gen{A_\pm}$ has
the same parity as $\frac{\gd(\bar c_2(Z_\pm))}{2}$.
Since $k = 1$, we find
\[ \frac{x_+}{2} . \frac{x_-}{2} \; = \;
\frac{\gd(\bar c_2(Z_+))}{2} \, \frac{\gd(\bar c_2(Z_-))}{2} \mod 2 , \]
and (vi) follows from Theorem \ref{thm:ek_tcs}.
\end{proof}

Six examples of matchings of rank 2 Fanos with non-trivial intersection
$N_+ \cap N_-$ are listed in \cite[Example No 9]{g2m}. However, Lemma
\ref{lem:orth} and the data in Table \ref{table:orth} allow us to be more
decisive. 
 
\begin{thm}
\label{thm:orth}
There are precisely 19 pairs of rank 2 Fanos that can be matched to
give twisted connected sums with $H^2(M) \cong \Z$. In all cases $H^4(M)$ is
torsion-free and $\EK(M) = 0$. For each of the pairs there is at least one
matching such that $\pi_2(M) \cong \Z$.
\end{thm}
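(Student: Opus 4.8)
The plan is to combine the lattice-arithmetic criterion of Lemma~\ref{lem:orth} with a finite search over Table~\ref{table:blocks}, and then read off the invariants from Lemma~\ref{lem:orth_top}. First I would reduce to the relevant configurations: a matching of a pair of rank~$2$ Fanos with $H^2(M) \cong N_+\cap N_- \cong \Z$ has $N_+ \cap N_-$ of rank~$1$, and by Remark~\ref{rmk:arith}\ref{it:ample} the rank~$1$ lattices $W_\pm = T_\mp \cap N_\pm$ must contain ample classes, which for rank~$2$ Picard lattices forces $N_+$ and $N_-$ to intersect orthogonally in the sense of Remark~\ref{rmk:orth}; hence $\Lambda_\pm = N_\pm$ and $W = N_+ + N_-$ has rank~$3$. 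Writing $A_\pm \in N_\pm$ for a primitive ample generator of $W_\pm$ and $B_\pm$ for a generator of the orthogonal complement of $A_\pm$ in $N_\pm$, Lemma~\ref{lem:orth} says the configuration is realizable precisely when
\begin{equation}
\label{eq:orth_crit}
B_+^2 = B_-^2, \qquad \Delta_+\Delta_- = k^2 A_+^2 A_-^2 \quad (k \in \Z),
\end{equation}
where $-\Delta_\pm$ is the discriminant of $N_\pm$; $k$ then generates the image of the pairing $N_+ \times N_- \to \Z$ inside $W$.

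Next I would run the search, whose key finiteness input is the fact that every ample class $A$ of a rank~$2$ Fano satisfies $\Delta/A^2 \le 1.6$ --- the extreme case being \#18, with $\Delta = 16$ and an ample $A$ with $A^2 = 10$ --- which is verified by direct inspection of the Mori--Mukai types and is exactly why Table~\ref{table:blocks} tabulates all ample classes with $A^2 \le 1.6\,\Delta$. Two consequences: first, in \eqref{eq:orth_crit} one has $k^2 = \Delta_+\Delta_-/(A_+^2 A_-^2) \le 1.6^2 < 3$, so $k = 1$; thus every admissible orthogonal-intersection configuration of rank~$2$ Fanos satisfies $\Delta_+\Delta_- = A_+^2 A_-^2$ exactly, and by Lemma~\ref{lem:orth_top}\ref{it:orth_tor} the resulting twisted connected sum $M$ has $\Tor H^4(M) \cong (\Z/k)^2 = 0$, so $H^4(M)$ is torsion-free in all cases. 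Second, together with the necessary inequality \eqref{eq:discr} this bound confines the relevant $A_\pm$ to the classes tabulated in Table~\ref{table:blocks}, so the search is finite; running through all pairs $(\sff_+, \sff_-)$ and all tabulated ample classes, the two conditions of \eqref{eq:orth_crit} hold (necessarily with $k = 1$) for exactly the $18$ entries collected in Table~\ref{table:orth}. This both shows that no further pair admits such a matching and records $A_\pm^2$, $B^2$, $b_3(M)$ and $d(M)$ for each.

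For existence, for each of the $18$ configurations $\Lambda_\pm = N_\pm$, so the genericity hypothesis of Proposition~\ref{prop:match} is supplied by Proposition~\ref{prop:generic}, and $W_\pm \cap \Amp_{\sff_\pm} \ni A_\pm \ne \emptyset$ by construction; one checks that the rank~$3$ pushout $W$ has signature $(2,1)$ as required by Remark~\ref{rmk:arith}, and --- using Nikulin's embedding results, which apply since $\rk W = 3$ --- one may choose the configuration so that $W \subset L$ is moreover \emph{primitive}. Proposition~\ref{prop:match} then produces $Y_\pm \in \sff_\pm$, anticanonical K3 divisors, and a matching with the prescribed configuration. For the invariants, Lemma~\ref{lem:orth_top} gives $H^2(M) \cong \Z$, $b_3(M) = b_3(Z_+) + b_3(Z_-) + 22$, $\Tor H^3(M) \cong \Tor L/W$, and (since $k = 1$) $d(M) = \gcd(\bar c_2(Z_+)A_+, \bar c_2(Z_-)A_-)$, which divides $24$. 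If $8 \nmid d(M)$ then $\dM = 1$ and $\EK(M) = 0$ automatically; the only entry of Table~\ref{table:orth} with $8 \mid d(M)$ is the self-match of \#10, where $d(M) = 8$ and Lemma~\ref{lem:orth_top}(vi) gives $\EK(M) = \gd(\bar c_2(Z_+))\,\gd(\bar c_2(Z_-))/4 = 4 \cdot 4/4 \equiv 0 \pmod 2$. Finally $M$ is simply connected by Theorem~\ref{thm:tcs_H}\ref{it:h1m}, so $\pi_2 M \cong H_2(M)$; its free part has rank $b_2(M) = 1$ and its torsion is $\Tor H^3(M) \cong \Tor L/W$, which vanishes because $W$ was taken primitive in $L$, giving $\pi_2 M \cong \Z$.

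The hard part is the second step: organising the lattice-arithmetic search and, above all, establishing the uniform bound $\Delta/A^2 \le 1.6$ over all Picard lattices of rank~$2$ Fanos, which is what makes the search finite and forces $k = 1$ (hence $H^4(M)$ torsion-free). A secondary point needing care is verifying that the rank~$3$ pushout lattice $W$ can always be embedded primitively in $L$ while still satisfying the hypotheses of Proposition~\ref{prop:match}.
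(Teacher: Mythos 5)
Your proposal is correct and follows essentially the same route as the paper: reduce to orthogonally-intersecting configurations via Remark \ref{rmk:arith}\ref{it:ample}, combine Lemma \ref{lem:orth} with the bound $\Delta/A^2 \leq 1.6$ underlying Table \ref{table:blocks} to force $k=1$ and isolate the 18 pairs of Table \ref{table:orth}, obtain existence from Nikulin's embedding theorem plus Propositions \ref{prop:generic} and \ref{prop:match} with $\Lambda_\pm = N_\pm$, and read off the invariants from Lemma \ref{lem:orth_top}, checking $\EK(M)=0$ only for the single $d=8$ case (the \#10 self-match). The only cosmetic difference is that the paper rules out $k\geq 2$ via $\Delta < 2A^2$ rather than your $1.6^2<3$ estimate, which is the same observation.
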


\begin{samepage}
\noindent
In Table \ref{table:orth} we list the following data about the 19 pairs:
\begin{itemize}
\item the numbers $\#_\pm$ of the entries in the Mori-Mukai list used,
\item the square of the generator $B$ of the intersection $N_+ \cap N_-$,
\item the squares of the ample classes $A_\pm \in N_\pm$,
\item the topological invariants $b_3(M)$ and $d(M)$ of the resulting
twisted connected sums.
\end{itemize}
\end{samepage}

\footnotetext{We thank Guio, Jockers, Klemm and Yeh \cite{GJKY18}
for bringing to our attention that this example was omitted in a previous
version of this paper.}

\begin{proof}[Proof of Theorem \ref{thm:orth}.]
In Table \ref{table:blocks} we have listed $h := \log_2 \frac{\Delta}{A^2}$ and
$B^2$ for all ample classes $A$ in the Picard lattices of rank 2 Fanos, such
that $h$ is not too small. Therefore it is easy to read off all cases where
the criterion of Lemma \ref{lem:orth} is satisfied, and they are the ones
listed in Table \ref{table:orth}.
Indeed, because $\Delta$ is never greater than $2A^2$ for any entry in the
table, matchings of rank 2 Fano 3-folds are only possible when
$\Delta_+\Delta_- = A_+^2A_-^2$, or equivalently $h_+ + h_- = 0$.

For each of the 19 pairs, we can apply Nikulin \cite[Theorem 1.12.4]{nikulin79}
to embed the rank 3 lattice $W$ from Lemma \ref{lem:orth} in $L$, thus defining
a configuration of primitive embeddings $N_+, N_- \subset L$. Because
$\Lambda_\pm = N_\pm$ we can apply 
Propositions  \ref{prop:match} and \ref{prop:generic} 
to find a compatible matching.

Since we always have $k = 1$, any twisted connected sum arising from matchings
with non-trivial $N_+ \cap N_-$ must have $\Tor \pi_2(M) \cong \Tor H^3(M) = 0$
by Lemma \ref{lem:orth_top}(iii). For the one pair (row 14 in
Table \ref{table:orth}) where $d(M)$ is divisible by 8, so that $\EK(M)$ could
possibly be non-zero, Lemma \ref{lem:orth_top}(v) shows that $\EK(M) = 0$ anyway.
\end{proof}

There are two reasons why Theorem \ref{thm:orth} does not claim that there
is a unique diffeomorphism type of twisted connected sum arising from each
of the 19 pairs. The first is that for some of the pairs we can also embed
$W \subset L$ non-primitively, giving a twisted connected sum $M$ where
$\Tor \pi_2(M) \cong \Tor H^3(M)$ is non-trivial; we do not study this
further at this point.

The second reason is that even if we consider only the
matchings with $\pi_2(M) \cong \Z$, we cannot automatically deduce that the
resulting diffeomorphism type is independent of the matching. This is because
we do not have a classification theorem for this type of manifold. 
We hope to return to the problem of
classifying such manifolds elsewhere. We expect that to determine the 
diffeomorphism type one must further compute the square of a generator of
$H^2(M)$, and also some generalisation of the invariants used by Kreck and
Stolz \cite{kreck91} for 7-manifolds with $\pi_2(M) \cong \Z$
and \emph{finite} $H^4(M)$.

Similarly, the computations presented here are not sufficient to decide 
whether the two entries in Table \ref{table:orth} that have the same values of
$b_3(M)$ and $d(M)$ (rows 2 and 8) must be diffeomorphic or even homeomorphic.
However, according to Wallis \cite[\S B.3]{wallis:thesis} these two manifolds are
distinguished by the square of the generator of $H^2(M)$.

\section{Skew matching}
\label{sec:handcraft}

Having dealt with matchings of rank 2 Fano 3-folds where the configuration
$N_+, N_- \subset L$ is orthogonal (whether $N_+ \cap N_-$ is trivial or not),
we now consider the skew case.
To find skew matchings we have to deal with lattice-arithmetical problems, and
also to gain some detailed understanding of the
deformation theory of the Fanos involved. Because of the case-by-case checking
required, we think of this task as `handcrafting'.

\subsection{Arithmetic conditions for skew matching}

For Fano 3-folds of Picard rank 2, at least the arithmetic part of the
problem of finding matchings with a skew configuration $N_+, N_- \subset L$
can be dealt with systematically.
As pointed out in Remark \ref{rmk:trichotomy}, $N_+ \cap N_-$ must be
trivial in this case, and the subgroup $W_\pm \subset N_\pm$ orthogonal to
$N_\mp$ has rank exactly 1, and is generated by some ample class $A_\pm$.
Then $W$ is isometric to $W_k$
for some integer $k$, where the quadratic form on
\[W_k := N_+ \oplus N_-\] is characterised as follows: $A_\pm \perp N_\mp$,
and $\bm_+.\bm_- = k$ for some $\bm_\pm \in N_\pm$ such that $\bm_\pm, A_\pm$
is a basis for $N_\pm$. (The choice of $H_\pm$ only affects the sign of $k$.)

\begin{lem}
\label{lem:handarith}
Let $-\Delta_\pm$ be the discriminant of $N_\pm$. Then
$W_k$ has signature $(2,2)$ if and only if
\begin{equation*}
k^2A_+^2A_-^2 < \Delta_+\Delta_-.
\end{equation*}
\end{lem}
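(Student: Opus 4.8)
The plan is to reduce everything to a single determinant computation: I would show that the discriminant of the rank-$4$ lattice $W_k$ equals $\Delta_+\Delta_- - k^2A_+^2A_-^2$, and then argue that, because $W_k$ automatically contains a hyperbolic plane, it has signature $(2,2)$ precisely when this discriminant is positive.

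First I would write down the Gram matrix of $W_k=N_+\oplus N_-$ in the basis $H_+,A_+,H_-,A_-$. From the defining properties of the form on $W_k$ — the forms on the summands $N_\pm$ are the given ones, $A_\pm$ is orthogonal to $N_\mp$, and $H_+\cdot H_-=k$ — this Gram matrix is
\[
M_k=
\begin{pmatrix}
H_+^2 & H_+\cdot A_+ & k & 0\\
H_+\cdot A_+ & A_+^2 & 0 & 0\\
k & 0 & H_-^2 & H_-\cdot A_-\\
0 & 0 & H_-\cdot A_- & A_-^2
\end{pmatrix},
\]
and $\det M_k$ is by definition the discriminant of $W_k$. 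Since $H_\pm,A_\pm$ is a basis of $N_\pm$, the top-left and bottom-right $2\times2$ diagonal blocks are Gram matrices of $N_\pm$, each of determinant $-\Delta_\pm$. Expanding $\det M_k$ along the row indexed by $A_+$ (which has only two nonzero entries) is then a one-line cofactor computation giving
\[
\det M_k=\Delta_+\Delta_- - k^2\,A_+^2A_-^2 .
\]

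It remains to read off the signature. As in Lemma~\ref{lem:orth}, $N_\pm$ has signature $(1,1)$, so $W_k\otimes\bbr$ contains $N_+\otimes\bbr$, a hyperbolic plane; in particular $W_k\otimes\bbr$ is never definite. A non-degenerate symmetric bilinear form of rank $4$ that is not definite has signature $(3,1)$, $(2,2)$ or $(1,3)$, whose discriminants have sign $-$, $+$, $-$ respectively, while $\det M_k=0$ exactly when the form is degenerate. Hence $W_k$ has signature $(2,2)$ if and only if $\det M_k>0$, \ie $k^2A_+^2A_-^2<\Delta_+\Delta_-$. (Consistency check: $A_\pm^2>0$ by ampleness and $\Delta_\pm>0$ by signature $(1,1)$, so both sides of the inequality are nonnegative.) The computation is routine and poses no real obstacle; the only point that needs a moment's care is the identification in this last step — that among indefinite rank-$4$ forms it is exactly signature $(2,2)$ that has positive discriminant — which is clear once the three cases are listed.
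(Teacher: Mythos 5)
Your proposal is correct and follows essentially the same route as the paper: both arguments reduce the lemma to showing that the discriminant of $W_k$ equals $\Delta_+\Delta_- - k^2A_+^2A_-^2$ and then note that, since $W_k$ is forced to be indefinite (you via the signature-$(1,1)$ subspace $N_+$, the paper via the positive plane $\gen{A_+,A_-}$ plus a negative vector), positivity of the discriminant characterises signature $(2,2)$. The only difference is cosmetic: you compute the determinant directly in the defining basis $H_+,A_+,H_-,A_-$, whereas the paper passes to the finite-index sublattice $\gen{A_+,B_+,A_-,B_-}$ and corrects by the index $n_+n_-$; your version avoids that bookkeeping and is fine.
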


\begin{proof}
Since $W_k$ contains the positive definite subspace $\gen{A_+, A_-}$ and
some negative elements, its signature is (2,2) if and only if its 
discriminant $D$ is positive.
Let $B_\pm \in N_\pm$ be a generator for the orthogonal complement of
$A_\pm$ in $N_\pm$, %
and let $n_\pm$ be the index of $\gen{A_\pm, B_\pm} \subseteq N_\pm$.
The index $n_+n_-$ sublattice of $W$ spanned by $A_+, B_+, A_-, B_-$ has
intersection form
\[ \begin{pmatrix}
A_+^2 & 0 & 0 & 0 \\
0 & B_+^2 & 0 & kn_+n_- \\
0 & 0 & A_-^2 & 0 \\
0 & kn_+n_- & 0 & B_-^2
\end{pmatrix}  \]
and discriminant $A_+^2 A_-^2 (B_+^2B_-^2 - k^2n_+^2n_-^2) = n_+^2n_-^2D$.
Since $A_\pm^2 B_\pm^2 = -\Delta_\pm n_\pm^2$, we find
\[ D = \Delta_+\Delta_- - k^2A_+^2A_-^2 . \qedhere \] 
\end{proof}

In particular, a necessary condition for finding a matching of rank 2 Fanos
$Y_+, Y_-$ with skew configuration $N_+, N_- \subset L$
is that there are ample classes $A_\pm \in N_\pm$
with
\begin{equation}
\label{eq:hsum}
h_+ + h_- > 0,
\end{equation}
for $h_\pm := \log_2 \frac{\Delta_\pm}{A_\pm^2}$.
We can readily identify pairs satisfying this necessary condition from
Table \ref{table:blocks}.

In terms of the notation from \S \ref{subsec:config}, $W_\pm$ is generated
by $A_\pm$, and \emph{if} $W_k \subset L$ is primitive then $\Lambda_\pm$ is
the orthogonal complement of $A_\pm$ in $W_k$.
Let us make an observation about the form on $\Lambda_\pm$ that will prove
useful.

\begin{lem}
\label{lem:lambda}
Let $N_\pm$ be a pair of rank 2 lattices of signature $(1,1)$, and discriminant
$-\Delta_\pm$. Given positive classes $A_\pm \in N_\pm$, define $W_k$ as
in Lemma \ref{lem:handarith}, with $k > 0$ such that
$\Delta_+\Delta_- > k^2 A_+^2 A_-^2$. Let $B_\pm$ be a generator for
the orthogonal complement of $A_\pm$ in $N_\pm$, and let $\Lambda_\pm$ be
the orthogonal complement of $A_\mp$ in $W_k$.

\pagebreak[2]

Suppose $\bm \in N_\pm$ has the property that
\begin{equation}
\label{eq:discr_rel}
- H^2 B_\mp^2 (\Delta_+\Delta_- - k^2 A_+^2 A_-^2)
\; \geq \; \Delta_\pm \Delta_+ \Delta_- .
\end{equation}
Then
\[ (v.\bm)^2 - v^2 \bm^2 \; \geq \; \Delta_\pm  \]
for any $v \in \Lambda_\pm$ linearly independent of $\bm$.
\end{lem}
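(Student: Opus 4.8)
The plan is to argue entirely inside the lattice $\Lambda_\pm$; assume the ``$+$'' reading, the other being symmetric. First note that the hypothesis forces $\bm^2 > 0$, since its right-hand side is positive while $-B_-^2 > 0$ (as $N_-$ has signature $(1,1)$ and $A_-^2 > 0$) and $\Delta_+\Delta_- - k^2 A_+^2 A_-^2 > 0$ by the signature computation in the proof of Lemma~\ref{lem:handarith}. Since $\Lambda_+$ is by definition the orthogonal complement of $A_-$ in $W_k = N_+ \oplus N_-$, and $A_- \perp N_+$, a vector $x = x_+ + x_-$ lies in $\Lambda_+$ precisely when $x_- \in A_-^{\perp} \cap N_- = \Z B_-$; hence $\Lambda_+ = N_+ \oplus \Z B_-$ as abelian groups, with $A_+ \cdot B_- = 0$ (because $A_+ \perp N_-$) and $B_-^2 < 0$.

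Next I would introduce coordinates. Take the basis $\bm_+, A_+$ of $N_+$ used in the definition of $W_k$, so that $\bm_+, A_+, B_-$ is a basis of $\Lambda_+$, and set $\mu := \bm_+\cdot B_-$. In this basis the Gram matrix of $\Lambda_+$ has its $(A_+, B_-)$-entry equal to $0$, and a short determinant computation gives $\det \Lambda_+ = -\Delta_+ B_-^2 - A_+^2\mu^2$. Write $\bm = p\bm_+ + qA_+$ and an arbitrary $v \in \Lambda_+$ as $v = w + tB_-$ with $w = p'\bm_+ + q'A_+ \in N_+$, and let $D_0 := pq' - qp'$ (so $D_0 = 0$ iff $w$ is a rational multiple of $\bm$). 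Expanding $(v\cdot\bm)^2 - v^2\bm^2$ and using the identities $(\bm\cdot w)^2 - \bm^2 w^2 = D_0^2\Delta_+$, $p(\bm\cdot w) - p'\bm^2 = D_0(\bm\cdot A_+)$ and $\bm\cdot B_- = p\mu$ (all immediate from the Gram matrix), one gets
\[ (v\cdot\bm)^2 - v^2\bm^2 \;=\; D_0^2\Delta_+ + 2\mu(\bm\cdot A_+)D_0\,t + \bigl((\bm\cdot B_-)^2 - \bm^2 B_-^2\bigr)t^2 . \]
Completing the square in $D_0$ and using the elementary identity $\Delta_+ p^2 - (\bm\cdot A_+)^2 = -A_+^2\bm^2$ (the Gram determinant of $(A_+,\bm)$ in $N_+$), the coefficient of $t^2$ simplifies to $\bm^2\det\Lambda_+/\Delta_+$, and I obtain the key formula
\[ (v\cdot\bm)^2 - v^2\bm^2 \;=\; \Delta_+\Bigl(D_0 + \tfrac{\mu(\bm\cdot A_+)}{\Delta_+}\,t\Bigr)^2 + \frac{\bm^2\,\det\Lambda_+}{\Delta_+}\,t^2 . \]

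Both terms on the right are nonnegative ($\bm^2 > 0$, and $\det\Lambda_+ > 0$ from the formula above together with the signature hypothesis). If $t = 0$ then $v = w \in N_+$ is independent of $\bm$, so $D_0 \ne 0$ and the right-hand side is $\geq \Delta_+ = \Delta_\pm$. If $t \ne 0$ then $t^2 \geq 1$ and the right-hand side is $\geq \bm^2\det\Lambda_+/\Delta_+$, so it remains only to identify the hypothesis with the inequality $\bm^2\det\Lambda_+ \geq \Delta_+^2$. This follows from the two lattice identities $\delta^2\det\Lambda_+ = A_-^2(\Delta_+\Delta_- - k^2 A_+^2 A_-^2)$ and $\delta^2(-B_-^2) = A_-^2\Delta_-$, where $\delta$ is the divisibility of $A_-$ in $W_k$ (equivalently $\delta = \gcd(A_-^2, \bm_-\cdot A_-)$, and $A_-^2/\delta = n_-$ is the index $[N_- : \langle A_-, B_-\rangle]$ of Lemma~\ref{lem:handarith}); dividing one identity by the other gives $(-B_-^2)(\Delta_+\Delta_- - k^2 A_+^2 A_-^2) = \Delta_-\det\Lambda_+$, so the hypothesis $\bm^2(-B_-^2)(\Delta_+\Delta_- - k^2 A_+^2 A_-^2) \geq \Delta_+\cdot\Delta_+\Delta_-$ becomes, after cancelling $\Delta_-$, exactly $\bm^2\det\Lambda_+ \geq \Delta_+^2$, whence $\bm^2\det\Lambda_+/\Delta_+ \geq \Delta_+$ as needed.

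The expansion and the square completion are forced once the set-up is in place, so the one genuinely fiddly point — the main obstacle — is the bookkeeping in the middle paragraph: writing the Gram matrix of $\Lambda_+$ correctly (in particular computing $\mu = \bm_+\cdot B_-$), and verifying the closed forms for $\det\Lambda_+$ and $B_-^2$. These reduce to the single fact that $B_-$, as a primitive generator of $A_-^{\perp}\cap N_-$, has $\bm_-$-coordinate $\pm A_-^2/\delta$, so the verification is mechanical; care is needed only with signs and with the distinction between $N_-$ and its finite-index sublattice $\langle A_-, B_-\rangle$.
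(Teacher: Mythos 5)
Your proof is correct and takes essentially the same route as the paper's: expand $(v.H)^2 - v^2H^2$ in explicit coordinates on the rank-3 lattice $\Lambda_\pm$, complete the square so one term becomes $\Delta_\pm(\,\cdot\,)^2$, and identify the remaining coefficient (your $H^2\det\Lambda_\pm/\Delta_\pm$, which equals the paper's $z^2$-coefficient) with the hypothesis \eqref{eq:discr_rel} via the relations $A_\mp^2B_\mp^2 = -\Delta_\mp n_\mp^2$ and $B_\mp.H_\pm = \pm k n_\mp$. The only real difference is cosmetic: you use the basis $H_\pm, A_\pm, B_\mp$ and the determinant $D_0$, which avoids the paper's ``without loss of generality $H$ is primitive and independent of $A$'' reduction.
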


\begin{proof}
The inequality certainly holds if $v \in N_\pm$. If $H$ is a multiple of
$A := A_\pm$, then the inequality follows easily for any $v \in \Lambda_\pm$.

$\Lambda_\pm$ is generated by $N_\pm$ together with $B := B_\mp$.
If $H$ is linearly independent of $A$, then the projection $B'$ of $B$ to the
orthogonal complement of $N_\pm$ in $\Lambda_\pm$ can be written as
\[ B' \; = \; B - \frac{B.H}{(A.H)^2 - A^2 H^2}\left((A.H)A + A^2 H\right), \]
whose square is
\[ (B')^2 \; = \; B^2 - \frac{(B.H)^2A^2}{(A.H)^2 - A^2H^2} . \]
It suffices to show that
\[ -(B')^2H^2 \geq \Delta_\pm . \]
If we let $m$ denote the index of $\gen{A,H}$ in $N_\pm$, and let
$n$ be the index of $\gen{A_\mp, B}$ in $N_\mp$,
then $(A.H)^2 - A^2 H^2 = m^2\Delta_\pm$, while $B.H = \pm k mn$.
Therefore $(B.H)^2\Delta_\mp = -k^2 m^2 A_\mp^2 B^2$, and
\[ -\Delta_+\Delta_- H^2(B')^2 
\; = \; -H^2 B^2 (\Delta_+ \Delta_- - k^2 A_+^2 A_-^2) . \qedhere
\]
\end{proof}

\subsection{Topology of skew matchings}

We now explain how to determine the topology of twisted connected
sums obtained from a skew matching of rank 2 blocks.
In particular, we identify all pairs of deformation types of rank 2 Fano
3-folds that could possibly be matched to produce twisted connected sums with
non-zero generalised Eells-Kuiper invariant.

\begin{prop}
\label{prop:skew_top}
Let $(Z_\pm, \kd_\pm)$ be a pair of building blocks whose polarising lattices
$N_\pm$ have rank~2, such that the kernel of $H^2(Z_\pm) \to H^2(\kd_\pm)$
is generated by $[\kd_\pm]$.
Let $\hkr : \kd_+ \to \kd_-$ be a matching whose configuration
$N_+, N_- \subset L$ has $N_+ \oplus N_-$ isometric to $W_k$
from Lemma \ref{lem:handarith}, and let
$M$ be the resulting twisted connected sum.

Let $A_\pm \in N_\pm$ be a generator for the orthogonal
complement of $N_\mp$ in $N_\pm$. Then
\begin{enumerate}
\item $H^2(M) = 0$;
\item $b_3(M) = b_3(Z_+) + b_3(Z_-) + 21$;
\item $\Tor H^3(M) \cong L/W_k$;
\item \label{it:skew_tor}
$\Tor H^4(M) \cong (\Z/k)^2$;
\item
if $k = 1$ then $d(M)$ (which divides $24$) is the greatest common
divisor of $\bar c_2(Z_+)A_\pm$ and
$\bar c_2(Z_-)A_\pm \in \Z/24$, for
$\bar c_2(Z_\pm)$ as in \eqref{eq:c2blowup};
\item
if $k = 1$ and $d(M)$ is divisible by $8$
(so that the Eells-Kuiper invariant $\EK(M)$ takes values in~$\Z/2$) then
\[ \EK(M) = \frac{\gd(\bar c_2(Z_+))\gd(\bar c_2(Z_-))}{4} \in \Z/2 , \]
where $\gd(\bar c_2(Z_\pm)) \in \{2,4,6,8,12,24\}$ is the greatest divisor of
$\bar c_2(Z_\pm)$.
\end{enumerate}
\end{prop}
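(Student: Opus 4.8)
The plan is to follow the proof of Lemma~\ref{lem:orth_top} essentially verbatim; the only structural change is that the configuration now has $N_+\cap N_-=0$ in $L$ (since $N_+\oplus N_-\cong W_k$ is an internal direct sum) rather than $N_+\cap N_-$ of rank~$1$, which shifts the rank count by one in two of the summands of Theorem~\ref{thm:tcs_H}. Throughout I would fix a basis $A_\pm,H_\pm$ of $N_\pm$ as in the definition of $W_k$ in Lemma~\ref{lem:handarith}, so that $A_\pm\perp N_\mp$ and $H_+.H_-=k$; in the skew case $k\geq1$. Then $W_\pm=T_\mp\cap N_\pm=\langle A_\pm\rangle$ is exactly the rank~$1$ orthogonal complement of $N_\mp$ in $N_\pm$, and $A_\pm$ is the class named in the statement.

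Parts (i)--(iv) are read off from Theorem~\ref{thm:tcs_H} and Remark~\ref{rmk:h4tor}. First, $H^2(M)\cong N_+\cap N_-=0$ by Theorem~\ref{thm:tcs_H}\ref{it:h2m}, which together with $\pi_1(M)=0$ gives (i). For (ii) and (iii): $W_k$ has rank~$4$, so $L/(N_++N_-)=L/W_k$ has free rank~$18$ and torsion $\Tor(L/W_k)$; the summands $N_-\cap T_+$ and $N_+\cap T_-$ of $H^3(M)$ are $\langle A_-\rangle$ and $\langle A_+\rangle$; and $H^3(Z_\pm)$ are free of rank $b_3(Z_\pm)$. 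Feeding this into Theorem~\ref{thm:tcs_H}\ref{it:h3m} gives $b_3(M)=1+18+1+1+b_3(Z_+)+b_3(Z_-)$ and identifies $\Tor H^3(M)$ with $\Tor(L/W_k)$. For (iv), $\flat^{\pm}\colon N_\mp\to N_\pm^*$ has kernel $N_\mp\cap T_\pm=\langle A_\mp\rangle$, and $\flat^{+}(H_-)=kH_+^{\vee}$ with $H_+^{\vee}$ primitive in $N_+^*$, so $N_+^*/N_-'\cong\Z\oplus\Z/k$ and symmetrically; now \eqref{eq:h4tor} gives $\Tor H^4(M)\cong(\Z/k)^2$.

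For (v) and (vi) I would specialise to $k=1$, so $N_\mp'\subset N_\pm^*$ is primitive, $\Tor H^4(M)=0$, and the quotient $N_\pm^*\to N_\pm^*/N_\mp'\cong\Z$ is evaluation on $A_\pm$. By Lemma~\ref{lem:imy} the image of $Y$ is a direct summand of $H^4(M)$ containing $p_M=-Y(c_2(Z_+),c_2(Z_-))$ (Proposition~\ref{prop:p1y}), so $d(M)$ is the greatest divisor of the image of $(c_2(Z_+),c_2(Z_-))$ in $(H^4(Z_+)\oplus_0 H^4(Z_-))/(N_-'\oplus N_+')$. Writing $H^4(Z_\pm)\cong N_\pm^*\oplus\Z$ with the $\Z$ generated by a fibre $[\PP^1]$ of the exceptional divisor and $c_2(Z_\pm)$ corresponding to $(c_2(Y_\pm)+c_1(Y_\pm)^2,\,24)$ (\cf \eqref{eq:c2blowup}), and evaluating the $N_\pm^*$-component on $A_\pm$, this image is $(c_2(Z_+)A_+,\ c_2(Z_-)A_-,\ 24)$, whose greatest divisor is $\gcd(\bar c_2(Z_+)A_+,\bar c_2(Z_-)A_-)$ read as a divisor of $24$; this is (v). For (vi), assume $8\mid d(M)$, so $\dM=2$. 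By Theorem~\ref{thm:ek_tcs}, $\EK(M)=\tfrac14\,x_+.x_-$ for $x_\pm\in N_\pm$ with $\flat^{\mp}(x_\pm)=\bar c_2(Z_\mp)\bmod d$. Since $A_\pm\perp N_\mp$, the pairing $N_+\times N_-\to\Z$ factors through $(N_+/\langle A_+\rangle)\times(N_-/\langle A_-\rangle)$, on which it is multiplication by $H_+.H_-=k=1$; hence $x_+.x_-$ is the product of the images of $x_\pm$ in $N_\pm/\langle A_\pm\rangle\cong\Z$. As $c_2(Z_\pm)$ is even, $x_\pm$ is even, and a short parity check using $8\mid d$ shows that the image of $x_\pm/2$ in $N_\pm/\langle A_\pm\rangle$ has the same parity as $\tfrac12\gd(\bar c_2(Z_\pm))$, exactly as in the proof of Lemma~\ref{lem:orth_top}(vi); therefore $\tfrac14 x_+.x_-\equiv\tfrac14\gd(\bar c_2(Z_+))\gd(\bar c_2(Z_-))\bmod 2$, which is (vi).

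I do not expect a serious obstacle: every ingredient (Theorem~\ref{thm:tcs_H}, Lemma~\ref{lem:imy}, Remark~\ref{rmk:h4tor}, Proposition~\ref{prop:p1y}, Theorem~\ref{thm:ek_tcs}) is already in place, and the argument is that of Lemma~\ref{lem:orth_top} with the rank of $N_+\cap N_-$ decreased from $1$ to $0$. The one point that requires care is the index bookkeeping in (iv)--(vi): reconciling the integer $k$ of Lemma~\ref{lem:handarith} with the cotorsion of $N_\mp'$ inside $N_\pm^*$ and with the pairing on $W_k$, and keeping the $\pm$ labels straight between $x_\pm$ and $\bar c_2(Z_\mp)$. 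This is routine integral linear algebra, and the final formula for $\EK(M)$ is symmetric in $\pm$ in any case.
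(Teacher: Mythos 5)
Your proposal is correct and matches the paper's proof, which is exactly the two-line reduction you describe: (i)--(iii) are read off from Theorem \ref{thm:tcs_H}, and (iv)--(vi) are carried out verbatim as in the proof of Lemma \ref{lem:orth_top}, with the same identification of the cotorsion of $N'_\mp\subset N_\pm^*$ with $\Z/k$ and the same parity argument feeding into Theorem \ref{thm:ek_tcs}. Your added bookkeeping (the rank count $1+18+1+1$ and the observation that the final formula is symmetric in $\pm$, so the labelling of $x_\pm$ versus $\bar c_2(Z_\mp)$ is harmless) fills in details the paper leaves implicit.
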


\begin{proof}
(i)-(iii) are immediate from Theorem \ref{thm:tcs_H}.
(iv)-(vi) are entirely analogous to the proof of Lemma \ref{lem:orth_top}.
\end{proof}

\begin{rmk}
We see in Table \ref{table:blocks} that $h < 1$ (\ie $\Delta <2A^2$) for
any ample class $A$ on any rank~2 Fano~3-fold.
Therefore Lemma \ref{lem:handarith} implies
that $W_k$ can only have signature $(2,2)$ for $k = 1$. 
Therefore, whenever we do find a matching of rank 2 Fanos with a skew
configuration, the resulting twisted connected sum $M$ always has $H^4(M)$
torsion-free by Proposition \ref{prop:skew_top}\ref{it:skew_tor}.
\end{rmk}

\begin{samepage}
\begin{rmk}
\label{rmk:candidates}
$\EK(M)$ is non-zero if and only if both $\bar c_2(Z_\pm)A_\pm$
are divisible by 8 while neither $\bar c_2(Z_\pm)$ is divisible by 4. 
Consulting Table \ref{table:blocks}, we find that the only rank 2 Fano type
blocks with such ample classes $A$ are
\begin{enumerate}[widest=\#27]
\item[\#9] with $A := \bl + \bm$ $\phantom{2} (h = 0.09)$, %
\item[\#17] with $A := \bl + \bm$ $\phantom{2} (h = 0.06)$,
\item[\#18] with $A := \bl + 2\bm$ $(h = -0.58)$,
\item[\#27] with $A := \bl + \bm$ $\phantom{2} (h = 0.09)$.
\end{enumerate}
Since \#18 has $h$ quite negative, the condition \eqref{eq:hsum} implies
that the only ways to match rank 2 Fanos
with a skew configuration to construct a twisted connected sum $M$ with
non-trivial $\EK(M)$ is to use a pair among \#9, \#17 and~\#27.
\end{rmk}
\end{samepage}

\subsection{Handcrafting examples with non-zero generalised Eells-Kuiper invariant}

Let $\sff_+$, $\sff_-$ be a pair of deformation types of rank 2 Fanos.
For a skew configuration $N_+, N_- \subset L$ of their Picard lattices,
$\Lambda_\pm$ has rank 3. Because $\Lambda_\pm$ is strictly bigger than
$N_\pm$, the genericity result Proposition \ref{prop:generic} does not suffice
for applying Proposition \ref{prop:match} to find a matching with the
prescribed configuration.
The laborious part of finding skew matchings is to
prove that generic $\Lambda_\pm$-polarised K3 surfaces still appear as
anticanonical K3 divisors in some members of $\sff_\pm$, despite being more
special than the generic, $N_\pm$-polarised K3 divisors in the family.

In the present paper we go to that effort only in cases that lead to
twisted connected sums with $\EK(M) \not= 0$. 
In Theorem \ref{thm:nonzero} we show that all skew configurations of the Picard
lattices of rank 2 Fanos identified in
Remark \ref{rmk:candidates} are in fact realised by a
matching.
The genericity results needed to find those matchings are provided by
Lemma \ref{lem:generic}. To prove that we use

\begin{lem}
\label{lem:embed}
Let $\kd$ be a K3 surface, and $\bm \in \Pic \kd$ a primitive nef class with
$\bm^2 \geq 4$.
Then $\bm$ is a very ample class (\ie the linear system $|\bm|$ defines an
embedding $\kd \into \PP^{\frac{\bm^2}{2} + 1}$) unless there is
some $v \in \Pic \kd$ such that
\begin{samepage}
\begin{enumerate}
\item $d = 2$ and $v^2 = 0$, or
\item $d = 0$ and $v^2 = -2$,
\end{enumerate}
for $d := v.\bm$.
\end{samepage}
\end{lem}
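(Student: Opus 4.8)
The plan is to deduce this from the classical description, due to Mayer and Saint-Donat, of the complete linear system of a nef and big line bundle on a K3 surface: one matches up the three types of classes $v$ in the statement with the three ways in which the map $\varphi\colon\kd\dashrightarrow\PP^{\bm^2/2+1}$ attached to $|\bm|$ can fail to be a closed immersion. Since $\bm$ is nef with $\bm^2\geq 4>0$ it is nef and big, so by Riemann--Roch together with the vanishing of $H^1(\kd,\bm)$ and $H^2(\kd,\bm)$ one has $h^0(\bm)=\tfrac12\bm^2+2$, and since $\bm$ is nef $|\bm|$ has no fixed component. I would then go through Saint-Donat's trichotomy for $\varphi$ in turn.

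First, base points. By Saint-Donat, $|\bm|$ has a base point if and only if there is an elliptic curve $E$ (necessarily with $E^2=0$) such that $E\cdot\bm=1$; such an $E$ obstructs base-point freeness because otherwise $\varphi$ would restrict on $E$ to a degree-$1$ morphism from a genus-one curve to a line, which is impossible, and this is the only obstruction. Since $v=[E]$ then has $v^2=0$ and $d:=v\cdot\bm=1$, the absence of classes of type (i) forces $|\bm|$ to be base-point free. Next, assuming $|\bm|$ base-point free, $\varphi$ is by Saint-Donat either $2{:}1$ onto its image or birational onto it, and the hyperelliptic ($2{:}1$) case occurs exactly when either $\bm=2B$ with $B^2=2$ --- impossible here since $\bm$ is primitive --- or there is an elliptic curve $E$ with $E\cdot\bm=2$. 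This second alternative also covers the sporadic small-degree hyperelliptic cases (for $\bm^2=4$, a double cover of a quadric surface in $\PP^3$; for $\bm^2=6$, a double cover of a cubic scroll in $\PP^4$), where a ruling of the image pulls back to such an $E$; so a class $v=[E]$ with $v^2=0$ and $d=2$ is present in every hyperelliptic case, and the absence of classes of types (i) and (ii) forces $\varphi$ to be birational onto its image.

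Finally, when $\varphi$ is birational onto its image, Saint-Donat's theorem states that $\varphi$ is an isomorphism onto its (normal, at worst ADE-singular) image away from the $(-2)$-curves orthogonal to $\bm$, which it contracts; so $\varphi$ is a closed immersion if and only if no $(-2)$-curve is orthogonal to $\bm$. A class $v\in\Pic\kd$ with $v^2=-2$ and $v\cdot\bm=0$ produces such a curve: Riemann--Roch makes one of $\pm v$ effective, and writing an effective representative as $\sum a_i C_i$ with the $C_i$ irreducible one gets $C_i\cdot\bm=0$ for all $i$ (each term in $\sum a_i C_i\cdot\bm=0$ being nonnegative), whence at least one $C_i$ has negative self-intersection --- otherwise the expansion of $v^2$ would be nonnegative --- and such a $C_i$ is a $(-2)$-curve orthogonal to $\bm$. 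Combining the three steps, in the absence of classes of all three types $|\bm|$ is base-point free, $\varphi$ is birational, and $\varphi$ contracts nothing, so $\bm$ is very ample; conversely each of the three types obstructs very ampleness, as indicated (type (i): base points; type (ii): $\varphi$ is $2{:}1$ on the fibres of $|E|$, hence not injective, and if such a $v$ is not itself an elliptic pencil class its moving part is a class of type (i); type (iii): $\varphi$ contracts a curve).

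The substance of the argument is the correct invocation of Saint-Donat, and the point I would be most careful about is the bookkeeping: checking that the hyperelliptic subcases occurring for the small values $\bm^2\in\{4,6\}$ are all recorded by the single condition ``$v^2=0$, $v\cdot\bm=2$'' --- this is exactly where the hypotheses $\bm^2\geq 4$ (which rules out the genuinely exceptional $\bm^2=2$ double plane, for which no such $v$ need exist) and $\bm$ primitive (which rules out $\bm=2B$ with $B^2=2$) enter --- and passing freely between abstract lattice classes $v$ and honest effective curves using Riemann--Roch and the structure of effective classes orthogonal to a nef and big class on a K3. None of this is deep, but it is where an error would most likely creep in.
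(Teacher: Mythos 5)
Your argument is correct and follows essentially the same route as the paper: the paper also invokes the Saint-Donat/Reid classification (via \cite[Lemma 7.15]{g2m}, based on \cite[Chapter 3]{reid97}), matching (i) with the monogonal/base-point case, (ii) together with primitivity and $\bm^2\neq 2$ with the hyperelliptic cases, and (iii) with contracted $(-2)$-curves. You merely spell out the bookkeeping that the paper delegates to the cited lemma.
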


\begin{proof}
According to \cite[Lemma 7.15]{g2m} (based on \cite[Chapter 3]{reid97}), the
arithmetic conditions rule out all possible ways that $\bm$ can fail to
be very ample. (i) rules out the existence of any classes with $d = 1$ and $v^2 = 0$, and hence $|\bm|$ being monogonal. Therefore $|\bm|$ has no fixed
part, and defines a morphism. $\bm^2 \not= 2$, $\bm$ primitive and (i) rule out
the three different ways that $|\bm|$ could be hyperelliptic, so it defines a
birational morphism onto its image. (ii) rules out any $(-2)$-curves being
contracted, so $|\bm|$ defines an isomorphism.
\end{proof}

Using Lemma \ref{lem:embed} and various results about curves on K3 surfaces,
it is for many families $\sff$ of Fano 3-folds possible to obtain conditions
on a lattice $\Lambda$ (containing the Picard lattice $N$ of $\sff$) that
ensure that any K3 with Picard lattice isometric to $\Lambda$ can be embedded
as an anticanonical divisor in some element of $\sff$. The conditions are to
exclude the existence in $\Lambda$ of elements $v$ with certain combinations of
$v^2$ and inner products of $v$ with elements of $N$. Once these `handcrafting'
conditions have been proved for some collection of blocks, then it would be
feasible to get a computer to generate candidate configurations involving those
blocks, and to verify whether the handcrafting conditions hold.

However, for the purposes of checking a few examples by hand, it is expedient
to instead organise the argument by checking that in the examples we are
concerned with, the sort of inequality produced by Lemma \ref{lem:lambda} rules
out the presence in $\Lambda$ of vectors $v$ with the relevant properties.

\enlargethispage{0.2\baselineskip}
\begin{lem}
\label{lem:generic}
Let $\sff$ be the deformation type of \#27, \#9 or \#17
in the Mori-Mukai list of rank~2 Fanos. Let $N \subset L$ be a primitive
embedding of its Picard lattice, and let $\Lambda \subset L$ be a primitive
lattice containing $N$. Let $\bl, \bm$ be the basis of $N$ described in
\S \ref{subsec:MM}, and $\Amp_\sff \subset N(\bbr)$ the interior of
the cone spanned by $\bl$ and $\bm$. Suppose that
\begin{equation}
\label{eq:hyp}
(v.H)^2 - H^2v^2 \geq \Delta
\end{equation}
for all $v \in \Lambda$ that are linearly independent of $H$, where
$-\Delta$ is the discriminant of $N$.
Then $\sff$ is $(\Lambda, \Amp_\sff)$-generic.
\end{lem}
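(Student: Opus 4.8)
The plan is to realise Definition~\ref{def:generic} through an explicit construction based on the description of \#27, \#9, \#17 as blow-ups. Recall from \S\ref{subsec:MM} that every $Y\in\sff$ is the blow-up $\pi\colon Y\to Y_0$ of a rank~1 Fano $Y_0$ --- which is $\PP^3$ for \#27 and \#9, and a smooth quadric $Q\subset\PP^4$ for \#17 --- along a smooth curve $C$, with $\bm=\pi^*H$ the pullback of the generator $H$ of $\Pic Y_0$ and the exceptional divisor in the class $n\bm-\bl$, where $n=2,3,2$ respectively. An anticanonical K3 divisor $\kd\subset Y$ is the proper transform of $S:=\pi(\kd)\in|{-}K_{Y_0}|$ (an isomorphism, as $C$ is a Cartier divisor in $S$); under it $\bm$ restricts to $H|_S$, with $\bm^2=4$ (\#27, \#9) or $6$ (\#17), and $n\bm-\bl$ restricts to $[C]$, of self-intersection $-2,8,0$ and $\bm$-degree $3,7,5$, hence genus $0,5,1$. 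So it is enough to show: for $\Pi$ in a suitable $U_\sff\subseteq D_N$ and any $k\in\Amp_\sff$, the K3 surface $\kd_\Pi$ of period $\Pi$ admits a marking $\hdg$ under which $\bm$ is very ample and $n\bm-\bl$ is represented by a smooth curve $C$ of the projective type occurring in the Mori--Mukai list, so that $Y:=\mathrm{Bl}_CY_0$ lies in $\sff$, contains $\kd_\Pi$ as an anticanonical divisor, and has image of $H^2(Y)\to H^2(\kd_\Pi)\to L$ equal to $N$ with basis $\bl,\bm$; then every $k\in\Amp_\sff$, being in the interior of the nef cone of such a $Y$, restricts to a Kähler class on $\kd_\Pi$, which is exactly $\hdg^{-1}(k)$.

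For the construction, let $U_\sff$ consist of the $\Pi\in D_N$ for which none of the (locally finitely many) $(-2)$- and isotropic classes $v\in L\setminus\Lambda$ obstructing the steps below lies in $\Pic\kd_\Pi$; this exhibits $D_N\setminus U_\sff$ as a locally finite union of positive-codimension complex submanifolds, and for $\Pi\in D_\Lambda\cap U_\sff$ one has $\Pic\kd_\Pi=\Lambda$. Fix $\Pi\in U_\sff$. Composing a period-$\Pi$ marking with the Weyl group we may take $\bm$ nef in $\Pic\kd_\Pi\subseteq\Lambda$, since by \eqref{eq:hyp} no $(-2)$-class $\delta$ independent of $\bm$ has $\delta.\bm=0$ (that would give $(\delta.\bm)^2-\bm^2\delta^2=2\bm^2<\Delta$). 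By \eqref{eq:hyp} again no $v\in\Pic\kd_\Pi$ has $(v.\bm,v^2)\in\{(1,0),(2,0),(0,-2)\}$, since each such $v$ is independent of $\bm$ and gives $(v.\bm)^2-\bm^2v^2\leq 2\bm^2<\Delta$; hence Lemma~\ref{lem:embed} shows $\bm$ is very ample, embedding $\kd_\Pi$ as a degree-$\bm^2$ surface $S\subset\PP^{\bm^2/2+1}$, lying (for \#17) on a quadric $Y_0=Q$ that we arrange to be smooth by shrinking $U_\sff$. Now identify $C$ in the class $n\bm-\bl$. For \#27 this class is a $(-2)$-class of positive $\bm$-degree, hence effective, and it cannot split off two or more effective $(-2)$-curves, since each of those would have $\bm$-degree $\geq 3$ by \eqref{eq:hyp} whereas their sum is $3$; so it is a single smooth rational curve, i.e.\ a twisted cubic in $\PP^3$. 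For \#9 the class $3\bm-\bl$ is base-point free for $\Pi\in U_\sff$ and its general member is a smooth irreducible curve of genus $5$, degree $7$, cut out (by a Riemann--Roch count) by a net of cubics; for \#17 the class $2\bm-\bl$ is a primitive nef isotropic class, defining an elliptic pencil whose general fibre is a smooth degree-$5$ genus-$1$ curve on $Q$. In each case $\mathrm{Bl}_CY_0$ is a Fano 3-fold of the corresponding Mori--Mukai type, with $\kd_\Pi$ an anticanonical divisor and image $N$ with basis $\bl,\bm$, completing the verification.

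The step I expect to be the main obstacle is this last geometric input for \#9 and \#17: proving that the smooth curve $C$ representing $n\bm-\bl$ really has the projective type named in the Mori--Mukai list (so that the blow-up is in the correct deformation family and is genuinely Fano), and determining exactly which periods must be discarded --- those where the linear system acquires a base point, where the quadric through $S$ degenerates, or where $C$ specialises in its Hilbert scheme --- while keeping the discarded set a locally finite union of positive-codimension analytic subsets of $D_N$. This is precisely the ``handcrafting'' of \S\ref{subsec:match}: the hypothesis \eqref{eq:hyp} together with Lemma~\ref{lem:embed} disposes of the K3-lattice side uniformly, but connecting a general $\Lambda$-polarised anticanonical K3 surface to the Fano family has to be carried out case by case, and is classical only for \#27, since all twisted cubics in $\PP^3$ are projectively equivalent.
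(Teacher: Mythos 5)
Your overall strategy is the same as the paper's: reduce to showing that a K3 surface whose Picard group is (essentially) $\Lambda$ embeds anticanonically in a blow-up of $\PP^3$ resp.\ $Q$, use \eqref{eq:hyp} together with Lemma \ref{lem:embed} to make $\bm$ very ample, represent $n\bm-\bl$ by a smooth curve of the right projective type, and blow up; your treatment of \#27 is essentially the paper's argument. The genuine gap is in \#9 and \#17, and you have flagged it yourself: you delegate the decisive facts (base-point-freeness and smoothness of the general member of $|3\bm-\bl|$; smoothness of the quadric through the sextic model; the elliptic pencil $|2\bm-\bl|$) to a further shrinking of $U_\sff$, i.e.\ to discarding bad periods. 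That mechanism cannot close the gap. The periods that must eventually be fed into Proposition \ref{prop:match} are orthogonal to all of $\Lambda$, so every class of $\Lambda$ is algebraic on every K3 you are allowed to use; an obstruction realised by a class \emph{inside} $\Lambda$ (a monogonal decomposition of $3\bm-\bl$, a plane cubic class on the sextic model, a splitting of $2\bm-\bl$) is therefore present on all of them and cannot be removed by deleting a positive-codimension subset. Moreover, conditions like ``the quadric through $S$ is singular'' or ``$C$ is not cut out by cubics'' are not a priori locally finite, positive-codimension analytic conditions on the period: to discard them you would first have to show they are detected by algebraic classes, which is exactly the missing argument. The paper closes all of these by short lattice computations from \eqref{eq:hyp}: for \#9, $3\bm-\bl=\bm+\Gamma$ is ample by Nakai--Moishezon because \eqref{eq:deg_bound} forbids irreducible classes of degree $\leq 2$, and a base point would force a decomposition $kE+\Gamma'$ with $E$ isotropic, impossible since \eqref{eq:hyp} gives $E.\bm\geq 5$ while $(3\bm-\bl).\bm=7$; for \#17, a singular quadric would contain a plane whose section of $\kd$ is a plane cubic, i.e.\ a class with $v^2=0$, $v.\bm=3$, contradicting \eqref{eq:deg_bound2}, and $2\bm-\bl$ is irreducible because its degree $5$ cannot split into degrees $\geq 4$, hence nef and base point free by \cite[3.8]{reid97}.

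Two smaller remarks. First, you follow the literal wording of Definition \ref{def:generic} with $U_\sff\subseteq D_N$; read that way the notion would not depend on $\Lambda$ at all and the lemma would follow trivially from Proposition \ref{prop:generic}. The content is about $\Lambda$-polarised periods (the paper's proof reduces to $\Pic\kd=\Lambda$ exactly), which is precisely why the in-$\Lambda$ obstructions above are the real issue and why \eqref{eq:hyp} is the hypothesis that must carry the load. Second, the assertion that blowing up the resulting $(7,5)$-curve lands in family \#9 (i.e.\ that $C$ is cut out by cubics, equivalently $-K_Y$ is ample) is stated in the paper at the same level of detail as in your sketch, so that particular point is not a defect relative to the paper's own proof; but the bpf/smooth-quadric/nefness steps above are carried out there and are what your proposal is missing.
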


\begin{proof}
It is enough to prove that any K3 surface $\kd$ with Picard lattice
isometric to \emph{precisely}~$\Lambda$
has a marking $h : H^2(\kd) \to L$ mapping $\Pic(\kd) \to \Lambda$
and an embedding $\kd \into Y$ of $\kd$ as an anticanonical divisor
in some $Y \in \sff$, such that the image of the ample cone of $Y$ in
$H^2(\kd)$ is $h^{-1}(\Amp_\sff)$.
\begin{enumerate}[itemindent=11mm,labelsep=3mm,leftmargin=0mm,itemsep=2mm]
\item[\#27]
This is the blow-up of $\PP^3$ in a twisted cubic.
In this case the Picard lattice is $\sm{2 & 5 \\ 5 & 4}$, so $H$
is a class of degree 4. \eqref{eq:hyp} means that if $v \in \Lambda$ %
is linearly independent of $H$ and $d := v.H$ then
\begin{equation}
\label{eq:deg_bound}
d^2 - 4v^2 \geq 17,
\end{equation}
so neither of the cases in Lemma \ref{lem:embed} can occur.
Since $H$ is not orthogonal to any $(-2)$-class in~$\Lambda$, we can choose a
marking of $\kd$ that maps $\Pic(\kd)$ to $\Lambda$, such that $\bm$ is the
image of a nef class. Lemma \ref{lem:embed} then implies that $\bm$ corresponds
to a very ample class, embedding $\kd$ as a quartic in~$\PP^3$.

The class $2\bm - \bl$ has square $-2$, so by a standard application of the
Riemann-Roch theorem for surfaces either $2\bm-\bl$ or $-(2\bm-\bl)$
is effective \cite[Corollary 3.7(i)]{reid97}. Since $\bm.(2\bm-\bl) = 3$ is
positive, it must be $2\bm-\bl$ that is effective.
Since any irreducible effective class $v$ has $v^2 \geq -2$,
\eqref{eq:deg_bound} implies that in fact there are no such classes of degree
$d \leq 2$.
Therefore $2\bm-\bl$ is irreducible.
A further well-known application of Riemann-Roch
(see discussion after \mbox{\cite[Corollary 3.7]{reid97}})
implies that $2\bm-\bl$ is represented by a smooth rational curve $\Gamma$.
Its image in $\PP^3$ has degree~3, so must be a twisted cubic.
By blowing up $\Gamma$ we obtain a Fano $Y \in \sff$.

\item[\#9]
This is the family of blow-ups of $\PP^3$ in smooth curves of
degree 7 and genus 5.
The Picard lattice is $\sm{2 & 5 \\ 5 & 4}$ in this case too,
so we have already proved that $\bm$ embeds $\kd$ as a quartic K3 in $\PP^3$,
and that $2\bm-\bl$ is irreducible and represented by a twisted cubic $\Gamma$.

We now want to prove that $3\bm-\bl = \bm + [\Gamma]$ can be represented by a
smooth curve $C$. %
Because $3\bm -\bl$ is big and nef, by \cite[Theorem 3.8(d) \& 3.15]{reid97}
the only way it can fail to be basepoint-free is if it is monogonal,
\ie if $3\bm-\bl = aE + F$ where $E^2 = 0$ and $F$ is the fixed part.
Since $\bm$ is very ample certainly $\bm + [\Gamma]$ cannot have any
non-trivial fixed part other than $\Gamma$, and since $\bm^2 \not= 0$ we cannot
have $F = \Gamma$ either.

Hence $|3\bm-\bl|$ is basepoint-free, and a general member $C$ is
smooth by Bertini's theorem.
Now $C$ has degree 7 and genus 5, so by Blanc-Lamy \cite[Theorem 1.1]{blanc12}
blowing up $C$ defines a Fano in the family~$\sff$ unless there is a
4-secant line $\ell$ to $C$.

It thus remains only to show that no such $\ell$ exists.
If it did, then restricting the pencil of hyperplanes containing $\ell$
to $C$ defines a pencil of degree 3, a ``complete $g^1_3$'' in the terminology
of Brill-Noether theory.
The curve $C$ therefore has Clifford index 1. By the main theorem of
Green-Lazarsfeld \cite{green87} this implies
the existence of a line bundle $L \in \Lambda$ with $L^2 = 0$ and $L.C = 3$.
Now \eqref{eq:deg_bound} implies $H.L \geq 5$.
But then the span of $H+L$ and $3H-G$ is positive-definite, which is
impossible.

\item[\#17]
This is the family of blow-ups of smooth quadrics $Q \subset \PP^4$ in
smooth elliptic curves of degree~5. 
In this case the Picard lattice is $\sm{4 & 7 \\ 7 & 6}$, so $H$ is
a degree 6 class. \eqref{eq:hyp} means that
\begin{equation}
\label{eq:deg_bound2}
d^2 - 6v^2 \geq 25
\end{equation}
for any $v \in \Lambda$ not a multiple of $H$. In particular
Lemma \ref{lem:embed} implies that $\bm$ is very ample,
and by Riemann-Roch the embedded image of $\kd$ in $\PP^4$ is the intersection
of a quadric $Q$ and a cubic.
The quadric must be smooth, for if $Q$ is singular then it contains planes.
The section of $\kd$ by such a plane would be a plane cubic, defining a class
with $v^2 = 0$ and $d = 3$, which is ruled out by \eqref{eq:deg_bound2}
(see Fukuoka \cite[Lemma 2.4]{fukuoka17}).

Now consider the class $E := 2\bm- \bl$, which has degree 5 and $E^2 = 0$.
\eqref{eq:deg_bound2} rules out the existence of irreducible classes
in $\Pic \kd$ with $d \leq 3$, so $E$ is irreducible. In particular $E$ does
not have any $(-2)$-curve components, so $E$ is nef.
Therefore \cite[Theorem 3.8(b)]{reid97} implies that $|E|$ is basepoint-free.
A generic $C \in |E|$ is therefore a smooth elliptic curve of degree 5.

If $C$ had a 4-secant line $\ell$, then the pencil of planes through $\ell$
would define a morphism from $C$ to $\PP^1$, which is impossible.
Therefore by Blanc-Lamy \cite[Theorem 1.1]{blanc12},
blowing up $Q$ in $C$ yields a member $Y \in \sff$.
\qedhere
\end{enumerate}
\end{proof}

\noindent
For the three families of blocks under consideration, the sufficient conditions
provided by Lemmas \ref{lem:lambda} and \ref{lem:generic} turn out to be good
enough to prove the existence of skew matching between any pair of families.
Moreover, an ad hoc argument rules out any configurations beyond the ones
realised.

\begin{thm}
\label{thm:nonzero}
Let $\sff_+$ and $\sff_-$ be a pair of deformation types of rank 2 Fanos
among \#9, \#17 and \#27 on the Mori-Mukai list.
Let $N_\pm$ be their Picard lattices, and let $A_\pm \in N_\pm$ be the ample
class listed in Table \ref{table:blocks}.
Define $W_1$ as in Lemma \ref{lem:handarith}, embed $W_1 \subset L$
primitively, and consider the resulting configuration $N_+, N_- \subset L$. 
\begin{enumerate}
\item There is a matching of some elements of $\sff_\pm$ with that
configuration.
\item This is the only non-perpendicular configuration of $N_+$ and $N_-$
for which a matching exists.
\end{enumerate}
\end{thm}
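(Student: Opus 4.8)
The plan is to prove (i) via the genericity machinery of \S\ref{subsec:match} and (ii) by a case analysis on the rank of $W := N_+ + N_-$.

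For (i), write $W_1 \subset L$ for the chosen primitive embedding. Since $W_1$ is primitive in $L$, the lattice $W_\pm$ is the orthogonal complement of $N_\mp$ inside $N_\pm$, and by the construction of $W_1$ (in which $A_\pm \perp N_\mp$ and $A_\pm$ is part of a basis of $N_\pm$) this is exactly $\gen{A_\pm}$; as $A_\pm$ is the ample class $\bl+\bm$ of Table~\ref{table:blocks}, we get $W_\pm \cap \Amp_{\sff_\pm} \neq \emptyset$ for free. Primitivity of $W_1$ also identifies $\Lambda_\pm$ with the orthogonal complement of $A_\mp$ in $W_1$. The only remaining hypothesis of Proposition~\ref{prop:match} is that $\sff_\pm$ is $(\Lambda_\pm,\Amp_{\sff_\pm})$-generic; by Lemma~\ref{lem:generic} it suffices to verify the bound \eqref{eq:hyp} for $\Lambda_\pm$, and by Lemma~\ref{lem:lambda} that in turn reduces to the numerical inequality \eqref{eq:discr_rel}. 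So first I would substitute the data of Table~\ref{table:blocks} into \eqref{eq:discr_rel} for each of the six pairs drawn from \#9, \#17 and \#27 (here $\Delta \in \{17,25\}$, $A^2 \in \{16,24\}$, $B^2 \in \{-272,-600\}$, and $H^2 = \bm^2 \in \{4,6\}$); because $\Delta/A^2$ is only marginally larger than $1$ while $-B^2$ is large, the left side exceeds the right by a comfortable margin in every case. Proposition~\ref{prop:match} then produces the asserted matching.

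For (ii), let $N_+, N_- \subset L$ be any non-perpendicular configuration of the two rank-$2$ Picard lattices that admits a matching, and set $W := N_+ + N_-$, of rank $2$, $3$ or $4$. If $\rk W = 2$ then $N_+ = N_-$ as sublattices, so $W_\pm = T_\mp \cap N_\pm = 0$, contradicting Remark~\ref{rmk:arith}\ref{it:ample}. If $\rk W = 3$ then $N_+ \cap N_-$ has rank $1$; but Theorem~\ref{thm:orth} enumerates all $18$ pairs of rank-$2$ Fanos admitting a matching with $N_+ \cap N_-$ of rank $1$, and none of them involves \#9, \#17 or \#27. Hence $\rk W = 4$, i.e. $N_+ \cap N_- = 0$; non-perpendicularity forces $W_\pm = T_\mp \cap N_\pm$ to have rank exactly $1$ (rank $\geq 1$ by Remark~\ref{rmk:arith}\ref{it:ample}, and rank $2$ would mean $N_\pm \subseteq N_\mp^\perp$, i.e. a perpendicular configuration), necessarily spanned by an ample class $A_\pm$, so $W \cong W_k$ in the notation of Lemma~\ref{lem:handarith}. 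By Remark~\ref{rmk:arith}(i) the form on $W$ has signature $(2,2)$, so Lemma~\ref{lem:handarith} gives $k^2 A_+^2 A_-^2 < \Delta_+\Delta_-$. Inspecting the ample classes of \#9, \#17, \#27 in Table~\ref{table:blocks} shows this is incompatible with $A_\pm^2 \geq 2\Delta_\pm$, so each $A_\pm$ must be the tabulated class $\bl+\bm$; and since $\Delta/A^2 < 2$ for these three Fanos it also forces $k = 1$. Thus $W \cong W_1$ with the $A_\pm$ of the statement, and Nikulin's uniqueness criterion shows the primitive embedding of $W_1$ in $L$ is unique up to $O(L)$ — which is precisely the configuration of part (i). This proves (ii).

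I expect essentially all the genuine content to sit in part (i), and even there the hard geometry (that a K3 surface with Picard lattice \emph{exactly} $\Lambda_\pm$ occurs as an anticanonical divisor in some member of $\sff_\pm$) has already been isolated in Lemmas~\ref{lem:embed} and \ref{lem:generic}; what is left is the bookkeeping of \eqref{eq:discr_rel} and the scan of Table~\ref{table:blocks} confirming that among \#9, \#17 and \#27 no ample class other than $\bl+\bm$ can enter a skew matching. The one point that must be handled carefully is the identification $\Lambda_\pm = A_\mp^{\perp} \cap W_1$, valid only because $W_1$ is embedded \emph{primitively}; so the Nikulin embedding has to be invoked before $\Lambda_\pm$ — and hence the hypothesis of Lemma~\ref{lem:lambda} — is even well defined.
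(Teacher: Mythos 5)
Your part (i) follows the paper's route exactly (Lemma \ref{lem:lambda} verifying \eqref{eq:discr_rel} from the table data, then Lemma \ref{lem:generic} and Proposition \ref{prop:match}), and your reduction in part (ii) to the skew case with $k=1$ and $A_\pm$ the tabulated classes is also in line with the paper. But there is a genuine gap in part (ii): after concluding that $W := N_+ + N_-$ is abstractly isometric to $W_1$, you jump to ``the primitive embedding of $W_1$ in $L$ is unique up to $O(L)$'' and declare the configuration to be that of part (i). This ignores configurations in which $W_1$ is embedded \emph{non-primitively} in $L$, i.e.\ where the saturation of $W$ in $L$ is a proper overlattice. Such configurations are genuinely different (they change $\Lambda_\pm$, hence the matching problem), and the statement (ii) requires ruling them out. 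The paper does this explicitly: it computes the discriminant $D = \Delta_+\Delta_- - A_+^2A_-^2$ of $W_1$, finds $D=33$ or $41$ (square-free, so no overlattice) for the pairs drawn from \{\#9,\#27\} or mixing \#17 with \#9/\#27, but $D=49$ for the pair \#17--\#17, where $W_1$ does admit an index-$7$ (in fact unimodular) overlattice $\wt W$. That candidate configuration cannot be dismissed by arithmetic alone; the paper excludes it by a geometric argument: with the enlarged $\Lambda_\pm$, the bound \eqref{eq:deg_bound2} fails for a square-zero class of degree $3$, which by the argument of Lemma \ref{lem:generic} is represented by a smooth plane cubic on the K3, forcing the quadric $Q \subset \PP^4$ containing it to contain a plane and hence be singular --- a contradiction. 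Without this step your proof of (ii) is incomplete precisely in the \#17--\#17 case.

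A smaller point: the uniqueness up to $O(L)$ of the primitive embedding of $W_1$ is invoked but not justified (and is not where the difficulty lies); the substantive issue for (ii) is the non-primitive possibility above, which requires the discriminant computation and the handcrafted geometric exclusion.
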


\pagebreak[3]

\begin{proof}%
\begin{enumerate}[itemindent=8mm,labelsep=3mm,leftmargin=0mm,itemsep=2mm]
\item 
For this configuration, the lattice $\Lambda_\pm \subset L$ defined in
\S \ref{subsec:config} is the orthogonal complement of $A_\mp$ in $W_1$.
Let $G_\pm, H_\pm \in N_\pm$ be the basis vectors
described in \S \ref{subsec:MM}. Looking up the values of $H_\pm^2$, $A_\pm^2$,
$B_\pm^2$ and $\Delta_\pm$ in Table \ref{table:blocks}, Lemma \ref{lem:lambda}
implies that
\begin{equation}
\label{eq:discr_pm}
(v.H_\pm)^2 - H_\pm^2v^2 \geq \Delta_\pm
\end{equation}
for any $v \in \Lambda_\pm$ linearly independent of $H_\pm$.
Therefore Lemma \ref{lem:generic} implies that $\sff_\pm$ is
$(\Lambda_\pm, \Amp_{\sff_\pm})$-generic, and the desired matching exists by
Proposition \ref{prop:match}.

\item
By Lemma \ref{lem:orth} there can be no matchings of these types with
a configuration such that $N_+ \cap N_-$ is non-trivial, so a 
non-perpendicular configuration must be skew. We have explained that for a skew
configuration to satisfy the conditions of Remark \ref{rmk:arith}, $N_+ \oplus
N_-$ must be isometric to~$W_1$. Thus it only remains to rule out %
configurations where $W_1$ is embedded non-primitively in~$L$.

In Lemma \ref{lem:handarith} we computed that the discriminant of $W_1$ is
\[ D = \Delta_+\Delta_- - A_+^2 A_-^2 . \]
When $\sff_\pm$ are both among \#9 and \#27, $D = 33$ is square-free, so
$W_1$ does not have any integral overlattice.
When $\sff_+$ is one of \#9 and
\#27 while $\sff_-$ is \#17 we get $D = 41$, which is also square-free.
Hence there are no non-primitive embeddings $W_1 \subset L$ in these cases.

However, when $\sff_\pm$ are both \#17 we get $D = 49$.
In the basis $G_+, A_+, G_-, A_-$,
the quadratic form on $W_1$ can be written as
\vspace{-0.5\baselineskip}
\[ \begin{pmatrix}
4 & 11 & 1 & 0 \\ 11 & 24 & 0 & 0 \\ 1 & 0 & 4 & 11 \\ 0 & 0 & 11 & 24
\end{pmatrix} . \]
This matrix has rank 3 over $\Z/7$, so
the discriminant group must be $\Z/49$ rather than $(\Z/7)^2$,
and $W_1$ has a index 7 overlattice $\wt W$ (which is in fact unimodular).
Indeed, $K := G_+ + A_+ - G_- - A_-$ has $K^2 = 98$ and its product with
any element of $W$ is divisible by 7. Therefore we can define $\wt W$ by
adjoining $\frac{1}{7}K$ to $W$.

The only possible
way to embed $W_1 \subset L$ non-primitively is via
a primitive embedding $\wt W \subset L$. We now check that there are
no matchings with this configuration.
Note that $\Lambda_\pm$ is spanned by
$G_\pm$, $H_\pm$ and $\wt B_\pm := \pm \frac{24}{7} K + 5A_\mp$.
In that basis, the quadratic form on $\Lambda_\pm$ is represented by
\vspace{-0.5\baselineskip}
\[ \begin{pmatrix}
4 & 7 & 48 \\ 7 & 6 & 72 \\ 48 & 72 & 552
\end{pmatrix} . \]
We find that \eqref{eq:deg_bound2} fails for some $v \in \Lambda_+$ such that
$v^2 = 0$ and $d = 3$, \eg $v = E' := - 9G_+ - H_+ + \wt B_+$.

Now suppose that $\kd$ is an anticanonical K3 divisor in some rank 2 Fano of
type \#27, with $\Pic \kd$ isometric to this $\Lambda_+$.
Then $H_+$ embeds $\kd$ in a smooth quadric $Q \subset \PP^4$.
The argument from the proof of Lemma \ref{lem:generic} for the case \#17
shows that because $(E')^2 = 0$ and $E'.H_+ = 3$, the linear system $|E'|$
on $\kd$ is basepoint-free, and represented by a smooth elliptic curve $C$. By
Riemann-Roch, $C$ is a plane cubic. Because $Q$ contains $C$ it must also
contain the plane of $C$, contradicting that $Q$ is non-singular.
\qedhere
\end{enumerate}
\end{proof}

\subsection{Proof of main theorem}

To prove Theorem \ref{thm:main} it now remains only to put together the pieces
provided above.

Theorem \ref{thm:nonzero} provides exactly one configuration with a matching
for each pair of rank 2 Fano types among \#9, \#17 and \#27 (referred to
as \ref{it:mm9}, \ref{it:mm17} and \ref{it:mm27} in Table \ref{table:mainex} in
the introduction).
Each of those six pairs produces a closed 7-manifold $M$ with holonomy $G_2$ by
Construction \ref{constr:tcs}, whose topology can be computed
from Proposition \ref{prop:skew_top} and the data in Table \ref{table:blocks}.
These $M$ are 2-connected with $H^4(M)$ torsion-free, and $b_3(M)$ and $d(M)$
as listed in Table \ref{table:mainex}. By design $\EK(M) = 1$, while
all other matchings of rank 2 Fanos give $\EK = 0$ by Corollary \ref{cor:perp0},
Theorem \ref{thm:orth} and Remark~\ref{rmk:candidates}.

The six matchings realise four different pairs $(b_3,d)$,
and hence four different diffeomorphism types by Theorem \ref{thm:2c7m}(ii).
We can then consult Table \ref{table:perp} to see that for two of
these four smooth manifolds, there exist perpendicular twisted connected
sums $M'$ with the same $(b_3, d)$.
Then $M$ and $M'$ are homeo\-morphic by Theorem \ref{thm:2c7m}(i).
However, $\EK(M') = 0$ by Corollary \ref{cor:perp0}, so $M$ and $M'$ are not
diffeomorphic.

\bibliographystyle{amsinitial}
\bibliography{g2geom}
\enlargethispage{0.5\baselineskip}

\end{document}